\renewcommand{\nu}{\vec{\mathbf{n}}}
\newcommand{\HH}{\mathcal{H}}
\newcommand{\scal}[2]{{\left\langle{{#1}\mid{#2}}\right\rangle}}
\newcommand{\menge}[2]{\big\{{#1}~\big |~{#2}\big\}} 
\newcommand{\Id}{\ensuremath{\operatorname{Id}}}
\newcommand{\minimize}[2]{\ensuremath{\underset{\substack{{#1}}}%
{\text{\rm minimize}}\;\;#2}}
\newcommand{\prox}{{\rm prox}}
\newcommand{\RR}{\ensuremath{\mathbb{R}}}
\newcommand{\RP}{\ensuremath{\left[0,+\infty\right[}}
\newcommand{\RPP}{\ensuremath{\left]0,+\infty\right[}}
\newcommand{\RX}{\ensuremath{\left]-\infty,+\infty\right]}}
\newcommand{\Fix}{\ensuremath{\text{\rm Fix}\,}}
\newcommand{\NN}{\mathbb{N}}
\newtheorem{proposition}{Proposition}
\newtheorem{problem}{Problem}
\newtheorem{remark}{Remark}
\newtheorem{example}{Example}
\newtheorem{lemma}{Lemma}
	\def\input@path{%
		{figures/},%
	}
\title{Theoretical and numerical comparison of first order algorithms for 
cocoercive equations and smooth convex optimization}
\author{Luis M. Brice\~no-Arias\thanks{L. M. Brice\~no-Arias, Department of Mathematics, 
Universidad T\'ecnica Federico Santa Mar\'ia,
 Santiago, Chile.} \,and Nelly Pustelnik
\thanks{N. Pustelnik is with Univ Lyon, Ens de Lyon, Univ Lyon 1, CNRS, Laboratoire de Physique, Lyon, 69342, France.}}
\begin{document}
\maketitle

\begin{abstract}
This paper provides a theoretical and numerical comparison of 
classical {first-order} splitting methods for solving smooth convex 
optimization problems and cocoercive equations. {From}  a 
theoretical point 
of view, we compare convergence rates of gradient descent, 
forward-backward, Peaceman-Rachford, and Douglas-Rachford 
algorithms for minimizing the sum of two smooth convex functions 
when one of them is strongly convex. A similar comparison is given in 
the more general cocoercive setting under the presence of strong 
monotonicity and we observe that the convergence rates in 
optimization are strictly better than the corresponding rates for 
cocoercive equations for some algorithms.
We obtain improved rates with respect to the literature in several 
instances {by}  exploiting the structure of our problems. 
Moreover, we  
indicate { which algorithm has the lowest convergence rate 
depending on 
strong 
convexity and cocoercive parameters.}
{From}  a numerical point of view, we verify our theoretical results 
by implementing and comparing previous algorithms in well 
established signal and image inverse problems involving sparsity. 
We replace the widely used $\ell_1$ norm {with}  the Huber loss 
and we observe that 
fully proximal-based strategies have numerical and theoretical 
advantages with respect to methods using gradient steps. 
\footnote{This work is supported by Agencia Nacional de 
Investigaci\'on 
y Desarrollo (ANID) from Chile, under grant FONDECYT 1190871, by 
Centro de
Modelamiento Matem\'atico (CMM), ACE210010 and FB210005, 
BASAL funds for centers of excellence, and also by the ANR (Agence 
Nationale de la Recherche) from France
ANR-19-CE48-0009 Multisc'In. The authors acknowledge the support 
from 
LIA-MSD, CNRS-France.}
\end{abstract}

\textbf{Keywords: }
Proximal algorithms, convergence rates, cocoercive equations, 
smooth convex optimization, Huber loss, sparse inverse problems.

\section{Introduction}
\label{sec:intro}

The resolution of many signal processing problems relies on the 
minimization of a sum of {data-fidelity term and penalization}. 
%a data-fidelity term and a penalization. 
This formulation can be encountered either in standard variational 
strategies~\cite{Pustelnik_N_20016_j-w-enc-eee_wav_bid}, mainly 
used in the past {60 years}, or into more recent deep-learning 
framework~\cite{Fessler_JA_2020_j-ieee-spm}.  

Formally, the associated
optimization problem writes
\begin{equation}
\label{e:proboptimintro}
%\min_{x\in\HH}f(x)+g(x),
\underset{x\in\HH}{\textrm{minimize}}\,f(x)+g(x),
\end{equation}
where $\HH$ denotes a real Hilbert space, and $f\colon\HH\to]-\infty,+\infty]$
and $g\colon\HH\to]-\infty,+\infty]$ are very often considered as 
proper lower semicontinuous convex functions.

%In order to incorporate the knowledge coming from the physics of the 
%acquisition device  into the minimization  as well as  providing an 
%adapted prior to model complex data, the functions $f$ and $g$ often 
%involved linear operators whose processing require a particular 
%attention. Additionnaly, the optimization problem may rely on a sum of 
%more than two functions letting the possibility to handle with several 
%prior or hard constraints (e.g. positivity, epigraph constraints). 
{For} %Since 
almost twenty years, a large panel of efficient first-order 
algorithms {has} %have 
been 
derived in order to solve \eqref{e:proboptimintro} under different
assumptions on functions $f$ and $g$ (see 
\cite{bauschke2011:Convex_analysis,Combettes_P_2007_inbook,
parikh2014proximal}  
for an exhaustive list). 
From stronger to weaker assumptions, the gradient method 
\cite{Cauchy,Haskell} is 
implementable if 
$f$ and $g$ are smooth, forward-backward splitting (FBS)
\cite{Combettes_P_2005_j-mms_sig_rpf,Mercier80} can 
be applied when 
either $f$ or $g$ is smooth, while Peaceman-Rachford 
splitting (PRS) \cite{LionsMercier} and  
Douglas-Rachford 
splitting (DRS) \cite{LionsMercier,Eckstein_J_1992_j-mp_dr, 
Combettes_P_2007_j-ieee-jstsp_dr} are applicable without any 
smoothness assumption. When a function is not smooth,
FBS, PRS, DRS use proximal (implicit) steps for the function, which 
amounts to solve a non-linear equation. Since solving a non-linear 
equation at each iteration can be computationally costly, a common 
practice is to choose gradient steps when the function is smooth.
However, nowadays there exists a wide class of functions 
whose proximal steps are explicit or easy to 
compute\footnote{See, e.g., http://proximity-operator.net/} and 
activating $f$ and $g$ via proximal steps can be advantageous 
numerically
\cite{combettesGlaudin}.
In this context, it becomes important to provide a theoretical 
comparison of algorithmic schemes involving gradient and/or proximal 
steps for solving \eqref{e:proboptimintro} and to identify which
algorithm is the most efficient depending on
the properties of $f$ and $g$. We focus our analysis on 
{first-order} methods when $f$ and $g$ are smooth and proximal 
steps 
of both functions are easy to compute. The theoretical 
analysis of several first-order methods in this context provides 
interesting insights of the structural properties of first-order 
algorithms to be considered in more general frameworks.

From the signal processing {user's} point of view, the choice of 
the most 
efficient algorithm for a specific data processing problem with the 
 form of \eqref{e:proboptimintro} is a complicated task.  
In order to tackle this problem,
the convergence rate is {a} %an 
useful tool in order 
to provide a theoretical comparison among algorithms. 
However, 
the theoretical
behavior of an algorithmic scheme may differ considerably from its 
numerical efficiency, which enlightens the importance of obtaining 
sharp convergence rates exploiting the properties of $f$ and $g$. 
In this context, sharp linear convergence rates can be obtained for 
several splitting algorithms under strong convexity of $f$ and/or $g$
\cite{Taylor18,BoydGiss,Davis17,Yin20,Ryu20}, which can be 
extended when the strong convexity is satisfied on
particular manifolds in the case of partly smooth functions 
\cite{Lewis02,Fadi17}. Moreover, sub-linear convergence rates of 
some {first-order} methods
depending on the {Kurdyka-Lojasiewicz (KL)-exponent} are 
obtained in \cite{AttouchKL} when $f+g$ is a KL-function (see 
\cite{BolteKL}). 
Since KL-exponents 
are usually difficult to compute \cite{PeyKL}, we focus on global strong 
convexity 
assumptions when we aim at finding linear convergence rates.

{The previous discussion devoted to linear convergence rate for 
optimization problems} also holds in the context of monotone 
operators, which appear naturally from primal-dual {first-order} 
optimality conditions 
of optimization problems involving linear operators (see, 
e.g., 
\cite{MS,He_Yuan12,Condat_L_2013_jota_pdhg,Vu_B_2013_j-acm_pdhg}).
We generalize our study of splitting algorithms 
involving implicit and/or 
explicit steps in the context of cocoercive equations. In the presence 
of strong {monotonicity,} we compare linear convergence rates 
of the methods in this context.

\vspace{.3cm}
\noindent \textbf{Contributions} --
In the case when $f$ is strongly convex, we compare 
the Lipschitz-continuous constants of the operators governing 
the gradient method, FBS, PRS, and DRS, which leads to a 
comparison of their linear convergence 
rates. This gives a theoretical support to the results obtained in 
\cite{combettesGlaudin} for the strongly convex case. In the context of 
strongly monotone cocoercive equations,
we provide the linear convergence rates of the four algorithms under 
study, which are larger than the rates in the optimization context.
We also provide an improved 
convergence rate for DRS inspired {by} 
\cite{Giselsson2017,BoydGiss}, 
which exploits the fully smooth context, which is replicated in the 
cocoercive setting. In addition, we obtain a lower convergence 
rate for {the} gradient method in the strongly monotone and 
cocoercive 
setting inspired {by} \cite{Giselsson2017}. 

%Based on the obtained 
%convergence rates, we provide efficiency regions of strong 
%convexity 
%and Lipschitz parameters of $f$ and $g$ identifying the most 
%efficient 
%algorithm.

Based on the obtained 
convergence rates, a second contribution provides efficiency regions of strong 
convexity 
and Lipschitz parameters of $f$ and $g$ identifying the most 
efficient 
algorithm.

%In this framework, we improve
%several convergence rates in the literature including 
%\cite{Mercier80,ChenRock,Tseng91,LionsMercier,Ryu20}.

A third contribution is to provide several experiments comparing the 
theoretical rates and the numerical behavior of the four methods 
under study in the presence of high and low strong convexity 
parameters.
We obtain that proximal-based schemes PRS and DRS are 
more efficient than EA and FBS in the context of 
piecewise constant denoising and image restoration.

\vspace{.3cm}
\noindent \textbf{Outline} -- In Section~\ref{sec:prelim} we provide the 
results and concepts needed throughout the paper and the
state-of-the-art on convergence properties of the algorithms under 
study. In Section~\ref{sec:coco}, we provide and compare the 
Lipschitz continuous
constants of the operators governing the methods under study
in the cocoercive-strongly monotone setting and our results are 
refined
in Section~\ref{sec:opti} for the particular smooth strongly convex 
optimization context. We also provide efficiency regions 
depending on 
the parameters of the problem identifying the most efficient 
algorithm.
We finish with numerical experiments in 
Section~\ref{sec:exp}.

\section{Preliminaries, problem, and state-of-the-art}
\label{sec:prelim}
{In this section}, we provide our notation, concepts, and results
needed on this paper split in
fixed point theory, 
monotone operator theory, convex analysis, and convergence of 
several algorithms. 
{Throughout this paper,} $\HH$ is a real Hilbert space endowed 
with the inner product $\scal{\cdot}{\cdot}$.
{A sequence $(x_k)_{k\in 
\NN}$ in $\HH$ converges weakly to $x \in \HH$ if,
for every 
$y\in \HH$, $\lim_{k\to +\infty} \langle y\,\vert\, x_k - 
{x}\rangle = 0$, it converges strongly if 
$\lim_{k\to +\infty} \Vert x_k - {x}\Vert = 0$, and it converges 
linearly at rate $\omega\in\left[0,1\right[$
if, for every $k\in\NN$, $\|x_{k}-{x}\|\le \omega^k\|x_{0}-{x}\|$.}

%\np{We first recall fixed point theory in the context of Lipschitz 
%continuous operators, second few elements of monotone 
%operator theory are provided as well as the link between 
%cocoercive, strong operator, and  Lipschitz continuous 
%operators, 
%third convex analysis elements and in particular the link 
%between 
%Fr\'echet differentiable function and cocoercive operators.}
\subsection{Fixed point theory}

An operator $\Phi\colon\HH\to\HH$ is 
$\omega$-Lipschitz 
continuous for some $\omega\in\RP$ if
\begin{equation}
	(\forall x\in\HH)(\forall y\in\HH)\quad 
	\|\Phi x-\Phi y\|\le \omega\|x-y\|,
\end{equation}
and $\Phi$ is nonexpansive if it is $1$-Lipschitz continuous.
The following convergence result, derived from 
\cite[Theorem~1.50]{bauschke2011:Convex_analysis},
is known as the Banach-Picard theorem and asserts 
the strong and linear convergence of iterations generated by 
repeatedly applying a $\omega$-Lipschitz continuous operator
when $\omega\in\left[0,1\right[$ {to a fixed point of $\Phi$, where 
the set of fixed points is $\Fix \Phi = \{x\in \HH \,\vert \, x =  \Phi 
x\}$.} 
\begin{proposition}
\label{p:convlinear}
Let $\omega\in\left[0,1\right[$, let $\Phi\colon\HH\to\HH$ be a 
$\omega$-Lipschitz continuous operator, and let $x_0\in\HH$. 
Set 
\begin{equation}
	(\forall k\in\NN)\quad x_{k+1}=\Phi x_k.
\end{equation}
Then, $\Fix \Phi=\{\widehat{x}\}$ for some $\widehat{x}\in\HH$ and we 
have
\begin{equation}
	(\forall k\in\NN)\quad \|x_{k}-\widehat{x}\|\le 
	\omega^k\|x_{0}-\widehat{x}\|.
\end{equation}
Moreover, $(x_k)_{k\in\NN}$ converges strongly to $\widehat{x}$ with 
linear convergence rate $\omega$.
\end{proposition}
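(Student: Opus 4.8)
The plan is to run the classical contraction-mapping argument, exploiting that $\HH$, being a Hilbert space, is complete. The rate and uniqueness will be one-line inductions once the fixed point is shown to exist, so the work concentrates on existence.

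First I would control the increments of the orbit. Since $\Phi$ is $\omega$-Lipschitz continuous and $x_{k+1}=\Phi x_k$, for every $k\in\NN$ we have $\|x_{k+1}-x_k\|=\|\Phi x_k-\Phi x_{k-1}\|\le\omega\|x_k-x_{k-1}\|$, and an immediate induction gives $\|x_{k+1}-x_k\|\le\omega^k\|x_1-x_0\|$. Summing the resulting geometric series, for any integers $m>k$ the triangle inequality yields $\|x_m-x_k\|\le\sum_{j=k}^{m-1}\omega^j\|x_1-x_0\|\le\frac{\omega^k}{1-\omega}\|x_1-x_0\|$. Because $\omega\in\left[0,1\right[$, the right-hand side depends only on the smaller index $k$ and tends to $0$ as $k\to+\infty$, so $(x_k)_{k\in\NN}$ is a Cauchy sequence.

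Next, completeness of $\HH$ guarantees that this Cauchy sequence converges strongly to some $\widehat{x}\in\HH$. To see that $\widehat{x}\in\Fix\Phi$ I would pass to the limit in the recursion: $\omega$-Lipschitz continuity makes $\Phi$ continuous, hence $\Phi\widehat{x}=\lim_k\Phi x_k=\lim_k x_{k+1}=\widehat{x}$. Uniqueness is then immediate, since if $x,y\in\Fix\Phi$ then $\|x-y\|=\|\Phi x-\Phi y\|\le\omega\|x-y\|$, whence $(1-\omega)\|x-y\|\le 0$ and, as $\omega<1$, $x=y$; thus $\Fix\Phi=\{\widehat{x}\}$. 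Finally, with the fixed point in hand, a second induction gives the rate: $\|x_{k+1}-\widehat{x}\|=\|\Phi x_k-\Phi\widehat{x}\|\le\omega\|x_k-\widehat{x}\|$, so $\|x_k-\widehat{x}\|\le\omega^k\|x_0-\widehat{x}\|$ for every $k$. Since $\omega^k\to 0$, this simultaneously yields strong convergence of $(x_k)_{k\in\NN}$ to $\widehat{x}$ and exhibits $\omega$ as a linear convergence rate in the sense defined in the preliminaries.

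The main obstacle is the existence half: the rate estimate is meaningless until $\widehat{x}$ is known to exist, and existence rests entirely on the Cauchy-plus-completeness step. By contrast, uniqueness and the rate bound are routine consequences of the contraction inequality applied to a fixed point.
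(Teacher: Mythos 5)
Your proof is correct and is exactly the classical Banach--Picard contraction argument (geometric bound on increments, Cauchy plus completeness for existence, continuity for fixedness, contraction inequality for uniqueness and the rate), which is precisely the argument behind the reference the paper cites for this proposition rather than proving it itself. Nothing is missing; the treatment of existence via completeness of $\HH$ is the essential step and you handle it correctly.
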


%In the case when $\omega=1$, Proposition~\ref{p:convlinear}
%is no longer valid, which is confirmed by the case when 
%$\Phi\colon\RR^2\to\RR^2$ is a rotation.

%An operator $\Phi\colon\HH\to\HH$ is $\mu$-averaged 
%nonexpansive for some
%$\mu\in\left]0,1\right]$ if, for every  $x\in\HH$ and $y\in\HH$,
%\small
%\begin{equation}
%	%(\forall x\in\HH)(\forall y\in\HH)\quad 
%	\|\Phi x-\Phi y\|^2\le
%	\|x-y\|^2-
%	\left(\dfrac{1-\mu}{\mu}\right)\|(\Id-\Phi)x-(\Id-\Phi)y\|^2,
%\end{equation}
%\normalsize
% and $\Phi$ is firmly nonexpansive if it is $1/2$-averaged. 
% Observe that $\Phi$ is nonexpansive if and only if $\Phi$
% is $1$-averaged.
%The following result derived from 
%\cite[Proposition~5.16]{bauschke2011:Convex_analysis} 
%guarantees the weak convergence of the sequence 
%generated by repeatedly applying a $\mu$-averaged 
%nonexpansive operator for some $\mu\in\left]0,1\right[$.
%\begin{proposition}
%	\label{p:convweak}
%	Let $\mu\in\left]0,1\right[$, let $\Phi\colon\HH\to\HH$ be a 
%	$\mu$-averaged nonexpansive operator such that $\Fix 
%	\Phi\ne\varnothing$, and let $x_0\in\HH$. Set 
%	\begin{equation}
%		(\forall k\in\NN)\quad x_{k+1}=\Phi x_k.
%	\end{equation}
%	Then $(x_k)_{k\in\NN}$ converges weakly to a point in $\Fix 
%	\Phi$.
%\end{proposition}

\subsection{Monotone operator theory}
For every set-valued operator $\mathcal{M}\colon\HH\to 2^{\HH}$,
${\rm 
gra}(\mathcal{M})=\menge{(x,u)\in\HH\times\HH}
{u\in\mathcal{M}x}$ is the
graph of $\mathcal{M}$, {$\mathcal{M}^{-1}\colon 
u\mapsto \menge{x\in\HH}{u\in \mathcal{M}x}$ is the inverse of 
$\mathcal{M}$,}
$\mathcal{M}$ is 
monotone if and only if it satisfies, for every $(x,u)$ and $(y,v)$ in 
${\rm gra}(\mathcal{M})$, $\scal{u-v}{x-y}\ge 0$,
and it is maximally monotone if {it is monotone and, for every 
monotone operator 
${\mathcal{T}}\colon\HH\to 2^{\HH}$, ${\rm 
gra}(\mathcal{M})$ is not properly contained in ${\rm 
gra}({\mathcal{T}})$.} {$\Id\colon \HH\to\HH$ 
stands 
for the identity operator.}
For every monotone operator $\mathcal{M}\colon 
\HH\to2^{\HH}$, 
$J_{\mathcal{M}}=(\Id+\mathcal{M})^{-1}$
is the resolvent of $\mathcal{M}$, which is single-valued.
In addition, if $\mathcal{M}$ is maximally monotone, then 
$J_{\mathcal{M}}$ is everywhere defined and 
nonexpansive 
\cite[Proposition~23.8]{bauschke2011:Convex_analysis}.

For every $\eta\in\RP$, we define the class  
$\mathcal{C}_{\eta}$ of $\eta$-cocoercive operators
$\mathcal{M}\colon \HH\to\HH$ satisfying, for every $x$ and $y$ in 
$\HH$,
\begin{equation}
\label{e:coco}
\scal{\mathcal{M}x-\mathcal{M}y}{x-y}
\ge\eta\|\mathcal{M}x-\mathcal{M}y\|^2.
\end{equation}
In particular, $\mathcal{C}_{0}$ is the class of single-valued 
monotone operators.  
Note that, if $\mathcal{M}\in\mathcal{C}_{\eta}$ for some 
$\eta>0$, then $\mathcal{M}$ is maximally monotone in view of
\cite[Corollary~20.28]{bauschke2011:Convex_analysis}.

An operator 
$\mathcal{M}\colon \HH\to\HH$ is $\rho$-strongly monotone for 
some $\rho\in\RPP$
if, for every $x$ and $y$ in $\HH$,
$\scal{\mathcal{M}x-\mathcal{M}y}{x-y}\ge\rho\|x-y\|^2$.

\subsection{Convex analysis}
We denote by $\Gamma_0(\HH)$ the class of functions
$h\colon \HH\to\RX$ which are proper, lower semicontinuous,
and convex. For every $h\in \Gamma_0(\HH)$, the maximally 
monotone operator
\begin{equation}
\partial h\colon x\mapsto \menge{u\in\HH}{(\forall y\in\HH)\: h(x)
+\scal{y-x}{u}\le h(y)}
\end{equation}
is the subdifferential of $h$ and
$\textrm{Argmin}_{x\in\HH}h(x)$ is the set of solutions to 
the problem of minimizing $h$ over $\HH$.
For every $h\in\Gamma_0(\HH)$, it follows from 
\cite[Proposition~17.4]{bauschke2011:Convex_analysis} that
$\widehat{x}\in\textrm{Argmin}_{x\in\HH}h(x)$ if and only if $0\in \partial 
h(\widehat{x})$ and
the proximity operator 
of $h$ is defined by
\begin{equation}
	\label{e:defprox}
	\prox_{h}\colon 
	x\mapsto\arg\min_{y\in\HH}\left(\!h(y)+\frac{1}{2}\|y-x\|^2\!\right),
\end{equation}
which is well defined and single-valued because the objective function 
in~\eqref{e:defprox} is strongly convex. We have 
$\prox_h=J_{\partial h}$ and it reduces to $P_C$,
the projection operator onto a closed convex set $C$, when 
$h=\iota_C$ is the indicator function of $C$, which takes the value 
$0$ in $C$ and $+\infty$ outside.

For every $L\ge 0$, we consider the class 
$\mathscr{C}^{1,1}_L(\HH)$ of 
functions $h\colon\HH\to\RR$ satisfying:
\begin{itemize}
	\item  $h$ is G\^ateaux differentiable in $\HH$, i.e., for every 
	$x\in\HH$ 
	there exists a linear bounded operator $Dh(x)\colon\HH\to\RR$ 
	such 
	that, for every $d\in\HH$,
	\begin{equation}
		%(\forall h\in\HH)\quad
		Dh(x)d=\lim_{t\downarrow 
			0}\frac{h(x+td)-h(x)}{t}=\scal{\nabla h(x)}{d},
	\end{equation}
	where we denote by $\nabla h(x)\in\HH$ the Riesz-Fr\'echet 
	representant, and
	
	\item $\nabla h\colon \HH\to\HH$ is $L$-Lipschitz continuous. 
\end{itemize}
Observe that, in view of 
\cite[Corollary~17.42]{bauschke2011:Convex_analysis},
every function in $\mathscr{C}^{1,1}_L(\HH)$ is Fr\'echet 
differentiable.
The following proposition is a direct consequence of 
{\cite[Proposition~18.15]
	{bauschke2011:Convex_analysis}} and
asserts that every convex function $h\in 
\mathscr{C}^{1,1}_L(\HH)$ 
satisfies that
$\nabla h$ is $1/L$-cocoercive and vice versa. This result
provides a subclass of $\mathcal{C}_{1/L}$ composed {with} 
%by 
gradients 
of convex functions in $\mathscr{C}^{1,1}_L(\HH)$.
\begin{proposition}
	\label{p:BH}
	Let $L\ge 0$ and let $h\colon\HH\to\RR$ be a convex 
	function. 
	Then 
	the following are 
	equivalent:
	\begin{enumerate}
		\item $h\in \mathscr{C}^{1,1}_L(\HH)$.
		\item $h$ is Fr\'echet differentiable and, for every 
		$(x,y)\in\HH^2$,
			$\scal{x-y}{\nabla h(x)-\nabla h(y)}\le L\|x-y\|^2$.
		\item $h$ is Fr\'echet differentiable and $\nabla h\in 
		\mathcal{C}_{1/L}$.
	\end{enumerate}
\end{proposition}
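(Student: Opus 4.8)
The plan is to verify the three conditions in the cyclic order $(1)\Rightarrow(2)\Rightarrow(3)\Rightarrow(1)$, treating $(2)\Rightarrow(3)$ as the one substantial step (this is the Baillon--Haddad theorem) and the remaining two as short consequences of the Cauchy--Schwarz inequality. Fréchet differentiability will not be an obstacle: as recalled just above the statement, every $h\in\mathscr{C}^{1,1}_L(\HH)$ is Fréchet differentiable by \cite[Corollary~17.42]{bauschke2011:Convex_analysis}, while (2) and (3) assume it outright, so all three conditions live in the same differentiable setting and the loop is meaningful.

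For $(1)\Rightarrow(2)$ I would simply combine the $L$-Lipschitz continuity of $\nabla h$ with Cauchy--Schwarz, so that $\scal{x-y}{\nabla h(x)-\nabla h(y)}\le\|x-y\|\,\|\nabla h(x)-\nabla h(y)\|\le L\|x-y\|^2$. Symmetrically, for $(3)\Rightarrow(1)$ I would feed Cauchy--Schwarz into the cocoercivity inequality \eqref{e:coco}: from $\tfrac1L\|\nabla h(x)-\nabla h(y)\|^2\le\scal{\nabla h(x)-\nabla h(y)}{x-y}\le\|\nabla h(x)-\nabla h(y)\|\,\|x-y\|$ one divides by $\|\nabla h(x)-\nabla h(y)\|$, the inequality being trivial when that quantity vanishes, to obtain $\|\nabla h(x)-\nabla h(y)\|\le L\|x-y\|$ and hence $h\in\mathscr{C}^{1,1}_L(\HH)$.

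The heart of the argument is $(2)\Rightarrow(3)$. First I would integrate the one-sided bound in (2) along the segment $t\mapsto z+t(x-z)$ to recover the descent lemma $h(x)\le h(z)+\scal{\nabla h(z)}{x-z}+\tfrac{L}{2}\|x-z\|^2$: applying (2) to the pair $(z+t(x-z),z)$ gives $\scal{\nabla h(z+t(x-z))-\nabla h(z)}{x-z}\le Lt\|x-z\|^2$, and integrating over $t\in[0,1]$ yields the claim. Next, for fixed $y$ I would introduce the convex differentiable function $\psi_y\colon x\mapsto h(x)-\scal{x}{\nabla h(y)}$, whose gradient is $\nabla h(\cdot)-\nabla h(y)$, so that $\psi_y$ is minimized at $x=y$. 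Since $\psi_y$ differs from $h$ by a linear term it satisfies the same bound (2), hence the same descent lemma; minimizing the right-hand side of that descent lemma over $x$, with minimizer $z-\tfrac1L\nabla\psi_y(z)$, and using $\psi_y(y)\le\psi_y(z-\tfrac1L\nabla\psi_y(z))$, produces $\tfrac{1}{2L}\|\nabla h(z)-\nabla h(y)\|^2\le h(z)-h(y)-\scal{z-y}{\nabla h(y)}$.

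Finally I would write the symmetric inequality obtained by interchanging $y$ and $z$ and add the two: the function-value terms cancel, the linear terms combine into $\scal{z-y}{\nabla h(z)-\nabla h(y)}$, and what remains is exactly $\tfrac1L\|\nabla h(z)-\nabla h(y)\|^2\le\scal{z-y}{\nabla h(z)-\nabla h(y)}$, i.e. $\nabla h\in\mathcal{C}_{1/L}$. I expect the main obstacle to be this $(2)\Rightarrow(3)$ step: extracting the descent lemma honestly from the purely one-sided inner-product bound in (2), and then exploiting the variational characterization of the minimizer of $\psi_y$. The symmetrization at the end is where the sharp constant $1/L$, rather than a weaker cocoercivity constant, is recovered.
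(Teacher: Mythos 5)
Your proposal is correct: the cycle $(1)\Rightarrow(2)\Rightarrow(3)\Rightarrow(1)$ with Cauchy--Schwarz for the two easy implications and the descent-lemma/surrogate-function/symmetrization argument for $(2)\Rightarrow(3)$ is exactly the standard proof of the Baillon--Haddad theorem. The paper itself gives no proof --- it invokes \cite[Proposition~18.15]{bauschke2011:Convex_analysis}, whose proof is this same argument --- so your write-up is a faithful self-contained version of what the citation encapsulates (modulo the degenerate case $L=0$, which the paper also leaves implicit).
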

A function $h\in\mathscr{C}_L^{1,1}(\HH)$ 
is $\rho$-strongly convex, for some $\rho\in\RPP$, if 
$h-\frac{\rho}{2} \Vert \cdot \Vert_2^2$ is convex or, equivalently, if 
$\nabla h$ is $\rho$-strongly monotone.

{In Table~\ref{tab:conCAOT}, we summarize the connections 
between convex analysis and operator theory. } For further details 
and properties of monotone operators and 
convex functions in Hilbert spaces, we refer the reader to
\cite{bauschke2011:Convex_analysis}.

\begin{table}[h]
\centering
\begin{tabular}{|c|c|}
\hline
{Convex analysis} & {Operator Theory}\\
\hline
\hline
{$h$ convex, differentiable, and  
$\nabla h$ is $L$-Lipschitz} & {$\nabla 
h$ is $1/L$-cocoercive}\\
{$h$ is $\rho$-strongly convex} 
& {$\partial h$ is $\rho$-strongly monotone}\\
{$h\in \Gamma_0(\HH)$} & 
{$\partial h$ maximally monotone}\\
{$\prox_h$} & {$J_{\partial f}=(\Id 
+ \partial h)^{-1}$}\\
\hline
\end{tabular}
\caption{{Connection between convex analysis and operator 
theory}.\label{tab:conCAOT}}
\end{table}

\subsection{Problem and algorithms}
\label{ssec:probalgo}
{In this paper,} we study several splitting algorithms in the context 
of
the monotone inclusion:
\begin{equation}
\label{e:moninc}
\text{find}\quad x\in\HH\quad\text{such that }\quad 
0\in \mathcal{A}x+\mathcal{B}x,
\end{equation}
where $\mathcal{A}\colon\HH\to 2^{\HH}$ and
$\mathcal{B}\colon\HH\to 2^{\HH}$ are maximally monotone 
operators.
The problem in \eqref{e:moninc} models several problems in game 
theory \cite{Nash}, and optimization problems as considered 
in signal and image processing 
\cite{Combettes_P_2007_inbook,
 Briceno_L_2011_j-math-imaging-vis_pro_ami, 
Cai_JF_2012_j-ams_ima_rtv,
Pustelnik_N_20016_j-w-enc-eee_wav_bid,Fessler_JA_2020_j-ieee-spm},
 among other areas.
In the particular case when $\mathcal{A}=\partial f$ and 
$\mathcal{B}=\partial g$ for some functions
$f$ and $g$ in $\Gamma_0(\HH)$, the 
convex optimization problem (under standard qualification 
conditions) 
	\begin{equation}
\label{e:convopt}
	\minimize{x\in\HH}{f(x)+g(x)},
\end{equation}
is an important particular 
instance of the problem in \eqref{e:moninc}
in view of 
\cite[Proposition~17.4]{bauschke2011:Convex_analysis}. 

In order to solve the problem in \eqref{e:moninc}, the 
algorithms we consider generate recursive 
sequences via Banach-Picard iterations of the form
\begin{equation}
	\label{e:BPiter}
(\forall k\in\NN)\quad 	x_{k+1}=\Phi x_k,
\end{equation}
where $x_0\in\HH$ and $\Phi\colon\HH\to\HH$ is a suitable 
nonexpansive operator which incorporates resolvents and/or 
explicit
computations of $\mathcal{A}$ and $\mathcal{B}$ and such that
 we can recover a 
solution in 
$(\mathcal{A}+\mathcal{B})^{-1}(\{0\})$ from its fixed points. 
More precisely, in this paper we study the following algorithms
for solving the problem in \eqref{e:moninc}.

\vspace{.3cm}
\noindent \textbf{Explicit algorithm (EA)} -- 
It corresponds to apply \eqref{e:BPiter} with the explicit operator
%$\Phi=G_{\tau(\mathcal{A}+\mathcal{B})}$,
%, where, %for every operator 
%$\mathcal{M}\colon\HH\to\HH$,
%\begin{equation}
%	\label{e:defexpl}
%	G_{\mathcal{M}}:=\Id-\mathcal{M},
%\end{equation}
\begin{equation}
	\label{e:defexpl}
	\Phi=G_{\tau(\mathcal{A}+\mathcal{B})}:=\Id-\tau(\mathcal{A}+\mathcal{B}),
\end{equation}
 for some 
$\tau>0$, {leading to the following iterations with $x_0\in 
\mathcal{H}$ and
\begin{equation}
(\forall k\in\NN)\quad 	x_{k+1}= x_k-\tau (\mathcal{A}x_k +  \mathcal{B}x_k).
\end{equation}}
 EA can be seen as an 
explicit Euler 
discretization of the dynamical system governed by 
$\mathcal{A}+\mathcal{B}$ in the single-valued case
\cite[Section~2.4]{PeySor}. In the particular case when 
$\mathcal{A}=\nabla 
f$ and $\mathcal{B}=\nabla 
g$ for smooth convex functions $f$ and $g$, EA 
corresponds to gradient descent \cite{Cauchy,Haskell}.
It is clear that, for every $\tau>0$, 
$(\mathcal{A}+\mathcal{B})^{-1}(0)=\Fix 
G_{\tau(\mathcal{A}+\mathcal{B})}$.

\vspace{.3cm}
\noindent\textbf{Proximal 
Point 	Algorithm (PPA)} -- It is proposed in \cite{Martinet}
for a variational inequality problem and by \cite{Rock76} in the 
maximally monotone context.
This algorithm corresponds to the iteration in \eqref{e:BPiter} 
governed by the resolvent
%$\Phi=J_{\tau 
%(\mathcal{B}+\mathcal{A})}$, for some $\tau>0$. %We recall that, 
%for every maximally monotone operator 
%$\mathcal{M}\colon\HH\to 2^{\HH}$, 
\begin{equation}
\Phi=J_{\tau(\mathcal{A}+\mathcal{B})}=(\Id+\tau(\mathcal{A}+\mathcal{B}))^{-1}.
\end{equation}
for some $\tau>0$, {leading to the following iterations with $x_0\in 
\mathcal{H}$ and
\begin{equation}
(\forall k\in\NN)\quad 	x_{k+1}= J_{\tau(\mathcal{A}+\mathcal{B})}x_k.
\end{equation}}
PPA can be seen as an implicit 
discretization of the dynamical system governed by 
$\mathcal{A}+\mathcal{B}$
\cite[Section~2.3]{PeySor}. In the particular case when 
$\mathcal{A}=\partial 
g$ and $\mathcal{B}=\partial f$ for some convex functions $f$ 
and $g$ satisfying standard qualification conditions, $J_{\tau 
	(\mathcal{A}+\mathcal{B})}=\prox_{\tau(f+g)}$ is the proximity 
	operator defined in \eqref{e:defprox} and motivates the name 
	to 
	the 
	algorithm.
Each (implicit) step of PPA
includes the resolution of a non-linear equation, but, in a large class of 
operators, this equation has an explicit solution or it is easy to solve.
It is clear that, for every $\tau>0$, 
$(\mathcal{A}+\mathcal{B})^{-1}(0)=\Fix 
J_{\tau(\mathcal{A}+\mathcal{B})}$.
%\np{Close form of a sum more difficult.}

\vspace{.3cm}
\noindent \textbf{Forward-Backward splitting (FBS)} -- 
It follows 
from \eqref{e:BPiter} with the Forward-Backward operator
%$\Phi=T_{\tau\mathcal{B},\tau\mathcal{A}}$, 
%where, for every operator  \np{$\mathcal{M}\colon\HH\to\HH$ and 
%$\mathcal{N}\colon\HH\to 2^{\HH}$}, 
\begin{equation}
	\label{e:defFB}
	\Phi=T_{\tau\mathcal{B},\tau\mathcal{A}}=J_{\tau\mathcal{B}}\circ 
	G_{\tau\mathcal{A}}=(\Id+\tau\mathcal{B})^{-1}(\Id-\tau\mathcal{A}),
\end{equation}
for some $\tau>0$, {leading to {the following iterations with 
$x_0\in \mathcal{H}$ and}
\begin{equation}
(\forall k\in\NN)\quad 	x_{k+1}= J_{\tau\mathcal{B}}(x_k-\tau\mathcal{A} x_k).
\end{equation}}%is  \np{and is averaged}, 
which
alternates {between} explicit and implicit steps. In the case when 
$\mathcal{A}=\nabla 
f$ and $\mathcal{B}=\partial g$, for some $f$ 
and $g$ in $\Gamma_0(\HH)$,  $J_{\tau\mathcal{B}} = \prox_{\tau g}$ for every $\tau>0$ and FBS is the 
proximal gradient algorithm (see, e.g., \cite{sims}). This method finds its 
roots in the 
projected gradient method \cite{levitin1966constrained} (case 
$g=\iota_C$ for some closed convex set $C$). In the 
context of variational inequalities appearing in some PDE's, a 
generalization of the projected gradient method is proposed in 
\cite{BreSib, Mercier, Sibony}.
It follows from
\cite[Proposition~26.1(iv)(a)]{bauschke2011:Convex_analysis} that,
for every $\tau>0$, $(\mathcal{A}+\mathcal{B})^{-1}(0)=\Fix 
	T_{\tau\mathcal{B},\tau \mathcal{A}}$.

\vspace{.3cm}
\noindent \textbf{Peaceman-Rachford splitting (PRS)} -- 
 This 
scheme fo\-llows 
from \eqref{e:BPiter} with the Peaceman-Rachford operator 
%$$, 
%where, for every operator  $\mathcal{M}\colon\HH\to2^{\HH}$ 
%and 
%$\mathcal{N}\colon\HH\to2^{\HH}$, 
\begin{equation}
	\label{e:defPR}
	\Phi=R_{\tau\mathcal{B},\tau\mathcal{A}}
=(2J_{\tau\mathcal{B}}-\Id)\circ 
	(2J_{\tau\mathcal A}-\Id),
\end{equation}
for some $\tau>0$, {leading to the following iterations with $x_0\in 
\mathcal{H}$ and
\begin{equation}
(\forall k\in\NN)\quad 	\begin{cases}
x_{k+1/2}= J_{\tau\mathcal A}x_k,\\
x_{k+1} = 2J_{\tau\mathcal{B}}(2 x_{k+1/2} - x_{k}) - 2 x_{k+1/2} + x_{k}.
\end{cases}
\end{equation}} 
%\np{Observe that, in the case 
%when $\mathcal{M}=\partial \iota_C$ and $\mathcal{N}=\partial 
%\iota_D$ for closed convex sets $C$ and $D$, \eqref{e:moninc} 
%reduces to find a common point in $C\cap D$ and 
%\eqref{e:defPR} reduces to 
%$R_{\mathcal{N},\mathcal{M}}=(2P_C-\Id)\circ(2P_D-\Id)$
%which alternates the 
%reflections $2P_C-\Id$ and 
%$2P_D-\Id$. } 
PRS is first proposed in \cite{PRS}
for solving some linear systems derived 
from discretizations of PDE's and it is studied in the 
non-linear monotone case in \cite{LionsMercier}.
It follows from 
\cite[Proposition~26.1(iii)(b)]{bauschke2011:Convex_analysis}
that, for every $\tau>0$, $	(\mathcal{A}+\mathcal{B})^{-1}(0)
	=J_{\tau\mathcal{A}}(\Fix 
	R_{\tau\mathcal{B},\tau \mathcal{A}})$.
As before, we recover PRS in the optimization context by 
using the identity $J_{\partial h}=\prox_h$ for 
$h\in\Gamma_0(\HH)$. %\np{$\mathcal{A}=\partial f$ and 
%$\mathcal{B}=\partial g$, leading to $J_{\mathcal{A}} = 
%\prox_{f}$ 
%and $J_{\mathcal{B}} = \prox_{g}$. }

\vspace{.3cm}
\noindent \textbf{Douglas-Rachford splitting (DRS)} -- 
This 
scheme fo\-llows 
from \eqref{e:BPiter} with Douglas-Rachford operator
%$$, 
%where, for every operator  $\mathcal{M}\colon\HH\to\HH$ and 
%$\mathcal{N}\colon\HH\to\HH$, 
\begin{equation}
	\label{e:defDR}
	\Phi=S_{\tau\mathcal{B},\tau\mathcal{A}} =
	\frac{\Id+R_{\tau\mathcal{B},\tau\mathcal{A}}}{2}\\
	=J_{\tau\mathcal{B}}(2J_{\tau\mathcal{A}}-\Id)+\Id- J_{\tau\mathcal{A}},
\end{equation}
for some $\tau>0$, which is the average 
between $\Id$ and $R_{\tau\mathcal{B},\tau\mathcal{A}}$, 
{leading to the following iterations with $x_0\in \mathcal{H}$ and
\begin{equation}
(\forall k\in\NN)\quad 	\begin{cases}
x_{k+1/2}= J_{\tau\mathcal A}x_k,\\
x_{k+1} = J_{\tau\mathcal{B}}(2 x_{k+1/2} - x_{k}) - x_{k+1/2} + x_{k}.
\end{cases}
\end{equation}} 
The 
algorithm is first proposed for solving some linear systems derived 
from discretizations of PDE's \cite{DR} and it is studied in the 
non-linear monotone case in \cite{LionsMercier}. It follows from 
\cite[Proposition~26.1(iii)(b)]{bauschke2011:Convex_analysis}
that, for every $\tau>0$, $	
(\mathcal{A}+\mathcal{B})^{-1}(0)=J_{\tau\mathcal{A}}(\Fix 
	S_{\tau\mathcal{B},\tau \mathcal{A}})$.
As before, we recover DRS in the optimization context by 
using the identity $J_{\partial h}=\prox_h$ for 
$h\in\Gamma_0(\HH)$.

\subsection{State-of-the-art on convergence of algorithms}
%It is well known that, for every $\eta$-cocoercive operator 
%$\mathcal{M}$ and every $\tau\in\left]0,2\eta\right[$, 
%$G_{\tau\mathcal{M}}$ is averaged nonexpansive
%\cite[Proposition~4.39]{bauschke2011:Convex_analysis} and, 
%therefore, EA converges weakly to a point in 
%$\mathcal{M}^{-1}(\{0\})$ in view of 
%Proposition~\ref{p:convweak}.
%Therefore, if 
%$\mathcal{A}$ and $\mathcal{B}$ are cocoercive, 
%$\mathcal{M}=\mathcal{A}+\mathcal{B}$ is cocoercive
%and EA achieves weak convergence to a solution to 
%\eqref{e:moninc}.
If $\mathcal{M}=\mathcal{A}+\mathcal{B}$ is 
strongly monotone{, $\eta$-cocoercive,} and 
$\tau\in\left]0,2\eta\right[$,
$G_{\tau\mathcal{M}}$ is Lipschitz continuous with 
constant in $\left]0,1\right[$ \cite[Fact~7]{Yin20} and EA 
achieves linear convergence in view of 
Proposition~\ref{p:convlinear}. 
%On the other hand, 
%for every $\tau>0$ and any maximally
%monotone operator $\mathcal{M}$, $J_{\tau\mathcal{M}}$ is 
%firmly nonexpansive 
%\cite[Proposition~23.8]{bauschke2011:Convex_analysis}, 
%which 
%allows us to 
%prove the weak convergence of PPA to a point in 
%$\mathcal{M}^{-1}(\{0\})$. 
%If we additionally assume strong 
%monotonicity of $\mathcal{M}$, we obtain that 
In addition, $J_{\tau\mathcal{M}}$ is Lipschitz continuous with 
constant in $\left]0,1\right[$ and PPA 
converges linearly
\cite[Proposition~23.13]{bauschke2011:Convex_analysis}. 
However, when 
$\mathcal{M}=\mathcal{A}+\mathcal{B}$, the computation 
of $J_{\tau\mathcal{M}}$ can be difficult, and other splitting methods 
as EA, FBS, PRS, and DRS can be considered in order to reduce the 
computational time by iteration.

%In the case of FBS, the weak convergence of the iterations 
%generated 
%by \eqref{e:BPiter} with 
%$\Phi=T_{\tau\mathcal{B},\tau\mathcal{A}}$ is guaranteed
%if $\mathcal{A}$ is $\alpha$-cocoercive and 
%$\tau\in\left]0,2\alpha\right[$  
%\cite[Theorem~26.14]{bauschke2011:Convex_analysis}.
%This is a consequence of the averaged nonexpansiveness of 
%$T_{\tau\mathcal{B},\tau\mathcal{A}}$ in this context 
%\cite[Proposition~26.1(iv)(d)]{bauschke2011:Convex_analysis}. 
If we assume the strong monotonicity of 
$\mathcal{A}$ or $\mathcal{B}$,
the linear convergence of FBS is guaranteed 
\cite[Theorem~26.16]{bauschke2011:Convex_analysis}, which 
follows from the Lipschitz continuity of 
$T_{\tau\mathcal{B},\tau\mathcal{A}}$
with Lipschitz constant in $\left]0,1\right[$. 
In \cite{ChenRock} the authors provide a detailed analysis of the 
convergence rates 
of FBS in the strongly monotone context. 
If $\mathcal{A}$ is not 
cocoercive the convergence of FBS 
is not guaranteed and, if it is not single-valued,  it is not applicable. In 
these contexts PRS and DRS can be 
used if $J_\mathcal{A}$ is not difficult to compute. In the case 
when $\mathcal{A}$ and $\mathcal{B}$ are merely maximally 
monotone, reflections $2J_{\mathcal{A}}-\Id$ and 
$2J_{\mathcal{B}}-\Id$ are merely nonexpansive, and the 
convergence of PRS is not guaranteed. This motivates the 
average with $\Id$ in \eqref{e:defDR}, which allows to obtain the  
weak convergence of DRS to a solution. Under the cocoercivity 
assumption on $\mathcal{A}$, the weak 
convergence of PRS is guaranteed in 
\cite[Corollary~1\& Remark~2(2)]{LionsMercier}.
If in addition we suppose the strong monotonicity 
of $\mathcal{A}$,
the reflection $2J_{\mathcal{A}}-\Id$ is Lipschitz continuous with 
constant in $\left]0,1\right[$ \cite{Giselsson2017} and, therefore, 
PRS converges linearly to a solution. This property 
also holds for DRS, but with a larger convergence rate. 
Of course, previous properties are inherited by the algorithms in 
the particular optimization context, sometimes with better 
convergence rates by exploiting the variational formulation 
\cite{Yin20,Davis17,Taylor18,BoydGiss}.  

In summary, without any cocoercivity on {the} problem 
\eqref{e:moninc} the only available convergent method 
is DRS, if resolvents are easy to compute. However, in the fully 
cocoercive setting all the methods under study are convergent 
and can be implemented, and there is no theoretical/numerical 
comparison of these methods in the literature in this context. 
In this paper, as stated in Section~\ref{sec:intro}, we focus on 
cocoercive equations involving the sum of two operators, in which one 
of them is strongly monotone. Even if restrictive, this setting allows us 
to provide and compare the optimal linear convergence rates of the 
four algorithms described above. This analysis is further refined for 
minimization problems involving the 
sum of two smooth convex functions with Lipschitzian gradients, in 
which one of them is strongly convex. We also indicate which 
algorithm is more efficient depending on strong convexity and 
Lipschitz parameters.
We start by studying cocoercive equations.
\section{Cocoercive equations}
\label{sec:coco}
In this section we study properties of different numerical schemes 
for solving the following cocoercive equation.
\begin{problem}
	\label{prob:mainsplit}
	Let $(\alpha,\beta)\in\RPP^2$, let $\rho\in\left]0,\alpha^{-1}\right]$, 
	let
	$\mathcal{A}\in\mathcal{C}_{\alpha}$ be $\rho$-strongly monotone, 
	and let
	$\mathcal{B}\in\mathcal{C}_{\beta}$. 
	The problem is to 
	\begin{equation}
		\text{find}\quad x\in\HH\quad\text{such that }\quad 
		\mathcal{A}x+\mathcal{B}x=0,
	\end{equation}
	under the assumption that solutions exist.
\end{problem}
{Note that, any $\rho$-strongly convex and 
$\alpha$-cocoercive operator $\mathcal{A}$ should satisfy 
$\rho\le 1/\alpha$, 
since, for every $x$ and $y$ in $\HH$, we have
\begin{equation}
\rho\|x-y\|\le \scal{x-y}{\mathcal{A}x-\mathcal{A}y}\le 
\|x-y\|\|\mathcal{A}x-\mathcal{A}y\|\le \alpha^{-1}\|x-y\|^2.
\end{equation}
Therefore, the assumption $\rho\in\left]0,\alpha^{-1}\right]$ is 
{not 
restrictive}. %at all.
 In order to motivate the cocoercive equation in 
Problem~\ref{prob:mainsplit}, {we} consider the following 
example.}
\begin{example}
\label{ex:coco1}
{Set $\HH=\RR^N$, let $A$ and $D$ be $M\times N$ and 
$K\times N$ real matrices, respectively.  
Let $z\in\RR^M$, let $h\in\mathscr{C}_{L}^{1,1}(\RR^K)$, and
consider the problem 
\begin{equation}
\label{e:excoco}
\min_{x\in \RR^n}\frac{1}{2}\|Ax-z\|^2+h(Dx).
\end{equation}
This minimization problem is typically encountered in image processing when the matrix $D$ represents the discrete gradient and $h$ is 
a smooth version of the $\ell_{1,2}$-norm leading to hyperbolic total-variation \cite{Charbonnier_P_1997_j-ieee-tip_det,Denneulin_L_2021_AA_rha}.}

{
Note that, if $A$ is a full rank matrix and by setting $\mu$ to be the smallest eigenvalue of 
$A^{\top}A$, we have  
$\mu>0$ and the function 
$f\colon x\mapsto\|Ax-z\|^2/2$ is in 
$\mathscr{C}^{1,1}_{\|A\|^2}(\RR^n)$ and it is
$\mu$-strongly convex. 
Moreover, the optimality conditions of a solution 
$\widehat{x}$ to \eqref{e:excoco} read
\begin{equation}
\label{e:copti}
0=\nabla f(\widehat{x})+D^{\top}\nabla h(D\widehat{x}). 
\end{equation}
By defining $\widehat{u}=\nabla h(D\widehat{x})$, it follows 
from \cite[Proposition~16.10]{bauschke2011:Convex_analysis} 
that 
$D\widehat{x}\in \partial h^*(\widehat{u})$ and
\eqref{e:copti} is equivalent to 
\begin{equation}
\label{e:pdopti}
\begin{cases}
0=\nabla f(\widehat{x})+D^{\top}\widehat{u}\\
0\in\partial h^*(\widehat{u})-D\widehat{x}.
\end{cases}
\end{equation}
Therefore, 
by defining 
\begin{equation}
\label{e:maxmonops}
\begin{cases}
\mathcal{A}\colon (x,u)\mapsto \{\nabla 
f({x})-\eta x\}\times(\partial h^*({u})-\eta u)\\
\mathcal{B}\colon (x,u)\mapsto (\eta x+D^{\top}u,\eta u-Dx),
\end{cases}
\end{equation}
where $\eta\in\left]0,\min\{\mu,\frac{1}{L}\}\right[$,
\eqref{e:pdopti} is equivalent to find 
$(\widehat{x},\widehat{u})\in\RR^n\times\RR^p$ such that 
\begin{equation}
\label{eq:monincexample}(0,0)\in\mathcal{A}(\widehat{x},\widehat{u})
+\mathcal{B}(\widehat{x},\widehat{u}),\end{equation} 
where $\mathcal{A}$ is $(\min\{\mu,\frac{1}{L}\}-\eta)$-strongly monotone, and
$\mathcal{B}$ is $\eta/\|\mathcal{B}\|$-cocoercive. 
Hence \eqref{eq:monincexample} is a special instance of \eqref{e:moninc}.
If we include the assumption that  $h$ is a $\rho$-strongly convex 
function, 
$\mathcal{A}$ is  
$(\min\{\rho,\|A\|^{-2}\}/(1+\eta\max\{(1/\mu,L\})^2)$-cocoercive, 
and 
\eqref{e:pdopti} becomes a particular instance of 
Problem~\ref{prob:mainsplit}. The proof of the properties on 
$\mathcal{A}$ and $\mathcal{B}$ above are detailed in the 
appendix (see Section~\ref{sec:cocomon}).}
\end{example}

It turns out that, because of the strong monotonicity assumption, 
there exists a unique solution
{$\widehat{x}\in(\mathcal{A+B})^{-1}(0)$} and the operators 
 $G_{\tau(\mathcal{A}+\mathcal{B})}$,
$T_{\tau\mathcal{B},\tau\mathcal{A}}$, 
$R_{\tau\mathcal{B},\tau\mathcal{A}}$, 
and 
$S_{\tau\mathcal{B},\tau\mathcal{A}}$ defined in 
\eqref{e:defexpl}--\eqref{e:defDR}
are $\omega(\tau)$-Lipschitz continuous for some 
 $\omega(\tau)\in \left]0,1\right[$, under suitable conditions on $\tau$.
The Lipschitz continuous constant of each algorithm corresponds 
to its linear 
convergence rate in view of Proposition~\ref{p:convlinear}, which 
allows the user to compare not only numerically 
but also theoretically the convergence behavior of each method. 
In the next 
proposition, we summarize the convergence rates for the
schemes governed by the operators defined in 
\eqref{e:defexpl}--\eqref{e:defDR} aiming to solve 
Problem~\ref{prob:mainsplit}. 

\begin{proposition}
\label{p:stronglysplit1}
Let $\tau>0$. In the context of Problem~\ref{prob:mainsplit},  the 
following hold:
\begin{enumerate}
\item 
\label{p:stronglysplit1i}
Suppose that 
$\tau\in\left]0,2\beta\alpha/(\beta+\alpha)\right[$. Then
$G_{\tau(\mathcal{A}+\mathcal{B})}$ is 
$\omega_{G}(\tau)$-Lipschitz 
continuous, where
\begin{equation}
\label{e:constantG2strong}
\hspace{-0.5cm}\omega_G(\tau):=
\sqrt{1-\frac{2\tau\rho}{\alpha(2\beta-\tau)}
\big(2\beta\alpha-\tau(\beta+\alpha)\big)}\in\left]0,1\right[.
\end{equation}
In particular, the minimum in \eqref{e:constantG2strong} is 
achieved at
\begin{equation}
	\label{eq:taugrad2}
	\tau^*=\frac{2\beta\alpha}{\sqrt{\beta+\alpha}
		(\sqrt{\beta+\alpha}+\sqrt{\beta})}\quad\text{and}\quad
	 \omega_{G}(\tau^*)
	=\sqrt{1-\frac{4\rho\beta\alpha }{	
	(\sqrt{\beta+\alpha}+\sqrt{\beta})^2}}.
\end{equation}
\item 
\label{p:stronglysplit1ii}
Suppose that $\tau\in\left]0,2\alpha\right[$. Then
$T_{\tau\mathcal{B},\tau\mathcal{A}}$ is 
$\omega_{T_1}(\tau)$-Lipschitz 
continuous, where
\begin{equation}
\label{e:constantFB1strong}
\omega_{T_1}(\tau):=\sqrt{1-\frac{\tau\rho}{\alpha}
(2\alpha-\tau)}\in\left]0,1\right[.
\end{equation}
In particular, the minimum in \eqref{e:constantFB1strong} is 
achieved at
\begin{equation}
	\label{eq:tauFB1}
	\tau^*=\alpha\quad
	\text{ and }\quad \omega_{T_1}(\tau^*)
	=\sqrt{1-\alpha\rho}.
\end{equation}
\item 
\label{p:stronglysplit1iiv}
Suppose that $\tau\in\left]0,2\beta\right]$. Then
$T_{\tau\mathcal{A},\tau\mathcal{B}}$ is 
$\omega_{T_2}(\tau)$-Lipschitz 
continuous, where
\begin{equation}
\label{e:FB2strong}
\omega_{T_2}(\tau):=\dfrac{1}{1+\tau\rho}\in\left]0,1\right[.
\end{equation}
In particular, the minimum in \eqref{e:FB2strong} is 
achieved at
\begin{equation}
	\label{eq:tauFB2}
	\tau^*=2\beta\quad
	\text{ and }\quad \omega_{T_2}(\tau^*)
	=\frac{1}{1+2\beta\rho}.
\end{equation}
\item
\label{p:stronglysplit1iii}
$R_{\tau\mathcal{B},\tau\mathcal{A}}$ and 
$R_{\tau\mathcal{A},\tau\mathcal{B}}$ are 
$\omega_R(\tau)$-Lipschitz 
continuous, where
\begin{equation}
\label{e:PRstrong}
\omega_R(\tau)=\sqrt{\dfrac{\alpha-2\tau\rho\alpha+\tau^2\rho}
{\alpha+2\tau\rho\alpha+\tau^2\rho}}
\in\left]0,1\right[.
\end{equation}
In particular, the minimum in \eqref{e:PRstrong} is 
achieved at
\begin{equation}
	\label{eq:taupr}
	\tau^*=\sqrt{\frac{\alpha}{\rho}}\quad
	\text{ and }\quad \omega_{R}(\tau^*)
	=\sqrt{\dfrac{1-\sqrt{\alpha\rho}}{1+\sqrt{\alpha\rho}}}.
\end{equation}
\item
\label{p:stronglysplit1iv}
$S_{\tau\mathcal{B},\tau\mathcal{A}}$ and 
$S_{\tau\mathcal{A},\tau\mathcal{B}}$ are $\omega_S(\tau)$-Lipschitz 
continuous, where
\begin{equation}
	\label{e:constDRgen}
\omega_S(\tau)=\min\left\{\frac{1+\omega_R(\tau)}{2},
\frac{\beta+\tau^2\rho}{\beta+\tau\beta\rho+\tau^2\rho}\right\}
\in\left]0,1\right[.
\end{equation}
In particular, the minimum in \eqref{e:constDRgen} is 
achieved at
\begin{equation}
	\label{eq:taudrgen}
	\tau^*=
\begin{cases}
\sqrt{\frac{\alpha}{\rho}},\quad&\text{if}\:\:\beta\leq 
\frac{4\alpha}{(1+\sqrt{1-\alpha\rho})^2};\\
\sqrt{\frac{\beta}{\rho}},\quad&\text{otherwise},
\end{cases}\quad\text{and}\quad 
 \omega_{S}(\tau^*)=
\begin{cases}
\frac{1+\sqrt{1-{\alpha\rho}}}{1+\sqrt{1-{\alpha\rho}}+\sqrt{\alpha\rho}}
,\quad&\text{if}\:\:\beta\leq 
\frac{4\alpha}{(1+\sqrt{1-\alpha\rho})^2};\\
\frac{2}{2+\sqrt{\beta\rho}},\quad&\text{otherwise}.
\end{cases}
\end{equation}
\end{enumerate}
\end{proposition}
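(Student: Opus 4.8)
The plan is to bound each of the five operators by expanding the squared norm of its increment and introducing the difference vectors $d=x-y$, $a=\mathcal{A}x-\mathcal{A}y$, $b=\mathcal{B}x-\mathcal{B}y$, and then combining the three defining inequalities of Problem~\ref{prob:mainsplit} — strong monotonicity $\scal{a}{d}\ge\rho\|d\|^2$ and the cocoercivities $\scal{a}{d}\ge\alpha\|a\|^2$, $\scal{b}{d}\ge\beta\|b\|^2$ — with the (firm) nonexpansiveness of the resolvents recorded in Section~\ref{sec:prelim}. In each case the free multipliers are tuned so that the coefficients of $\|a\|^2$ and $\|b\|^2$ vanish, leaving a multiple of $\|d\|^2$; the admissible range of $\tau$ is exactly where those multipliers stay nonnegative, and $\tau^*$ comes from a one-variable minimization of $\omega(\tau)$. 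The two forward--backward cases are shortest. In part (ii), $J_{\tau\mathcal{B}}$ is nonexpansive, so $\|T_{\tau\mathcal{B},\tau\mathcal{A}}x-T_{\tau\mathcal{B},\tau\mathcal{A}}y\|\le\|d-\tau a\|$ and I expand $\|d-\tau a\|^2=\|d\|^2-2\tau\scal{a}{d}+\tau^2\|a\|^2$; splitting $\scal{a}{d}=\lambda\scal{a}{d}+(1-\lambda)\scal{a}{d}$ and bounding the pieces by $\rho\|d\|^2$ and $\alpha\|a\|^2$, the choice $1-\lambda=\tau/(2\alpha)$ cancels $\|a\|^2$ and yields $\omega_{T_1}(\tau)^2=1-(\tau\rho/\alpha)(2\alpha-\tau)$, valid for $\tau<2\alpha$, a parabola maximized at $\tau^*=\alpha$. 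In part (iii) the roles reverse: from $x-y=(p-q)+\tau(\mathcal{A}p-\mathcal{A}q)$ with $p=J_{\tau\mathcal{A}}x$, $q=J_{\tau\mathcal{A}}y$, strong monotonicity gives that $J_{\tau\mathcal{A}}$ is $1/(1+\tau\rho)$-Lipschitz, while $\beta$-cocoercivity gives $\|(\Id-\tau\mathcal{B})x-(\Id-\tau\mathcal{B})y\|^2=\|d\|^2+\tau(\tau-2\beta)\|b\|^2\le\|d\|^2$ for $\tau\le2\beta$; composing gives $\omega_{T_2}(\tau)=1/(1+\tau\rho)$, decreasing in $\tau$, hence minimal at $\tau^*=2\beta$.

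Part (i) is the same idea with one extra term. Expanding $\|d-\tau(a+b)\|^2$ produces $2\tau^2\scal{a}{b}$, which I control by Young's inequality $2\scal{a}{b}\le s\|a\|^2+s^{-1}\|b\|^2$. Choosing $s=\tau/(2\beta-\tau)$ makes the total $\|b\|^2$-coefficient $\tau^2(1+s^{-1})-2\tau\beta=0$ after using $\scal{b}{d}\ge\beta\|b\|^2$ (forcing $\tau<2\beta$); the convex-combination trick on $\scal{a}{d}$ with $1-\lambda=\tau(1+s)/(2\alpha)=\tau\beta/(\alpha(2\beta-\tau))$ then cancels $\|a\|^2$. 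What survives is $\omega_G(\tau)^2=1-2\tau\rho\lambda$, and substituting $\lambda$ gives exactly \eqref{e:constantG2strong}. Nonnegativity of the multipliers (equivalently $\lambda\in\left]0,1\right[$) is equivalent to $\tau<2\alpha\beta/(\alpha+\beta)$, and differentiating $\omega_G^2$ in $\tau$ produces $\tau^*$ in \eqref{eq:taugrad2}.

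For the reflection schemes I first reduce to a single reflected resolvent. As $\mathcal{A}$ and $\mathcal{B}$ are maximally monotone their resolvents are firmly nonexpansive, so the reflected resolvent of the non-strongly-monotone operator $\mathcal{B}$ is nonexpansive and drops out of the product; hence both $R_{\tau\mathcal{B},\tau\mathcal{A}}$ and $R_{\tau\mathcal{A},\tau\mathcal{B}}$ are governed by the Lipschitz constant of $2J_{\tau\mathcal{A}}-\Id$. Setting $P=J_{\tau\mathcal{A}}x-J_{\tau\mathcal{A}}y$ and $U=\mathcal{A}(J_{\tau\mathcal{A}}x)-\mathcal{A}(J_{\tau\mathcal{A}}y)$ gives $x-y=P+\tau U$ and $(2J_{\tau\mathcal{A}}-\Id)x-(2J_{\tau\mathcal{A}}-\Id)y=P-\tau U$, so the task is to maximize $\|P-\tau U\|^2/\|P+\tau U\|^2$ under $\scal{P}{U}\ge\rho\|P\|^2$ and $\scal{P}{U}\ge\alpha\|U\|^2$. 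Since this ratio is decreasing in $\scal{P}{U}$, the worst case makes both constraints active, i.e. $\scal{P}{U}=\rho\|P\|^2$ and $\|U\|^2=(\rho/\alpha)\|P\|^2$ (compatible with Cauchy--Schwarz precisely because $\rho\le\alpha^{-1}$), which yields \eqref{e:PRstrong}; the one-variable minimization gives $\tau^*=\sqrt{\alpha/\rho}$.

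For DRS, $S=(\Id+R)/2$ immediately gives the first bound $(1+\omega_R(\tau))/2$ from the triangle inequality and part (iv). The refined bound is the crux. Tracking the increment through $S_{\tau\mathcal{B},\tau\mathcal{A}}$ one finds $Sx-Sy=P-\tau U'$, where $U'=\mathcal{B}r-\mathcal{B}s$ is the $\mathcal{B}$-increment at the inner reflected points $r,s$ and where $R:=r-s=P-\tau U-\tau U'$. Feeding $R$ into the cocoercivity inequality $\scal{U'}{R}\ge\beta\|U'\|^2$ together with the strong monotonicity and $\alpha$-cocoercivity of $\mathcal{A}$, and optimizing the resulting quadratic form in $(P,U,U')$, produces the second term of \eqref{e:constDRgen}; the analogous computation with the operators interchanged handles $S_{\tau\mathcal{A},\tau\mathcal{B}}$. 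Taking the minimum of the two bounds and minimizing over $\tau$ — where $(1+\omega_R)/2$ is the smaller for small $\beta$ and the refined bound for large $\beta$ — yields the dichotomy and the values in \eqref{eq:taudrgen} (one checks the identity $(1+\omega_R(\sqrt{\alpha/\rho}))/2=(1+\sqrt{1-\alpha\rho})/(1+\sqrt{1-\alpha\rho}+\sqrt{\alpha\rho})$ and that the refined bound is minimized at $\tau^*=\sqrt{\beta/\rho}$). I expect this refined DRS estimate to be the main obstacle, since unlike the other parts it is a genuine multi-vector quadratic optimization (a performance-estimation problem) in which one must identify the correct active inequalities and multipliers rather than merely annihilate one squared term, and one must separately verify the threshold $\beta\le 4\alpha/(1+\sqrt{1-\alpha\rho})^2$ at which the two bounds exchange dominance.
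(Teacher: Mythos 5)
Your proposal is correct and, for parts \eqref{p:stronglysplit1i}--\eqref{p:stronglysplit1iiv}, is essentially the paper's own proof: the paper expands $\|G_{\tau\mathcal{M}}x-G_{\tau\mathcal{M}}y\|^2$ with $\mathcal{M}=\mathcal{A}+\mathcal{B}$, splits $\scal{\mathcal{A}x-\mathcal{A}y}{x-y}$ with a convex weight $\lambda$ between the strong-monotonicity and cocoercivity bounds, and controls $\|\mathcal{M}x-\mathcal{M}y\|^2\le(1+\varepsilon)\|\mathcal{B}x-\mathcal{B}y\|^2+(1+\varepsilon^{-1})\|\mathcal{A}x-\mathcal{A}y\|^2$ with $\varepsilon=(2\beta-\tau)/\tau$ and $\lambda=\tau\beta/(\alpha(2\beta-\tau))$ --- exactly your Young parameter and convex weight up to relabelling; parts \eqref{p:stronglysplit1ii} and \eqref{p:stronglysplit1iiv} are then the same compositions with a nonexpansive factor. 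The divergence is in parts \eqref{p:stronglysplit1iii} and \eqref{p:stronglysplit1iv}, where the paper computes nothing: it quotes \cite{Giselsson2017} (Theorem~7.2 for the contraction factor of $2J_{\tau\mathcal{A}}-\Id$, Lemma~3.3 and Theorem~5.6 for the two DRS bounds). Your direct derivation of \eqref{e:PRstrong} is a sound self-contained replacement: the ratio $\|P-\tau U\|^2/\|P+\tau U\|^2$ is indeed decreasing in $\scal{P}{U}$ for fixed norms, and the case analysis on $\|U\|/\|P\|$ forces both constraints active at the maximum, yielding the stated constant (attainable since $\rho\le\alpha^{-1}$). For \eqref{p:stronglysplit1iv} your setup is right ($S_{\tau\mathcal{B},\tau\mathcal{A}}x-S_{\tau\mathcal{B},\tau\mathcal{A}}y=P-\tau U'$ with $\scal{U'}{P-\tau U-\tau U'}\ge\beta\|U'\|^2$), and your identity $(1+\omega_R(\sqrt{\alpha/\rho}))/2=(1+\sqrt{1-\alpha\rho})/(1+\sqrt{1-\alpha\rho}+\sqrt{\alpha\rho})$, the minimizer $\tau^*=\sqrt{\beta/\rho}$ of the second bound, and the crossover threshold $\beta\le 4\alpha/(1+\sqrt{1-\alpha\rho})^2$ all check out; but the multi-vector optimization producing $(\beta+\tau^2\rho)/(\beta+\tau\beta\rho+\tau^2\rho)$ is only announced, not executed. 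Two small remarks there: that constant does not involve $\alpha$, so only the $\rho$-strong monotonicity of $\mathcal{A}$ and the $\beta$-cocoercivity of $\mathcal{B}$ enter (the $\alpha$-cocoercivity you invoke is not needed for that term); and the two orderings $S_{\tau\mathcal{B},\tau\mathcal{A}}$ and $S_{\tau\mathcal{A},\tau\mathcal{B}}$ are not symmetric, since only $\mathcal{A}$ is strongly monotone, so the ``analogous computation with the operators interchanged'' needs its own verification rather than an appeal to symmetry. The paper sidesteps both points by citation, so relative to its own proof your plan is more ambitious but leaves its hardest step as a plan.
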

\noindent The proof is provided in 
Appendix~\ref{app:stronglysplit1}.
Observe that 
Proposition~\ref{p:stronglysplit1}\eqref{p:stronglysplit1i}
is a new result, in which the Lipschitz constant of the explicit 
operator is improved with respect to 
considering a single operator when {splitting} 
%a splitting 
is possible (see 
Remark~\ref{rem:compsplitornot}).
Proposition~\ref{p:stronglysplit1}\eqref{p:stronglysplit1ii} provides a 
smaller
Lipschitz-constant for operator $T_{\tau\mathcal{B},\tau\mathcal{A}}$ 
than in \cite[Remarque~3.1(2)]{Mercier80}, 
\cite[Theorem~2.4]{ChenRock},
\cite[Proposition~1(d)]{Tseng91}, and
\cite[Proposition~26.16(ii)]{bauschke2011:Convex_analysis}, 
 by exploiting the cocoercivity of $\mathcal{A}$. On the other hand,
in Proposition~\ref{p:stronglysplit1}\eqref{p:stronglysplit1iiv} we 
obtain a better Lipschitz constant for 
$T_{\tau\mathcal{A},\tau\mathcal{B}}$ than in 
\cite[Proposition~1(d)]{Tseng91} and \cite[Theorem~2.4]{ChenRock}, 
and we
recover the Lipschitz constant  in 
\cite[Proposition~26.16(i)]{bauschke2011:Convex_analysis}, 
but we obtain a smaller Lipschitz constant by allowing 
$\tau=2\beta$. The Lipschitz constant of 
$R_{\tau\mathcal{A},\tau\mathcal{B}}$ and 
$R_{\tau\mathcal{B},\tau\mathcal{A}}$ in \eqref{e:PRstrong} is 
obtained in 
\cite[Theorem~7.4]{Giselsson2017}, and it is smaller than 
Lipschitz constants in \cite[Theorem~6.5 \& 
Theorem~5.6]{Giselsson2017} which 
are also valid in our context. The constant in 
\eqref{e:constDRgen} is provided in 
\cite[Theorem~7.4]{Giselsson2017} and it is tighter than
the constant obtained in \cite[Proposition~4]{LionsMercier}, 
which does 
not take advantage of the full cocoercivity of the problem.
The Lipschitz constant of 
$S_{\tau\mathcal{A},\tau\mathcal{B}}$ and 
$S_{\tau\mathcal{B},\tau\mathcal{A}}$ in \eqref{e:constDRgen}
is obtained from \cite[Theorem~5.6\,\&\,Theorem~7.4]{Giselsson2017}
by exploiting the cocoercivity of $\mathcal{A}$ and $\mathcal{B}$.
When $\alpha$ is large with respect to $\beta$, our constant is 
sharper than the constant 
in \cite[Corollary~4.2]{Ryu20} (see Figure~\ref{Fig:ryu}), which is 
obtained via computer-assisted analysis. This is because the 
cocoercivity 
of $\mathcal{A}$ is not considered in \cite{Ryu20}.
\begin{figure}
\begin{center}
\includegraphics[scale=0.4]{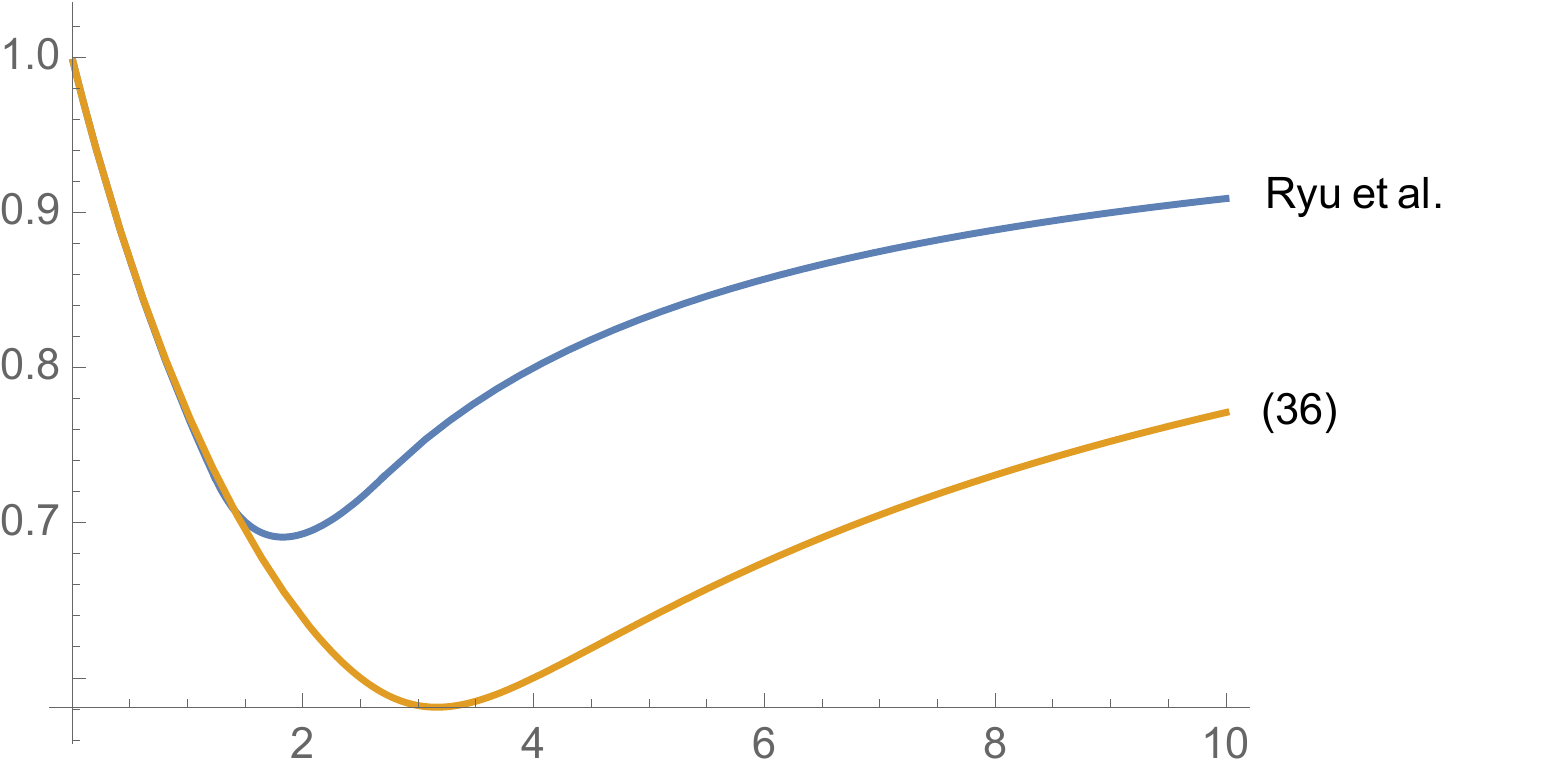}\\
{Rate vs step-size $\tau$}
\end{center}
\caption{\small Comparison between the Lipschitz constants in 
\cite{Ryu20}
and \eqref{e:constDRgen} for DRS when $\beta=1$, $\rho=0.3$, and 
$\alpha=3$.}
\label{Fig:ryu}
\end{figure}

In the case when $\mathcal{B}=0$, 
by taking $\beta\to+\infty$ in parts \ref{p:stronglysplit1i} (or 
\ref{p:stronglysplit1ii}) and \ref{p:stronglysplit1iiv} of 
 Proposition~\ref{p:stronglysplit1} we obtain 
as a direct consequence the following result for EA and PPA in 
the strongly monotone case. The Lipschitz continuous constant
of  EA obtained in \cite[Fact~7]{Yin20} with a geometric proof is 
complemented with analytic arguments in the proof of 
Proposition~\ref{p:stronglysplit1}. The constant of PPA is
proved in 
\cite[Proposition~23.13]{bauschke2011:Convex_analysis}.
\begin{proposition}
\label{p:3}
Suppose that $\mathcal{A}\in\mathcal{C}_{\alpha}$ is $\rho$-strongly 
monotone, for some $\alpha\in\RPP$ and
$\rho\in\left]0,\alpha^{-1}\right]$. Then the following hold.
\begin{enumerate}
\item \label{p:3ii} For every
		$\tau\in\left]0,2\alpha\right[$, $G_{\tau 
		\mathcal{A}}$ is $\omega_{G_0}(\tau)$-Lipschitz 
		continuous, where
\begin{equation}
\label{e:defG0}
\omega_{G_0}:=\sqrt{1-\frac{\tau\rho}{\alpha}
(2\alpha-\tau)}\in\left]0,1\right[.
\end{equation}

\item\label{p:3i} 
For every $\tau >0$,
$J_{\tau\mathcal{A}}$ is 
$\omega_J(\tau)$-Lipschitz continuous, 
where 
\begin{equation}
\omega_J(\tau):=\dfrac{1}{1+\tau\rho}\in\left]0,1\right[.
\end{equation}
	\end{enumerate}
\end{proposition}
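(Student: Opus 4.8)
The plan is to prove the two items directly from cocoercivity and strong monotonicity rather than by passing to the limit $\beta\to+\infty$ in Proposition~\ref{p:stronglysplit1}, since the single-operator computations are short and self-contained. (One may also simply note that the formulas are the $\mathcal{B}=0$ specializations of parts~\eqref{p:stronglysplit1ii} and~\eqref{p:stronglysplit1iiv}: indeed $J_{\tau\cdot 0}=\Id$, so $T_{0,\tau\mathcal{A}}=G_{\tau\mathcal{A}}$, and the limiting constants coincide with $\omega_{G_0}$ and $\omega_J$.) Throughout I fix $x,y\in\HH$ and abbreviate $u=x-y$ and $v=\mathcal{A}x-\mathcal{A}y$, so that cocoercivity reads $\scal{u}{v}\ge\alpha\|v\|^2$ and strong monotonicity reads $\scal{u}{v}\ge\rho\|u\|^2$.

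For item~\eqref{p:3ii}, I would expand
\begin{equation}
\|G_{\tau\mathcal{A}}x-G_{\tau\mathcal{A}}y\|^2=\|u-\tau v\|^2=\|u\|^2-2\tau\scal{u}{v}+\tau^2\|v\|^2.
\end{equation}
Bounding $\tau^2\|v\|^2\le(\tau^2/\alpha)\scal{u}{v}$ via cocoercivity gives
\begin{equation}
\|u-\tau v\|^2\le\|u\|^2-\tau\Big(2-\frac{\tau}{\alpha}\Big)\scal{u}{v}.
\end{equation}
The key point is the hypothesis $\tau<2\alpha$, which makes the factor $2-\tau/\alpha$ strictly positive; only then may I replace $\scal{u}{v}$ by its lower bound $\rho\|u\|^2$ without reversing the inequality, obtaining $\|u-\tau v\|^2\le(1-\frac{\tau\rho}{\alpha}(2\alpha-\tau))\|u\|^2=\omega_{G_0}(\tau)^2\|u\|^2$.

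For item~\eqref{p:3i}, recall that $\mathcal{A}\in\mathcal{C}_\alpha$ with $\alpha>0$ is maximally monotone, so $J_{\tau\mathcal{A}}$ is single-valued and everywhere defined. Writing $p=J_{\tau\mathcal{A}}x$ and $q=J_{\tau\mathcal{A}}y$, the resolvent identity gives $\tau^{-1}(x-p)=\mathcal{A}p$ and $\tau^{-1}(y-q)=\mathcal{A}q$, whence strong monotonicity of $\mathcal{A}$ yields
\begin{equation}
\tau^{-1}\scal{(x-y)-(p-q)}{p-q}=\scal{\mathcal{A}p-\mathcal{A}q}{p-q}\ge\rho\|p-q\|^2.
\end{equation}
Rearranging produces $\scal{x-y}{p-q}\ge(1+\tau\rho)\|p-q\|^2$, and Cauchy--Schwarz then gives $(1+\tau\rho)\|p-q\|\le\|x-y\|$, i.e.\ the claimed Lipschitz constant $\omega_J(\tau)=1/(1+\tau\rho)$.

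There is no genuine obstacle here, since both statements reduce to a single monotone inequality; the only care needed is bookkeeping of the admissible range of $\tau$ and the verification that the rates lie in $\left]0,1\right[$. For $\omega_J$ this is immediate from $\tau\rho>0$. For $\omega_{G_0}$, positivity of $\frac{\tau\rho}{\alpha}(2\alpha-\tau)$ on $\left]0,2\alpha\right[$ gives $\omega_{G_0}<1$, while the standing assumption $\rho\le\alpha^{-1}$ gives the estimate $\frac{\tau\rho}{\alpha}(2\alpha-\tau)\le\rho\alpha\le1$ (the quadratic $\tau(2\alpha-\tau)$ being maximized at $\tau=\alpha$), so $\omega_{G_0}\ge0$, and in fact $\omega_{G_0}\in\left]0,1\right[$ away from the degenerate case $\rho\alpha=1,\ \tau=\alpha$. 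The single subtle step is the sign control in item~\eqref{p:3ii}: the cocoercivity bound and the strong-monotonicity bound may be chained on $\scal{u}{v}$ in the same direction only because $\tau<2\alpha$, which is precisely why that threshold appears in the hypothesis.
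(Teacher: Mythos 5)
Your proof is correct and is essentially the paper's argument written out directly: the paper obtains this proposition by specializing Proposition~\ref{p:stronglysplit1} to $\mathcal{B}=0$ ($\beta\to+\infty$) and by citing the standard resolvent estimate, and your two computations are exactly what those reduce to (your chaining of cocoercivity and strong monotonicity on $\scal{u}{v}$ is algebraically the appendix's convex-combination trick with $\lambda=\tau/(2\alpha)$, and your resolvent argument is the cited proof of \cite[Proposition~23.13]{bauschke2011:Convex_analysis}). Your remark that $\omega_{G_0}(\tau)=0$ in the degenerate case $\rho\alpha=1$, $\tau=\alpha$ is a fair observation about the stated open interval, but it is harmless for the Lipschitz claim.
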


\begin{remark}
	\label{rem:compsplitornot}
Observe that $\mathcal{A}+\mathcal{B}$
is $\beta\alpha/(\beta+\alpha)$-cocoercive 
\cite[Proposition~4.12]{bauschke2011:Convex_analysis} and 
$\rho$-strongly monotone.
Moreover, for every $\tau\in\left]0,2\beta\alpha/(\beta+\alpha)\right[$ 
we have
\begin{equation}
\dfrac{\tau\rho}{\beta\alpha}
\big(2\beta\alpha-\tau(\beta+\alpha)\big)< 
\frac{2\tau\rho}{\alpha(2\beta-\tau)}
\big(2\beta\alpha-\tau(\beta+\alpha)\big).
\end{equation}
Therefore $\omega_{G}$ defined in 
\eqref{e:constantG2strong} 
is strictly lower than $\omega_{G_0}$ in \eqref{e:defG0}. 
Moreover, in the case when $\mathcal{B}=0$ 
($\beta\to \infty$), both functions coincide. This new 
result implies that the gradient operator takes advantage of 
the splitting when a part of the monotone inclusion is strongly 
monotone.
\end{remark}

\begin{remark}
In the absence of strong monotonicity ($\rho=0$), EA, FBS, PRS, 
and 
DRS generate weakly convergent sequences. Indeed, even if 
the 
associated fixed point operators are no longer strict contractions, 
Proposition~\ref{p:2} in the appendix asserts that they are 
averaged 
nonexpansive operators, i.e., there exists $\mu\in\left]0,1\right[$ 
such 
that
\begin{equation}
(\forall x\in\HH)(\forall y\in\HH)\quad 
\|\Phi x-\Phi y\|^2\le	\|x-y\|^2-
\left(\dfrac{1-\mu}{\mu}\right)\|(\Id-\Phi)x-(\Id-\Phi)y\|^2.
\end{equation}
The weak convergence hence follows from 
\cite[Proposition~5.16]{bauschke2011:Convex_analysis}.
Note that, in the cocoercive context, the averaged nonexpansive 
property for the fixed point operator associated {with} PRS is a 
new 
result.
\end{remark}

\begin{figure*}[t]
\centering
	\begin{tabular}{cc}
		\includegraphics[width = 6cm]{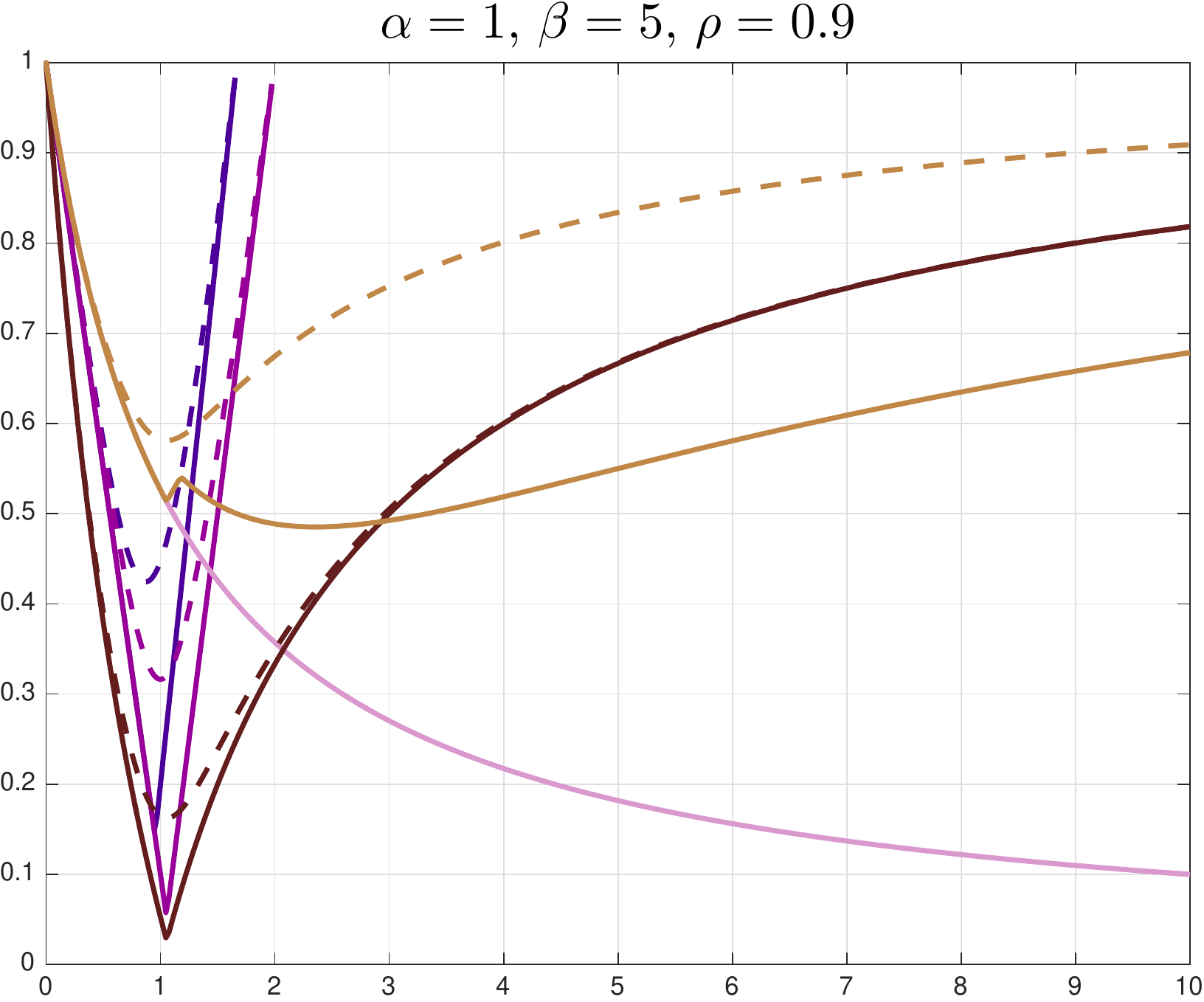} 
		& \includegraphics[width = 
		6cm]{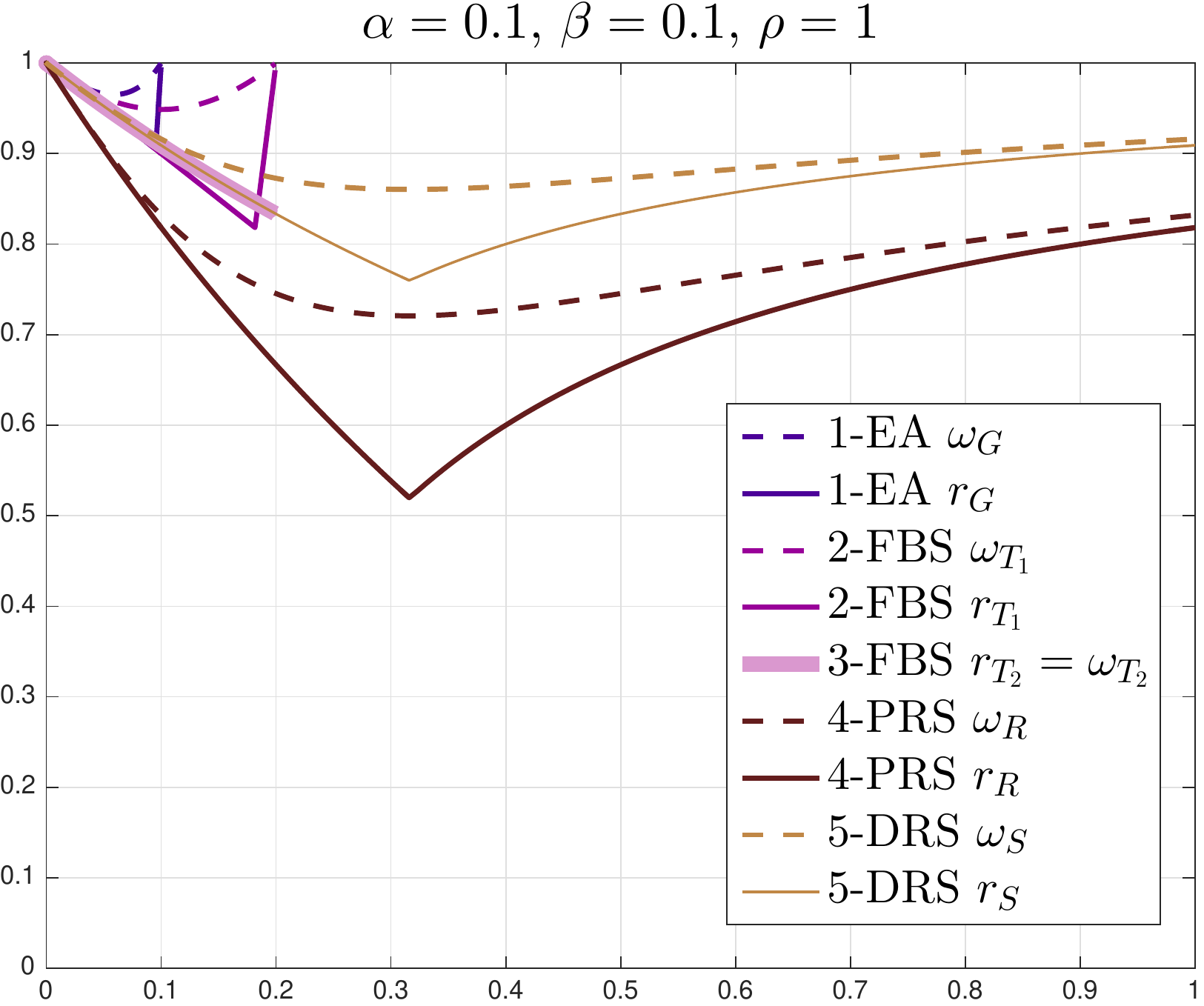}\\
		{(a) Rate vs step-size $\tau$} & {(b) Rate vs step-size $\tau$}
	\end{tabular}
	\caption{{\small 
		Comparison of the convergence rates of EA, FBS, PRS, DRS 
		obtained in Proposition~\ref{p:stronglysplit1opt} (continuous 
		lines) and 
		Proposition~\ref{p:stronglysplit1} (dashed 
		lines) for two choices of $\alpha$, $\beta$, and 
		$\rho$. Note that optimization rates are better than cocoercive 
		rates in general.\label{fig:ratecocoopt}}}
\end{figure*}

\section{Smooth convex optimization}
\label{sec:opti}
In this section we restrict our attention to the following particular 
instance of Problem~\ref{prob:mainsplit}.
\begin{problem}
	\label{prob:mainsplitopt}
Let $(\alpha,\beta)\in\RPP^2$, let $\rho\in\left]0,\alpha^{-1}\right]$, 
let $f\in\mathscr{C}_{1/\alpha}^{1,1}(\HH)$ be $\rho$-strongly convex, 
and let
	$g\in\mathscr{C}_{1/\beta}^{1,1}(\HH)$.
	The problem is to 
	\begin{equation}
		\minimize{x\in\HH}{f(x)+g(x)},
	\end{equation}
	under the assumption that solutions exist.
\end{problem} 

In the context of Problem~\ref{prob:mainsplitopt}, there exists a 
unique solution to 
Problem~\ref{prob:mainsplitopt}, which is denoted by $\widehat{x}$.
Since $\mathcal{A}=\nabla f$ is cocoercive and strongly monotone,
Proposition~\ref{p:stronglysplit1} provides Lipschitz constants
of the operators governing the numerical schemes under study. 
However, in the 
optimization setting the Lipschitz constants can be improved, as 
Proposition~\ref{p:stronglysplit1opt} below asserts. Next, we 
compare the convergence rates and we provide regions depending on 
the parameters $\alpha$, $\beta$, and $\rho$ defining the most 
efficient algorithm in {the} worst-case scenario.
\subsection{Linear convergence rates}
The following result is a refinement of 
Proposition~\ref{p:stronglysplit1},
in which the Lipschitz constants are improved by using the convex 
optimization structure of the problem.  {All the linear convergence 
rates, optimal step-sizes, and associated optimal rates are 
summarized in Table~\ref{tab:sumrates}.}

\begin{proposition}
\label{p:stronglysplit1opt}
Let $\tau>0$. In the context of Problem~\ref{prob:mainsplitopt}, the 
following hold:
\begin{enumerate}
\item 
\label{p:stronglysplit1opti}
Suppose that 
$\tau\in\left]0,2\beta\alpha/(\beta+\alpha)\right[$. Then,
$G_{\tau(\nabla g+\nabla f)}$ is 
${r}_{G}(\tau)$-Lipschitz 
continuous, where
\begin{equation}
\label{e:constantG2strongopt}
{r}_{G}(\tau):=
\max\big\{|1-\tau\rho|,|1-\tau(\beta^{-1}+\alpha^{-1})|\big\}\in\left]0,1\right[.
\end{equation}
In particular, the minimum in \eqref{e:constantG2strongopt} is 
achieved at
\begin{equation}
\label{eq:taugrad}
\tau^*=\dfrac{2}{\rho+\alpha^{-1}+\beta^{-1}}\quad\text{and}\quad
{r}_{G}(\tau^*)
=\dfrac{\alpha^{-1}+\beta^{-1}-\rho}
{\alpha^{-1}+\beta^{-1}+\rho}.
\end{equation}
\item 
\label{p:stronglysplit1optii}
Suppose that $\tau\in\left]0,2\alpha\right[$. Then
$T_{\tau\nabla g,\tau\nabla f}$ is $r_{T_1}(\tau)$-Lipschitz 
continuous, where
\begin{equation}
\label{e:constgradreduced}
r_{T_1}(\tau):=\max\big\{|1-\tau\rho|,|1-\tau\alpha^{-1}|\big\}\in\left]0,1\right[.
\end{equation}
In particular, the minimum in \eqref{e:constgradreduced} is 
achieved at
\begin{equation}
\label{eq:taufb2}
\tau^*=\dfrac{2}{\rho+\alpha^{-1}}\quad
\text{ and }\quad {r}_{T_1}(\tau^*)
=\dfrac{\alpha^{-1}-\rho}
{\alpha^{-1}+\rho}.
\end{equation}
\item 
\label{p:stronglysplit1optiiv}
Suppose that $\tau\in\left]0,2\beta\right]$. Then
$T_{\tau\nabla f,\tau\nabla g}$ is ${r}_{T_2}(\tau)$-Lipschitz 
continuous, where
\begin{equation}
\label{e:consFB2}
{r}_{T_2}(\tau):=\dfrac{1}{1+\tau\rho}\in\left]0,1\right[.
\end{equation}
In particular,  the minimum in \eqref{e:consFB2} is 
achieved at
\begin{equation}
\label{eq:taufb3}
\tau^*=2\beta\quad
\text{ and }\quad {r}_{T_2}(\tau^*)
=\dfrac{1}{1+2\beta\rho}.
\end{equation}
\item
\label{p:stronglysplit1optiii}
$R_{\tau\nabla g,\tau\nabla f}$ and 
$R_{\tau\nabla f,\tau\nabla g}$ are $r_R(\tau)$-Lipschitz 
continuous, where 
\begin{equation}
\label{e:constPR}
r_R(\tau)=\max\left\{\dfrac{1-\tau\rho}{1+\tau\rho},
\dfrac{\tau\alpha^{-1}-1}{\tau\alpha^{-1}+1}\right\}
\in\left]0,1\right[.
\end{equation}
In particular, the minimum in \eqref{e:constPR} is 
achieved at
\begin{equation}
\label{eq:taupropti}
\tau^*=\sqrt{\frac{\alpha}{\rho}}\quad
\text{ and }\quad {r}_{R}(\tau^*)
={\dfrac{1-\sqrt{\alpha\rho}}{1+\sqrt{\alpha\rho}}}.
\end{equation}
\item
\label{p:stronglysplit1optiv}
$S_{\tau\nabla g,\tau\nabla f}$ and 
$S_{\tau\nabla f,\tau\nabla g}$ are $r_S(\tau)$-Lipschitz 
continuous, where
\begin{equation}
\label{e:constDR}
r_S(\tau)=\min\left\{\frac{1+r_R(\tau)}{2},\dfrac{\beta+\tau^2\rho}
{\beta+\tau\beta\rho+\tau^2\rho}\right\}
\in\left]0,1\right[
\end{equation}
and $r_R$ is defined in \eqref{e:constPR}.
In particular, the optimal step-size and the minimum in 
\eqref{e:constDR} are
\begin{equation}
	\label{eq:taudr}
(\tau^*,r_{S}(\tau^*))=
\begin{cases}
\left(\sqrt{\frac{\alpha}{\rho}},\frac{1}{1+\sqrt{\alpha\rho}}\right),\quad&\text{if}\:\:\beta\leq
{4\alpha};\\
\left(\sqrt{\frac{\beta}{\rho}},\frac{2}{2+\sqrt{\beta\rho}}\right),\quad&\text{otherwise}.
\end{cases}
	\end{equation}
\end{enumerate}
\end{proposition}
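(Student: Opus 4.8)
The plan is to derive every bound from a single two-sided interpolation inequality for smooth strongly convex functions and then to reduce each operator to a forward step, a resolvent, or a reflected resolvent. Applying Proposition~\ref{p:BH} to the convex function $f-\tfrac{\rho}{2}\|\cdot\|^2\in\mathscr{C}^{1,1}_{\alpha^{-1}-\rho}(\HH)$ shows that $\nabla f-\rho\,\Id$ is $(\alpha^{-1}-\rho)^{-1}$-cocoercive, which rearranges into
\begin{equation}
(\forall x,y\in\HH)\quad \scal{\nabla f(x)-\nabla f(y)-\rho(x-y)}{\nabla f(x)-\nabla f(y)-\alpha^{-1}(x-y)}\le 0.
\end{equation}
Writing $u=x-y$ and $v=\nabla f(x)-\nabla f(y)$, this reads $\|v\|^2\le(\rho+\alpha^{-1})\scal{u}{v}-\rho\alpha^{-1}\|u\|^2$, and is complemented by $\rho\|u\|^2\le\scal{u}{v}\le\alpha^{-1}\|u\|^2$; the same holds for $g$ with $\rho$ replaced by $0$ and $\alpha$ by $\beta$. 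This is precisely the extra structure that the cocoercive analysis of Proposition~\ref{p:stronglysplit1} cannot exploit, and it is what sharpens the square-root rates into the piecewise-linear ones stated here.

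First I would prove a forward-step lemma: for $\nabla f$ as above, $\Id-\tau\nabla f$ is $\max\{|1-\tau\rho|,|1-\tau\alpha^{-1}|\}$-Lipschitz. Expanding $\|u-\tau v\|^2=\|u\|^2-2\tau\scal{u}{v}+\tau^2\|v\|^2$ and substituting the interpolation bound on $\|v\|^2$ leaves a single term $\tau(\tau(\rho+\alpha^{-1})-2)\scal{u}{v}$; bounding $\scal{u}{v}$ below by $\rho\|u\|^2$ when $\tau\le 2/(\rho+\alpha^{-1})$ and above by $\alpha^{-1}\|u\|^2$ otherwise collapses the right-hand side to exactly $(1-\tau\rho)^2\|u\|^2$ or $(1-\tau\alpha^{-1})^2\|u\|^2$. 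Parts~\eqref{p:stronglysplit1opti}--\eqref{p:stronglysplit1optiiv} then follow by composition: for EA apply the lemma to $\nabla(f+g)$, which is $\rho$-strongly monotone with $(\alpha^{-1}+\beta^{-1})$-Lipschitz gradient; for $T_{\tau\nabla g,\tau\nabla f}$ compose the forward step on $\nabla f$ with the nonexpansive resolvent $J_{\tau\nabla g}$; and for $T_{\tau\nabla f,\tau\nabla g}$ use that for $\tau\in\left]0,2\beta\right]$ the forward step on $\nabla g$ is nonexpansive while $J_{\tau\nabla f}$ is $(1+\tau\rho)^{-1}$-Lipschitz by Proposition~\ref{p:3}\eqref{p:3i}.

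For PRS (Part~\eqref{p:stronglysplit1optiii}) I would estimate the reflected resolvent $2J_{\tau\nabla f}-\Id$. Setting $p=J_{\tau\nabla f}x$, $q=J_{\tau\nabla f}y$, $a=p-q$, $b=\nabla f(p)-\nabla f(q)$ gives $x-y=a+\tau b$ and $(2J_{\tau\nabla f}-\Id)x-(2J_{\tau\nabla f}-\Id)y=a-\tau b$, so the claim becomes $\|a-\tau b\|^2\le r_R(\tau)^2\|a+\tau b\|^2$. This is the technical heart: I would show that the quadratic expression $\|a-\tau b\|^2-r_R(\tau)^2\|a+\tau b\|^2$ is dominated by a nonnegative multiple of the interpolation inequality in $(a,b)$ (an S-procedure argument in the Gram variables $\|a\|^2$, $\scal{a}{b}$, $\|b\|^2$), which both certifies nonpositivity and shows the worst case is the scalar one $b=\lambda a$ with $\lambda\in[\rho,\alpha^{-1}]$, where $\|a-\tau b\|/\|a+\tau b\|=|1-\tau\lambda|/(1+\tau\lambda)$ is maximized at the endpoints. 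Since $2J_{\tau\nabla g}-\Id$ is merely nonexpansive, both composition orderings $R_{\tau\nabla g,\tau\nabla f}$ and $R_{\tau\nabla f,\tau\nabla g}$ inherit the constant $r_R(\tau)$.

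For DRS (Part~\eqref{p:stronglysplit1optiv}) the first entry of the minimum follows from $S=(\Id+R)/2$ and the triangle inequality, giving $(1+r_R(\tau))/2$; the second entry is the refined estimate exploiting the $\beta$-cocoercivity of $\nabla g$ jointly with the strong monotonicity of $\nabla f$, obtained from the interpolation inequalities of both operators as in \cite[Theorem~5.6]{Giselsson2017}. Finally, the optimal step-sizes and rates are elementary: in Parts~\eqref{p:stronglysplit1opti},~\eqref{p:stronglysplit1optii},~\eqref{p:stronglysplit1optiii} one balances the two branches of the max, in Part~\eqref{p:stronglysplit1optiiv} the bound is decreasing in $\tau$ so $\tau^*=2\beta$, and in Part~\eqref{p:stronglysplit1optiv} one minimizes each branch separately---$(1+r_R)/2$ at $\tau=\sqrt{\alpha/\rho}$ and, by the AM--GM bound $\beta/\tau+\tau\rho\ge 2\sqrt{\beta\rho}$, the second branch at $\tau=\sqrt{\beta/\rho}$---and compares them, the threshold $\beta\le 4\alpha$ deciding which branch attains the overall minimum. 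The main obstacle is the S-procedure / worst-case reduction underlying the reflected-resolvent bound in Parts~\eqref{p:stronglysplit1optiii}--\eqref{p:stronglysplit1optiv}; the forward-step parts are routine once the interpolation inequality is in hand.
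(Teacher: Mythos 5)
Your proposal is correct and follows essentially the same route as the paper: your ``interpolation inequality'' for $\nabla f$ is exactly the cocoercivity of $\nabla f-\rho\,\Id$ that the paper exploits by writing $G_{\tau\nabla h}=(1-\tau\rho)\Id-\tau\nabla\phi$ with $\phi=h-\tfrac{\rho}{2}\|\cdot\|^2$, and your treatment of the two forward--backward orderings (composition with a nonexpansive resolvent, resp.\ a nonexpansive gradient step, plus Proposition~\ref{p:3}\eqref{p:3i}) matches the paper's proof verbatim. The only divergence is in parts~\eqref{p:stronglysplit1optiii}--\eqref{p:stronglysplit1optiv}, where the paper simply cites \cite{BoydGiss} and \cite{Giselsson2017} while you sketch the underlying S-procedure/worst-case-scalar reduction for the reflected resolvent --- which is precisely the argument used in those references, so nothing is lost or gained beyond self-containedness.
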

%\begin{figure*}[h!]
%	\hspace{-2cm}\includegraphics[width = 21cm]{}
%	\caption{Convergence rate (Lipschitz continuous constant) 
%provided 
%	in 	Proposition~\ref{p:stronglysplit1opt} 
%	w.r.t. the step-size parameter $\tau$ for different choices of 
%	$\alpha$, $\beta$, and 
%	$\rho$.\label{fig:ratecoco}}
%\end{figure*}

\begin{table}
\hspace{-0.2cm}\begin{tabular}{|c|c|c|c|}
\hline
Algorithm & Linear convergence rate & 
Optimal step-size & 
Optimal linear \\
 &  as a 
function of $\tau$&  &  convergence rate  \\
\hline
\hline
 EA with & 
 $\max\big\{|1-\tau\rho|,|1-\tau(\beta^{-1}+\alpha^{-1})|\big\}$ & 
 $\dfrac{2}{\rho+\alpha^{-1}+\beta^{-1}}$ & 
 $\dfrac{\alpha^{-1}+\beta^{-1}-\rho}
{\alpha^{-1}+\beta^{-1}+\rho}$\\
 $\tau\in\left]0,2/(\alpha^{-1}+\beta^{-1})\right[$& & 
&\\
\hline
FBS with $\prox_g$ and $\nabla f$ & 
$\max\big\{|1-\tau\rho|,|1-\tau\alpha^{-1}|\big\}$ & 
$\dfrac{2}{\rho+\alpha^{-1}}$ & $\dfrac{\alpha^{-1}-\rho}
{\alpha^{-1}+\rho}$\\
 and $\tau\in\left]0,2\alpha\right[$ & & &\\
\hline
FBS with $\prox_f$ and $\nabla g$ & $\dfrac{1}{1+\tau\rho}$ & 
$2 \beta$ & $\dfrac{1}{1+2 \beta\rho}$ \\
  and $\tau\in\left]0,2\beta\right]$ & & &\\
\hline
PRS with $\tau>0$ & $\max\left\{\dfrac{1-\tau\rho}{1+\tau\rho},
\dfrac{\tau\alpha^{-1}-1}{\tau\alpha^{-1}+1}\right\}$ & 
$\dfrac{1}{1+2 \beta\rho}$ & 
${\dfrac{1-\sqrt{\alpha\rho}}{1+\sqrt{\alpha\rho}}}$\\
 & & &\\
\hline
DRS with $\tau>0^{*}$ & 
$\min\left\{\frac{1+r_R(\tau)}{2},\dfrac{\beta+\tau^2\rho}
{\beta+\tau\beta\rho+\tau^2\rho}\right\}$ &
$\begin{cases}
\sqrt{\frac{\alpha}{\rho}},\quad&\text{if}\:\:\beta\leq
{4\alpha};\\
\sqrt{\frac{\beta}{\rho}},\quad&\text{otherwise}.
\end{cases}$&
$\begin{cases}
\frac{1}{1+\sqrt{\alpha\rho}},\quad&\text{if}\:\:\beta\leq
{4\alpha};\\
\frac{2}{2+\sqrt{\beta\rho}},\quad&\text{otherwise}.
\end{cases}$
\\
%& & &\\
\hline
\end{tabular}
\caption{Convergence rates and optimal step-sizes 
for several {first-order} schemes when considering the 
minimization 
problem $\min_{x\in \HH} f(x) + g(x)$, where $f$ (resp. $g$) is 
convex differentiable with $\alpha^{-1}$ (resp. 
$\beta^{-1}$)-Lipschitz gradient and $f$ is $\rho$-strongly 
convex. $^*$ The results on DRS are new.  \label{tab:sumrates}}
\end{table}

\noindent 
The Lipschitz 
{constants}  
of the operators $G_{\nabla g+\nabla f}$ and $T_{\nabla g,\nabla 
f}$ are {a} consequence of \cite[Theorem~3.1]{Taylor18} (see 
also 
\cite[Fact~3]{Yin20} for a geometric interpretation). We provide an 
alternative shorter and more direct proof of 
Proposition~\ref{p:stronglysplit1opt}\eqref{p:stronglysplit1opti}-\eqref{p:stronglysplit1optii}
 in Appendix~\ref{app:stronglysplit1opt}, in which we use some 
techniques from \cite[Section~2.1.3]{Nesterov}. The Lipschitz 
constant of $T_{\nabla f,\nabla g}$ is a direct consequence of 
Proposition~\ref{p:stronglysplit1}\eqref{p:stronglysplit1iiv} and
\eqref{e:constPR} 
is obtained in \cite[Theorem~2]{BoydGiss}, 
which improves several constants in the literature. The Lipschitz 
constant in \eqref{e:constDR} is obtained by combining 
\cite[Theorem~2]{BoydGiss} and \cite[Theorem~5.6]{Giselsson2017}.
\begin{remark}
\begin{enumerate}
\item {When $\rho\approx 0$, 
\eqref{eq:taufb2}
	justifies} the classical choice $\tau^*\approx 2\alpha$. This 
case arises naturally in several inverse problems and, in particular,
in sparse image restoration which is studied in detail in 
Section~\ref{sec:IIIC}.

\item Note that the Lipschitz continuous constants obtained in 
Proposition~\ref{p:stronglysplit1opt}\eqref{p:stronglysplit1opti}
and \ref{p:stronglysplit1opt}\eqref{p:stronglysplit1optii} are strictly 
lower than the constants 
obtained in 
Proposition~\ref{p:stronglysplit1}\eqref{p:stronglysplit1i}
and \ref{p:stronglysplit1}\eqref{p:stronglysplit1ii} in the cocoercive 
case, as it can be 
verified in Figure~\ref{fig:ratecocoopt}.
\item 
From Figure~\ref{fig:ratecocoopt}, we observe the benefit of 
the refinement of convergence rates in the optimization framework 
(dashed line)
with respect to the cocoercive case (solid line) in
all methods at exception of $T_{\tau\nabla f,\tau\nabla g}$, whose 
rate is the same. We also 
observe that in general Peaceman-Rachford iterations $R_{\tau\nabla 
g,\tau\nabla f}$ has the better convergence rate for several 
configurations of $(\alpha,\beta,\rho)$.\\
\end{enumerate}
\end{remark}

In the case when  $g=0\in\mathscr{C}_0^{1,1}(\HH)$,
Problem~\ref{prob:mainsplitopt} reduces to minimize $f$ over $\HH$
and $G_{\nabla g+\nabla f}=
T_{\nabla g,\nabla f}=G_{\nabla f}$
and $T_{\nabla f,\nabla g}=S_{\nabla f,\nabla g}=S_{\nabla g,\nabla f}
=\prox_{f}$.
Therefore, by taking $\beta\to+\infty$ in 
Proposition~\ref{p:stronglysplit1opt}, we recover 
the following known results
(see also \cite[Proposition~5.2]{Giselsson2017} and 
\cite[Proposition~4.39]{bauschke2011:Convex_analysis}).

\begin{proposition}
	\label{p:B0optst}
Let $\tau\in\RPP$, $\alpha\in\RPP$, 
$\rho\in\left]0,\alpha^{-1}\right]$, and suppose that 
$f\in\mathscr{C}^{1,1}_{1/\alpha}(\HH)$ and that $f$ is $\rho$-strongly 
convex. Then, the following hold.
\begin{enumerate}
\item \label{p:B0optsti} Suppose that $\tau\in\left]0,2\alpha\right[$. 
Then $G_{\tau\nabla f}$ is 
$r_{G_0}(\tau)$-Lipschitz 
continuous, where 
\begin{equation}
r_{G_0}(\tau):=\max\big\{|1-\tau\rho|,
|1-\tau\alpha^{-1}|\big\}\in\left]0,1\right[.
\end{equation}
\item
\label{p:B0optstii} $\prox_{\tau f}$ is ${r}_J(\tau)$-Lipschitz 
continuous, 
where 
\begin{equation}
{r}_{J}(\tau):=\dfrac{1}{1+\tau\rho}\in\left]0,1\right[.
\end{equation}
\end{enumerate}
\end{proposition}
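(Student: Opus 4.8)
The plan is to obtain both bounds as immediate specializations of Proposition~\ref{p:stronglysplit1opt}, exploiting that the choice $g=0$ trivializes every operator built from $\nabla g$. Indeed, since $\nabla g=0$ we have $G_{\tau\nabla g}=\Id-\tau\nabla g=\Id$ and $J_{\tau\nabla g}=\prox_{\tau g}=\Id$, so that, recalling the definition of the forward--backward operator in \eqref{e:defFB},
\begin{equation}
G_{\tau(\nabla g+\nabla f)}=T_{\tau\nabla g,\tau\nabla f}=G_{\tau\nabla f}
\quad\text{and}\quad
T_{\tau\nabla f,\tau\nabla g}=\prox_{\tau f}.
\end{equation}
Moreover $g=0\in\mathscr{C}_0^{1,1}(\HH)\subseteq\mathscr{C}_{1/\beta}^{1,1}(\HH)$ for every $\beta\in\RPP$, so the hypotheses of Problem~\ref{prob:mainsplitopt} are satisfied for each such $\beta$ and Proposition~\ref{p:stronglysplit1opt} is applicable for arbitrary $\beta>0$.

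For part \ref{p:B0optsti} I would invoke Proposition~\ref{p:stronglysplit1opt}\eqref{p:stronglysplit1optii}: for any fixed $\beta>0$, the operator $T_{\tau\nabla g,\tau\nabla f}=G_{\tau\nabla f}$ is $r_{T_1}(\tau)=\max\{|1-\tau\rho|,|1-\tau\alpha^{-1}|\}$-Lipschitz continuous on $\tau\in\left]0,2\alpha\right[$, and $r_{T_1}=r_{G_0}$, which is exactly the assertion. Equivalently, one may start from Proposition~\ref{p:stronglysplit1opt}\eqref{p:stronglysplit1opti} and let $\beta\to+\infty$: since $\beta^{-1}\to 0$, the admissible interval $\left]0,2\beta\alpha/(\beta+\alpha)\right[$ increases to $\left]0,2\alpha\right[$ and $r_G(\tau)=\max\{|1-\tau\rho|,|1-\tau(\beta^{-1}+\alpha^{-1})|\}$ converges to $\max\{|1-\tau\rho|,|1-\tau\alpha^{-1}|\}=r_{G_0}(\tau)$.

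For part \ref{p:B0optstii} the cleanest route is Proposition~\ref{p:3}\eqref{p:3i} applied to $\mathcal{A}=\nabla f$: by Proposition~\ref{p:BH}, $f\in\mathscr{C}_{1/\alpha}^{1,1}(\HH)$ yields $\nabla f\in\mathcal{C}_\alpha$, while $\rho$-strong convexity of $f$ makes $\nabla f$ $\rho$-strongly monotone; hence $\prox_{\tau f}=J_{\tau\nabla f}$ is $1/(1+\tau\rho)$-Lipschitz continuous for every $\tau>0$. Alternatively, Proposition~\ref{p:stronglysplit1opt}\eqref{p:stronglysplit1optiiv} gives $r_{T_2}(\tau)=1/(1+\tau\rho)$ on $\tau\in\left]0,2\beta\right]$ for each $\beta>0$, and since this bound does not depend on $\beta$ and $\beta$ may be taken arbitrarily large, the estimate extends to all $\tau>0$.

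These are straightforward specializations, so no genuine obstacle arises; the only point requiring care is the bookkeeping across the limit, namely checking that both the step-size intervals and the Lipschitz constants of Proposition~\ref{p:stronglysplit1opt} pass correctly to the regime $g=0$. In particular, one must observe that the $\beta$-dependent interval $\left]0,2\beta\right]$ appearing in part \eqref{p:stronglysplit1optiiv} exhausts $\RPP$ as $\beta\to+\infty$, which is precisely what removes the step-size restriction and produces the constraint-free statement for $\prox_{\tau f}$.
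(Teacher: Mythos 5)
Your argument is correct and is essentially the paper's own: the authors also obtain this proposition by observing that $g=0$ collapses $G_{\tau(\nabla g+\nabla f)}=T_{\tau\nabla g,\tau\nabla f}=G_{\tau\nabla f}$ and $T_{\tau\nabla f,\tau\nabla g}=\prox_{\tau f}$ and then letting $\beta\to+\infty$ in Proposition~\ref{p:stronglysplit1opt} (indeed, the paper's proof of Proposition~\ref{p:stronglysplit1opt}\eqref{p:stronglysplit1optii} already contains the $\beta^{-1}=0$ specialization you use for part \ref{p:B0optsti}, and your alternative route to part \ref{p:B0optstii} via Proposition~\ref{p:3}\eqref{p:3i} is the same resolvent fact the paper cites). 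Your bookkeeping of the step-size intervals and the $\beta$-independence of the constants is accurate, so nothing is missing.
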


\subsection{Comparison of algorithms}
Since 
Problem~\ref{prob:mainsplitopt} is equivalent to minimize $\alpha 
f+\alpha g$ over $\HH$, we can assume $\alpha=1$. Set 
$\Omega=\RPP\times\left]0,1\right[$ and denote 
\begin{equation}
\label{e:defoptrate}
\begin{cases}
r_G^*(\beta,\rho)=\frac{1+\beta^{-1}-\rho}{1+\beta^{-1}-\rho}\\
r_{T_1}^*(\rho)=\frac{1-\rho}{1+\rho}\\
r_{T_2}^*(\beta,\rho)=\frac{1}{1+2\beta \rho}\\
r_{R}^*(\rho)=\frac{1-\sqrt{\rho}}{1+\sqrt{\rho}}\\
r_{S}^*(\beta,\rho)=
\begin{cases}
\frac{1}{1+\sqrt{\rho}},&\text{if}\:\:\beta\le 4;\\
\frac{2}{2+\sqrt{\beta\rho}},&\text{if}\:\:\beta> 4.
\end{cases}
\end{cases}
\end{equation}
Observe that $r_G^*(\beta,\rho)=r_G(\tau^*)$, 
$r_{T_1}^*(\rho)=r_{T_1}(\tau^*)$, 
$r_{T_2}^*(\beta,\rho)=r_{T_2}(\tau^*)$, $r_{R}^*(\rho)=r_{R}(\tau^*)$,
and $r_{S}^*(\beta,\rho)=r_{S}(\tau^*)$ are the optimal rates 
obtained in Proposition~\ref{p:stronglysplit1opt} when $\alpha=1$.
We have the following comparisons.
\begin{lemma}
\label{l:1}
Let $(\beta,\rho)\in\Omega$. Then
$r_G^*(\beta,\rho)>r_{T_1}^*(\rho)>r_{R}^*(\rho)$.
\end{lemma}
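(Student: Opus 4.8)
The plan is to treat the two strict inequalities separately; each one reduces to the monotonicity of an elementary one-variable rational function, so no genuinely hard step is involved.

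For the left inequality $r_G^*(\beta,\rho)>r_{T_1}^*(\rho)$, I would first record that, with $\alpha=1$, the optimal rate of the explicit algorithm coming from \eqref{eq:taugrad} reads
\begin{equation}
r_G^*(\beta,\rho)=\frac{1+\beta^{-1}-\rho}{1+\beta^{-1}+\rho},
\end{equation}
so that both $r_G^*(\beta,\rho)$ and $r_{T_1}^*(\rho)=(1-\rho)/(1+\rho)$ are values of the single function $\phi\colon t\mapsto(t-\rho)/(t+\rho)$, evaluated at $t=1+\beta^{-1}$ and at $t=1$, respectively. A direct computation gives $\phi'(t)=2\rho/(t+\rho)^2$, which is strictly positive because $\rho>0$; hence $\phi$ is strictly increasing on $\RPP$. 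Since $\beta>0$ forces $1+\beta^{-1}>1$, the strict inequality $\phi(1+\beta^{-1})>\phi(1)$ follows at once.

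For the right inequality $r_{T_1}^*(\rho)>r_R^*(\rho)$, I would note that both rates share the form $(1-s)/(1+s)$, with $s=\rho$ for $r_{T_1}^*$ and $s=\sqrt{\rho}$ for $r_R^*(\rho)=(1-\sqrt{\rho})/(1+\sqrt{\rho})$. Setting $\psi\colon s\mapsto(1-s)/(1+s)$, one has $\psi'(s)=-2/(1+s)^2<0$, so $\psi$ is strictly decreasing. Because $\rho\in\left]0,1\right[$ we have $\sqrt{\rho}>\rho$, whence $\psi(\rho)>\psi(\sqrt{\rho})$, which is exactly the claimed inequality.

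The only points requiring care, and the closest thing to an obstacle here, are the strictness conditions imposed by the domain $\Omega$: the monotonicity of $\phi$ uses $\rho>0$ (if $\rho=0$ the two explicit rates would coincide), while the comparison of arguments in the second step uses $\rho<1$ to guarantee $\sqrt{\rho}>\rho$. Both hold on $\Omega=\RPP\times\left]0,1\right[$, so chaining the two strict inequalities yields $r_G^*(\beta,\rho)>r_{T_1}^*(\rho)>r_R^*(\rho)$, as desired.
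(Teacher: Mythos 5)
Your proposal is correct and follows essentially the same route as the paper: both arguments rest on the monotonicity of $t\mapsto(t-\rho)/(t+\rho)$ in $t$ (giving $\phi(1+\beta^{-1})>\phi(1)$) and the monotonicity of $s\mapsto(1-s)/(1+s)$ in $s$ together with $\sqrt{\rho}>\rho$ on $\left]0,1\right[$. The paper merely packages the two one-variable functions into a single two-variable function $\phi(t,\rho)$ that is increasing in $t$ and decreasing in $\rho$; you also correctly read $r_G^*(\beta,\rho)=(1+\beta^{-1}-\rho)/(1+\beta^{-1}+\rho)$ from \eqref{eq:taugrad}, fixing an evident typo in \eqref{e:defoptrate}.
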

\begin{proof}
Set 
\begin{equation}
\phi\colon (t,\rho)\mapsto \frac{t-\rho}{t+\rho}=1-\frac{2}{1+t/\rho}
\end{equation}
 and note that, for every $(t,\rho)\in\RPP\times\left]0,1\right[$, 
 $\phi(\cdot,\rho)$ is strictly increasing on $\RPP$ and 
$\phi(t,\cdot)$ is strictly decreasing on $\left]0,1\right[$. Noting that 
$\sqrt{\rho}>\rho$, the 
result follows 
from $r_G^*(\beta,\rho)=\phi(1+\beta^{-1},\rho)> 
\phi(1,\rho)=r_{T_1}^*(\rho)$
and $r_{T_1}^*(\rho)=\phi(1,\rho)>\phi(1,\sqrt{\rho})=r_{R}^*(\rho)$.
\end{proof}
We conclude from Lemma~\ref{l:1} that PRS is always more efficient 
than the algorithms governed by 
operators $T_{\tau^*\nabla g,\tau^*\nabla f}$ and $G_{\tau^*(\nabla 
g+\nabla f)}$ for solving Problem~\ref{prob:mainsplitopt}. Therefore,
it is enough to compare $r_{R}^*(\rho)$, $r_{T_2}^*(\beta,\rho)$,
and $r_{S}^*(\beta,\rho)$.
\begin{lemma}
\label{l:comp}
Let $(\beta,\rho)\in\Omega$. The following hold:
\begin{enumerate}
\item 
\label{l:compi}
$r_{T_2}^*(\beta,\rho)<r_{R}^*(\rho)\:\Leftrightarrow\:
\beta>4\:\:\text{and}\:\: 
\beta\rho\in\left]\eta(\beta),
\eta(\beta)^{-1}\right[$,
where
\begin{equation}
\label{e:defeta}
\eta(\beta)=\frac{1-\sqrt{1-4\beta^{-1}}}{1+\sqrt{1-4\beta^{-1}}}\in\left]0,1\right[.
\end{equation}
\item 
\label{l:compii}
$r_{S}^*(\beta,\rho)<r_{R}^*(\rho)\:\:\Leftrightarrow\:\:\beta>16\:\:\:
\text{and}\:\:\:\rho
<1-8(\frac{\sqrt{\beta}-2}{\beta}).$
\item 
\label{l:compiii}
Suppose that $\beta>4$. Then
$r_{S}^*(\beta,\rho)<r_{T_2}^*(\beta,\rho)\:\:\Leftrightarrow
\:\:\rho<\frac{1}{16\beta}.$
\end{enumerate}
\end{lemma}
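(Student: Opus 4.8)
The common thread in all three parts is that each inequality compares two explicit positive expressions in $\sqrt{\rho}$, so the plan is to clear denominators—legitimate because every denominator appearing in \eqref{e:defoptrate} is strictly positive on $\Omega$—and reduce each equivalence to a polynomial inequality in the single variable $s=\sqrt{\rho}\in\left]0,1\right[$. Throughout I keep $\beta>0$ as a parameter and use that $\rho<1$ forces $1-\sqrt{\rho}>0$, so the numerators and $r_R^*$ stay positive. For part \eqref{l:compi}, substituting $s=\sqrt{\rho}$ into $r_{T_2}^*(\beta,\rho)<r_R^*(\rho)$ and cross-multiplying by $(1+2\beta s^2)(1+s)>0$ collapses, after cancelling a factor $2s>0$, to the quadratic inequality $\beta s^2-\beta s+1<0$. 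Its discriminant is $\beta(\beta-4)$, so a nonempty solution interval exists precisely when $\beta>4$, with roots $s_\pm=\tfrac12\bigl(1\pm\sqrt{1-4\beta^{-1}}\bigr)$, and the inequality then holds for $s\in\left]s_-,s_+\right[$.

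The one non-routine step in \eqref{l:compi} is matching $s\in\left]s_-,s_+\right[$ with the stated condition $\beta\rho\in\left]\eta(\beta),\eta(\beta)^{-1}\right[$. I would do this with Vieta's formulas $s_-+s_+=1$ and $s_-s_+=\beta^{-1}$, which give $\beta s_-^2=s_-/s_+=\eta(\beta)$ and $\beta s_+^2=s_+/s_-=\eta(\beta)^{-1}$. Since $s\mapsto\beta s^2=\beta\rho$ is strictly increasing, $s\in\left]s_-,s_+\right[$ is equivalent to $\beta\rho\in\left]\eta(\beta),\eta(\beta)^{-1}\right[$, which closes the equivalence; a side check confirms $\left]s_-,s_+\right[\subset\left]0,1\right[$, so no compatibility with $s\in\left]0,1\right[$ is lost.

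For part \eqref{l:compii} I would first dispose of small $\beta$: when $\beta\le 4$ we have $r_S^*=1/(1+\sqrt{\rho})$, which strictly exceeds $r_R^*=(1-\sqrt{\rho})/(1+\sqrt{\rho})$ for every $\rho>0$, so the inequality fails and I may assume $\beta>4$ and use $r_S^*=2/(2+\sqrt{\beta\rho})$. Writing $s=\sqrt{\rho}$ and clearing the positive denominators in $2/(2+\sqrt{\beta}\,s)<(1-s)/(1+s)$ reduces, after cancelling $s>0$, to the linear condition $s<1-4/\sqrt{\beta}$. A positive solution exists iff $1-4/\sqrt{\beta}>0$, i.e.\ $\beta>16$ (for $4<\beta\le16$ the right-hand side is nonpositive, so no solution, confirming $\beta>16$ is necessary); squaring then gives $\rho<(1-4/\sqrt{\beta})^2=1-8(\sqrt{\beta}-2)/\beta$, the stated threshold.

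For part \eqref{l:compiii}, the hypothesis $\beta>4$ lets me use $r_S^*=2/(2+\sqrt{\beta\rho})$ directly. Clearing denominators in $2/(2+\sqrt{\beta\rho})<1/(1+2\beta\rho)$ and cancelling the common factor $2$ yields $4\beta\rho<\sqrt{\beta\rho}$; setting $w=\sqrt{\beta\rho}>0$ this is $4w^2<w$, i.e.\ $w<1/4$, i.e.\ $\beta\rho<1/16$, which is $\rho<1/(16\beta)$. I expect no genuine difficulty beyond bookkeeping in any of the three parts; the only place needing a small idea is the Vieta identification $\beta s_-^2=\eta(\beta)$ and $\beta s_+^2=\eta(\beta)^{-1}$ in part \eqref{l:compi}, which is exactly what makes the otherwise opaque quantity $\eta(\beta)$ emerge naturally.
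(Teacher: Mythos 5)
Your proposal is correct and follows essentially the same route as the paper: in each part one subtracts the two rates, clears the (positive) denominators, and reduces to a sign condition on a simple polynomial in $\sqrt{\rho}$ (or $\sqrt{\beta\rho}$), arriving at the same quadratic $\beta s^2-\beta s+1<0$ in part one and the same linear conditions in parts two and three. Your Vieta identification $\beta s_-^2=s_-/s_+=\eta(\beta)$ and $\beta s_+^2=\eta(\beta)^{-1}$ is a clean way of carrying out the ``simple computations'' the paper leaves implicit when translating $\sqrt{\rho}\in\left]s_-,s_+\right[$ into $\beta\rho\in\left]\eta(\beta),\eta(\beta)^{-1}\right[$.
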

\begin{proof}
\ref{l:compi}:
Note that
\begin{align}
r_{T_2}^*(\beta,\rho)-r_{R}^*(\rho)=\frac{1}{1+2\beta \rho}-
\frac{1-\sqrt{\rho}}{1+\sqrt{\rho}}
=\frac{1+\sqrt{\rho}-(1-\sqrt{\rho})(1+2\beta \rho)}{(1+2\beta 
\rho)(1+\sqrt{\rho})}
=\frac{2\sqrt{\rho}\beta(\rho-\sqrt{\rho}+\beta^{-1})}{(1+2\beta 
\rho)(1+\sqrt{\rho})}.
\end{align}
Hence,  $r_{T_2}^*(\beta,\rho)<r_{R}^*(\rho)$ is equivalent to 
$1-4\beta^{-1}> 0$ and $\sqrt{\rho}\in\left]\eta_1,\eta_2\right[$, where 
$\eta_1=(1-\sqrt{1-4\beta^{-1}})/2$ and 
$\eta_2=(1+\sqrt{1-4\beta^{-1}})/2$, which yields the result after 
simple computations.

\ref{l:compii}:
It is clear from \eqref{e:defoptrate} that, when $\beta\le 4$, 
$r_S^*(\beta,\rho)\ge r_R^*(\rho)$. Hence, by assuming that 
$\beta>4$, we have
\begin{align}
r_S^*(\beta,\rho)-r_R^*(\rho)&=\frac{2}{2+\sqrt{\beta \rho}}-
\frac{1-\sqrt{\rho}}{1+\sqrt{\rho}}
=\frac{4\sqrt{\rho}-\sqrt{\beta \rho}(1-\sqrt{\rho})}{(2+\sqrt{\beta 
\rho})(1+\sqrt{\rho})}
=\frac{\sqrt{\rho}(4-\sqrt{\beta}+\sqrt{\beta \rho})}{(2+\sqrt{\beta 
\rho})(1+\sqrt{\rho})}.
\end{align}
We observe that, for every $\beta\le 16$, we have
$r_S^*(\beta,\rho)\ge r_R^*(\rho)$ and, if $\beta>16$,
$r_S^*(\beta,\rho)< r_R^*(\rho)$ if and only if 
$\sqrt{\rho}<1-4/\sqrt{\beta}$, from which the result follows.

\ref{l:compiii}: Since
\begin{align}
r_S^*(\beta,\rho)-r_{T_2}^*(\beta,\rho)=
\frac{2}{2+\sqrt{\beta\rho}}-\frac{1}{1+2\beta \rho}
=
\frac{\sqrt{\beta\rho}(4\sqrt{\beta\rho}-1)}{(2+\sqrt{\beta\rho})(1+2\beta 
\rho)},
\end{align}
the proof is complete.
\end{proof}
Now, by using Lemma~\ref{l:comp}, we can conclude which algorithm 
has the lower convergence 
rate depending on the parameters $(\beta,\rho)\in\Omega$.
In Figure~\ref{fig:compthrate} we illustrate the efficiency regions 
thus derived { and Table~\ref{tab:bestalgo} summarizes the result 
of Lemma~\ref{l:comp}.}
\begin{proposition}
\label{prop:comp}
Let $(\beta,\rho)\in\Omega$ and let $\eta$ be the 
function defined in \eqref{e:defeta}. Then, the following hold:
\begin{enumerate}
\item Suppose that $\beta>4$ and that $\rho\in I(\beta)$, where
$$I(\beta)=
\left[\frac{\max\{1/16,\eta(\beta)\}}{\beta},\frac{1}{\beta\eta(\beta)}\right].$$
 Then 
$r_{T_2}^*(\beta,\rho)\le
\min\{r_S^*(\beta,\rho),r_R^*(\rho)\}$.
\item Suppose that $\beta>16$ and that $\rho<\chi(\beta)$, where
$$\chi(\beta)=\min\left\{\frac{1}{16\beta},1-8\frac{\sqrt{\beta}-2}{\beta}\right\}.$$
Then $r_{S}^*(\beta,\rho)<
\min\{r_{T_2}^*(\beta,\rho),r_R^*(\rho)\}$.
\end{enumerate}
In any other case, we have $r_R^*(\rho)\le
\min\{r_{T_2}^*(\beta,\rho),r_{S}^*(\beta,\rho)\}$.
\end{proposition}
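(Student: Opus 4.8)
The plan is to collapse the five-way comparison down to the three survivors $r_R^*$, $r_{T_2}^*$, and $r_S^*$, and then to read off the winner in each region directly from the three pairwise comparisons of Lemma~\ref{l:comp}. First I would invoke Lemma~\ref{l:1}, which gives $r_R^*(\rho)<r_{T_1}^*(\rho)<r_G^*(\beta,\rho)$ on all of $\Omega$; hence the smallest of the five optimal rates is always $\min\{r_R^*,r_{T_2}^*,r_S^*\}$, and EA together with the FBS variant governed by $T_{\tau\nabla g,\tau\nabla f}$ may be discarded. Everything then reduces to bookkeeping the inequalities in Lemma~\ref{l:comp}, whose proof shows that the sign of each pairwise difference flips exactly at the stated boundary, so that the strict statements there have the obvious non-strict (closed) counterparts.

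For the first assertion I would prove $r_{T_2}^*\le r_R^*$ and $r_{T_2}^*\le r_S^*$ separately. The hypothesis $\beta>4$ and $\rho\in I(\beta)$ is exactly $\max\{1/16,\eta(\beta)\}\le\beta\rho\le\eta(\beta)^{-1}$. The bounds $\beta\rho\ge\eta(\beta)$ and $\beta\rho\le\eta(\beta)^{-1}$ place $\beta\rho$ in the closure of the interval of Lemma~\ref{l:comp}\ref{l:compi}, giving $r_{T_2}^*\le r_R^*$; the bound $\beta\rho\ge 1/16$, i.e. $\rho\ge 1/(16\beta)$, is precisely the negation of the condition in Lemma~\ref{l:comp}\ref{l:compiii}, giving $r_{T_2}^*\le r_S^*$. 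The $\max$ in the left endpoint of $I(\beta)$ is there precisely to impose both lower bounds simultaneously.

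For the second assertion I would intersect Lemma~\ref{l:comp}\ref{l:compii} and Lemma~\ref{l:comp}\ref{l:compiii}. The hypothesis $\beta>16$ (so in particular $\beta>4$) together with $\rho<\chi(\beta)=\min\{1/(16\beta),\,1-8(\sqrt{\beta}-2)/\beta\}$ supplies at once $\rho<1-8(\sqrt{\beta}-2)/\beta$, which yields $r_S^*<r_R^*$, and $\rho<1/(16\beta)$, which yields $r_S^*<r_{T_2}^*$; combining gives $r_S^*<\min\{r_{T_2}^*,r_R^*\}$.

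The delicate part is the closing ``in any other case'' claim, which I would prove by contraposition: whenever $r_R^*$ is not the minimum, the point must lie in one of the two previous regions, which I first note are disjoint since region~1 forces $\beta\rho\ge 1/16$ while region~2 forces $\beta\rho<1/16$. Suppose $r_R^*>\min\{r_{T_2}^*,r_S^*\}$. If $\beta\le 4$ this is impossible, since Lemma~\ref{l:comp}\ref{l:compi} gives $r_{T_2}^*\ge r_R^*$ and \eqref{e:defoptrate} gives $r_S^*=1/(1+\sqrt{\rho})>r_R^*$. So assume $\beta>4$ and split on the sign of $r_{T_2}^*-r_R^*$. If $r_{T_2}^*<r_R^*$, then $\beta\rho\in\,]\eta(\beta),\eta(\beta)^{-1}[$ by Lemma~\ref{l:comp}\ref{l:compi}; when $\beta\rho\ge 1/16$ this is region~1, while when $\beta\rho<1/16$ Lemma~\ref{l:comp}\ref{l:compiii} gives $r_S^*<r_{T_2}^*<r_R^*$, so $r_S^*<r_R^*$ forces (via Lemma~\ref{l:comp}\ref{l:compii}) $\beta>16$ and $\rho<1-8(\sqrt{\beta}-2)/\beta$, i.e. region~2. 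If instead $r_{T_2}^*\ge r_R^*$, then $r_S^*<r_R^*\le r_{T_2}^*$, so $r_S^*<r_{T_2}^*$ gives $\rho<1/(16\beta)$ and $r_S^*<r_R^*$ gives $\beta>16$ and $\rho<1-8(\sqrt{\beta}-2)/\beta$, again region~2. In every branch the point lies in region~1 or region~2, which is the contrapositive. The only real work is this case analysis together with checking that the equality (boundary) cases are assigned consistently to the $\le$ statements of the two displayed regions.
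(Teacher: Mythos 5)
Your proof is correct and follows exactly the route the paper intends: the paper states Proposition~\ref{prop:comp} as an immediate consequence of Lemmas~\ref{l:1} and~\ref{l:comp} without writing out the case analysis, and your argument is precisely that omitted bookkeeping, including the correct handling of the closed-interval (equality) boundaries for the non-strict claim in part~1 and the exhaustive contrapositive for the ``in any other case'' clause.
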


\section{Numerical experiments}
\label{sec:exp}
The theoretical results provided in the previous sections are now 
illustrated on standard data processing examples with different 
{levels} of complexity: Piecewise-constant 
denoising and image restoration. {The Matlab codes associated 
with the following experiments are available on Nelly Pustelnik 
website 
{(\href{https://perso.ens-lyon.fr/nelly.pustelnik/Software/toolbox_ratecompare_v-1.0.zip}{link})}.}

\subsection{Piecewise constant denoising}
\label{sec:V-B}

Piecewise constant denoising (also referred as change-point 
detection) is a very well documented problem of signal processing 
literature and it is of interest for numerous signal processing 
{applications} going from  
genomics~\cite{Vert_J_2010_nips_fast_dmc} 
to geophysics studies~\cite{Pascal_B_2020_j-at_para_ffn}.

%\begin{figure*}[t]
%\centering
%\begin{tabular}{cc}
%\hspace{-1cm}\includegraphics[height = 
%6cm]{}&\hspace{-1cm}\includegraphics[height 
%= 
%6cm]{}\\
%\hspace{-1cm}(a) $N=20\times 20$ &\hspace{-1cm}(b) $N=50 
%\times 50$ %& $N=80 \times 80$\\
%\end{tabular}
%\caption{{\small Numerical and theoretical comparisons of PPA and 
%EA for several choices of step-size 
%parameter $\tau$ and different sizes of images when $A$ models a 
%2D periodic filtering associated with a uniform blur of size $7\times 
%7$ 
%leading to $\lambda_{\min} = \{ 5.87\cdot 10^{-6}, 1.97\cdot 
%10^{-7}\}$ 
%and $ \lambda_{\max}=1$.  \label{fig:ols}}}
%\end{figure*}	

\begin{table}[t]
\centering
\begin{tabular}{|c|c|c|}
\hline
Region of parameters $(\beta, \rho)$ & Algorithm with the best 
rate\\
\hline
\hline
$\Omega_1=\Big\{(\beta,\rho) \,\vert\, \beta>4\;\;\mbox{and}\;\; 
\rho \in \big 
[\frac{\max\{1/16,\eta(\beta)\}}{\beta},\frac{1}{\beta\eta(\beta)}\Big]\Big\}$&
 FBS with $\prox_{\tau f}$\\
\hline
$\Omega_2=\Big\{(\beta,\rho) \,\vert\, \beta>4\;\;\mbox{and}\;\; 
\rho \in \Big[\frac{1}{16\beta} ,1-8\frac{\sqrt{\beta}-2}{\beta}\Big 
]\Big\}$ & DRS\\
\hline
$\Omega\setminus(\Omega_1\cup \Omega_2 )$ & PRS\\
\hline
\end{tabular}
\caption{Best rate algorithms among EA, FBS with prox or 
gradient activation of the strongly convex function, DRS, PRS 
when $\alpha=1$ when considering the minimization 
problem $\min_{x\in \HH} f(x) + g(x)$, where $f$ (resp. $g$) is 
convex differentiable with $\alpha^{-1}$ (resp. 
$\beta^{-1}$)-Lipschitz gradient and $f$ is $\rho$-strongly 
convex.\label{tab:bestalgo}}
\end{table}

\begin{figure*}[t]
\centering
\begin{tabular}{cc}
\hspace{-0.7cm}\includegraphics[height = 
6cm]{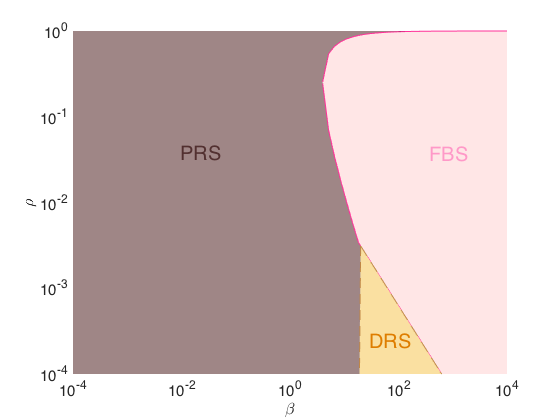} & 
\raisebox{0.1cm}{\includegraphics[width = 7.7cm, height = 
5.6cm]{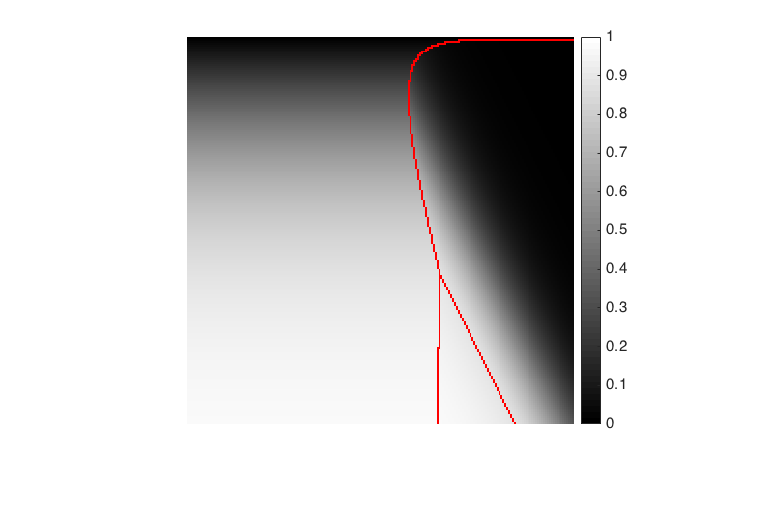}} \\
\end{tabular}
\caption{{(Left) Regimes where PRS or FBS or DRS achieves a 
better rate according to Proposition~\ref{prop:comp} when 
$\alpha=1$ as a function of $(\beta,\rho)$. (Right) Optimal 
numerical rates and associated regions.}\label{fig:compthrate}}
\end{figure*}	
The standard formulation is dedicated to piecewise constant signal 
$\overline{x}\in \mathbb{R}^N$ degraded with a 
Gaussian noise $\varepsilon\sim \mathcal{N}(0,\sigma^2\mathbb{I})$, whose degraded version is denoted $z = \overline{x} + \varepsilon$. An 
illustration of $\overline{x}$ (resp. $z$) is provided in solid black line (resp. gray) in Figure~\ref{fig:piecewiseex1}~(top). 

The estimation of a piecewise 
constant signal $\widehat{x}$ from degraded data $z$  
has  been  addressed  by  several strategies going from Cusum  
procedures~\cite{Basseville:M:1993},  hierarchical  Bayesian  inference  
frameworks~\cite{Lavielle_M_2001_sp_app_mcmc},  or  functional optimization formulations involving 
$\ell_1$-norm or the $\ell_0$-pseudo-norm of the first 
differences of the signal (see 
e.g.~\cite{Frecon_J_2017_j-ieee-tsp_bay_sl2} and references 
therein). In  the latter context, we consider the minimization 
problem:
{\begin{equation}
\label{eq:l2hubL}
\minimize{x\in\RR^N} {\frac{1}{2} \Vert x - z \Vert_2^2+\chi 
h_{\mu}(Dx)},
\end{equation}}
where {$D\in \RR^{N-1\times N}$} denotes the {first-order} 
discrete difference operator
{$$
(\forall n\in\{1,\ldots,N-1\})\quad (Dx)_n = \frac{1}{2}(x_n - x_{n-1})
$$}
and  $h_{\mu}\colon\RR^{N-1}\to\RR$ denotes the Huber loss of 
parameter $\mu>0$, {which is a smooth 
approximation of the 
$\ell_1$-norm defined by (see, e.g., 
\cite[Example~2.5]{combettesGlaudin})
\begin{equation}
\label{e:defHuber}
h_{\mu}\colon (\zeta_i)_{1\le i\le 
m}\mapsto \sum_{i=1}^{N-1} \phi_{\mu} 
(\zeta_i)\quad\text{and}\quad
\phi_{\mu}\colon \zeta
\mapsto\left\{\begin{array}{ll}{|\zeta|-\frac{\mu}{2},} & {\text { if 
}|\zeta|>\mu}; \\ 
	{\frac{|\zeta|^{2}}{2\mu},} & {\text { if }|\zeta| \leqslant 
	\mu}.\end{array}\right.
\end{equation}
Note that, since
$$
\phi'_{\mu}\colon \zeta
\mapsto\left\{\begin{array}{ll}{\frac{\zeta}{|\zeta|},} & {\text { if 
		}|\zeta|>\mu}; \\ 
	{\frac{\zeta}{\mu},} & {\text { if }|\zeta| \leqslant 
		\mu},\end{array}\right.
$$
we have  $h_{\mu}\in \mathscr{C}^{1,1}_{1/\mu}(\RR^{N-1})$.}   By 
setting
$f = \frac{1}{2} \Vert \cdot - z \Vert_2^2$ and 
{$g = \chi h_{\mu}\circ D$}, \eqref{eq:l2hubL} is a particular 
instance of Problem~\ref{prob:mainsplitopt}, where 
$f$ is $\rho = 1$ strongly convex, $\alpha=1$, and 
{$\beta = \frac{\mu}{{\chi}\Vert D\Vert^2}$} and it can be 
solved by the following two schemes:
\begin{enumerate}
\item[1-] \textbf{EA: } Use  
$G_{\tau(\nabla 
g+\nabla f)}$ with the
step-size $\tau^*$ in \eqref{eq:taugrad}.
\item[2-] \textbf{FBS: } Use 
$T_{\tau\nabla f,\tau\nabla 
	g}$  with the step-size 
$\tau^*$ in \eqref{eq:taufb3}.
\end{enumerate}
Moreover, the proximity operator of $h_{\mu}$ can be computed 
explicitly via
\begin{equation}
(\forall \tau>0)\quad \prox_{\tau h_{\mu}}\colon (\zeta_i)_{1\le i\le 
	m}\mapsto (\prox_{\tau\phi_{\mu}}\zeta_i)_{1\le i\le m},
	\end{equation}
where
	\begin{equation} 
\prox_{\tau \phi_{\mu}}\colon \zeta
\mapsto\left\{\begin{array}{ll}{\zeta -  \frac{\tau\zeta}{|\zeta|},} & 
	{\text { if }|\zeta|>\tau + \mu};\\ 
	{\frac{\mu\zeta}{\tau+\mu},} & {\text { if }|\zeta| \leqslant \tau + 
		\mu}.\end{array}\right.
\end{equation}
However, the proximity 
operator of {$h_{\mu}\circ D$} is not explicit because of the 
influence of 
operator {$D$}.
By exploiting the separable structure of $h_{\mu}$, we obtain the 
following equivalent formulation of 
\eqref{eq:l2hubL}:
{\begin{equation}
	\label{eq:l2hubLsplit}
	\min_{x\in\HH} \frac{1}{2} \Vert x - z \Vert_2^2+\chi 
	h_{\mathbb{I}_1}(D_{\mathbb{I}_1}x)+\chi 
	h_{\mathbb{I}_2}(D_{\mathbb{I}_2}x),
\end{equation}}
where $\mathbb{I}_1 = \{1,3,\ldots\}$ and 
$\mathbb{I}_2 = \{2,4,\ldots\}$ are the sets of odd and even 
indices and, for $k\in\{1,2\}$,
$h_{\mathbb{I}_k}(y_{\mathbb{I}_k}) = \sum_{i\in \mathbb{I}_k} 
\phi_{\mu}(y_i)$, and {$D_{\mathbb{I}_k}\in 
\mathbb{R}^{|\mathbb{I}_k| 
\times N}$} denotes the sub-matrix of {$D$} associated with the 
$\mathbb{I}_k$ rows. Since
{$D_{\mathbb{I}_1}D_{\mathbb{I}_1}^{\top}=\textrm{Id}/2$ and 
$D_{\mathbb{I}_2}D_{\mathbb{I}_2}^{\top}=\textrm{Id}/2$}, %for some 
%diagonal matrices $D_1={\rm diag}(d_i^1)_{i\in \mathbb{I}_1}$ 
%and $D_2={\rm diag}(d_i^2)_{i\in \mathbb{I}_2}$, {$d_k\Id $??}
the split formulation \eqref{eq:l2hubLsplit} allows for the following
closed form expressions of the proximity operator of 
{$h_{\mathbb{I}_k} 
\circ D_{\mathbb{I}_k}$} (see 
\cite[Proposition~23.25]{bauschke2011:Convex_analysis})
{$$
(\forall k\in\{1,2\} )(\forall \tau>0)\quad 
\prox_{\tau h_{\mathbb{I}_k} \circ D_{\mathbb{I}_k}}\colon 
z\mapsto  z 
- 2L_{\mathbb{I}_k}^\top 
\big(\Id- \prox_{\frac{\tau}{2}
h_{\mathbb{I}_k}}\big)(D_{\mathbb{I}_k}z),
$$}
where $\prox_{\frac{\tau}{2} 
h_{\mathbb{I}_k}}\colon (\zeta_i)_{i\in \mathbb{I}_k}
\mapsto (\prox_{\frac{\tau}{2}\phi_{\mu}}\zeta_i)_{i\in \mathbb{I}_k}$.
By setting {$\widetilde{f} = \frac{1}{2} \Vert \cdot - z \Vert_2^2 + 
\chi h_{\mathbb{I}_2}(D_{\mathbb{I}_2}\cdot)$} and 
{$\widetilde{g} 
= \chi h_{\mathbb{I}_1}(D_{\mathbb{I}_1}\cdot)$,} 
we write \eqref{eq:l2hubLsplit} as Problem~\ref{prob:mainsplitopt},
where $\widetilde{f}$ is $\rho=1$ strongly convex, 
{$\alpha=\frac{\mu}{\mu+ 
{\chi}\Vert 	D_{\mathbb{I}_2}\Vert^2}$, and $\beta = 
\frac{\mu}{{\chi}\Vert D_{\mathbb{I}_1}\Vert^2}$.} 
This approach gives raise to 4 alternative methods 
for solving \eqref{eq:l2hubLsplit}.

\begin{enumerate}
\item[3-] \textbf{FBS 2: }  Use  
$T_{\tau\nabla 
\widetilde{g},\tau\nabla \widetilde{f}}$ with the
step-size $\tau^*$ in \eqref{eq:taufb2}.
\item[4-] \textbf{FBS 3: }  Use   
$T_{\tau\nabla \widetilde{f},\tau\nabla 
\widetilde{g}}$  with the
step-size $\tau^*$ in \eqref{eq:taufb3}.
\item[5-]  \textbf{PRS:}  Use  
$R_{\tau\nabla 
\widetilde{f},\tau\nabla 
\widetilde{g}}$  with the step-size $\tau^*$ in \eqref{eq:taupropti}.
\item[6-] \textbf{DRS: }  Use  
$S_{\tau\nabla \widetilde{f},\tau\nabla \widetilde{g}}$  with the 
step-size $\tau^*$ in 
\eqref{eq:taudr}.
\end{enumerate}

We consider an approximation of the unique solution $\widehat{x}$ to
\eqref{eq:l2hubL}, by applying PRS with a large number of iterations. %tolerance of 
%{$10^{-NN}$}.
In view of Section~\ref{ssec:probalgo}, 1-EA, 2-FBS, 3-FBS2, and 4-FBS3 are 
initialized with  $x_0 = z$, while using
$$z = \prox_{\gamma f}(x_n) 
\Leftrightarrow (\Id+ \gamma \nabla f) 
y_n = x_n$$ 
{proximal-based} procedures 5-PRS
and 6-DRS are initialized by $x_0 = z +\tau \nabla f(z)$, 
in order to provide similar initializations.

The numerical and theoretical convergence rate are displayed in 
Figure~\ref{fig:piecewiseex1} for different 
settings of $\mu$ and $\chi$ 
leading to sharper or smoother estimates depending of the 
configuration. When $\mu = 10^{-4}$ the performance are 
{similar to 
what is} expected for $\ell_1$-minimization. 

From Figure~\ref{fig:piecewiseex1}~(bottom), {we can observe 
that 
PRS iterations provide the best theoretical and experimental rates 
when the optimal step-size is selected. DRS iterations also provide a 
good behavior}, while EA and FBS strategies relying on the splitting 
$f = \frac{1}{2} \Vert \cdot - z \Vert_2^2$ and 
{$g = \chi h_{\mu}\circ D$} appears less efficient than the one 
involving 
the 
splitting {$\widetilde{f} = \frac{1}{2} \Vert \cdot - z \Vert_2^2 + 
\chi h_{\mathbb{I}_2}(D_{\mathbb{I}_2}\cdot)$ and $\widetilde{g} 
= \chi h_{\mathbb{I}_1}\circ D_{\mathbb{I}_1}$.} Similar conclusion 
can be observed from Figure~\ref{fig:piecewiseex1}~(top), where the 
optimal solution is reached after 100 iterations for DRS (light brown) 
and PRS  (dark brown) while gradient based procedures require much 
more iterations. This is especially true when $\mu$ is small, leading to 
a large Lipschitz constant and, thus, to a small step-size for 
gradient-based algorithms. Moreover, observe that our results are 
consistent with Proposition~\ref{prop:comp} illustrated in 
Figure~\ref{fig:compthrate}. Since $\alpha=\rho=1$, we verify that 
PRS is the most efficient algorithm and that EA and FBS are not 
competitive. 
\begin{figure*}[h!]
\label{fig:l2hubL}
\hspace{-1cm}\begin{tabular}{c c}
\includegraphics[width 
=7.8cm]{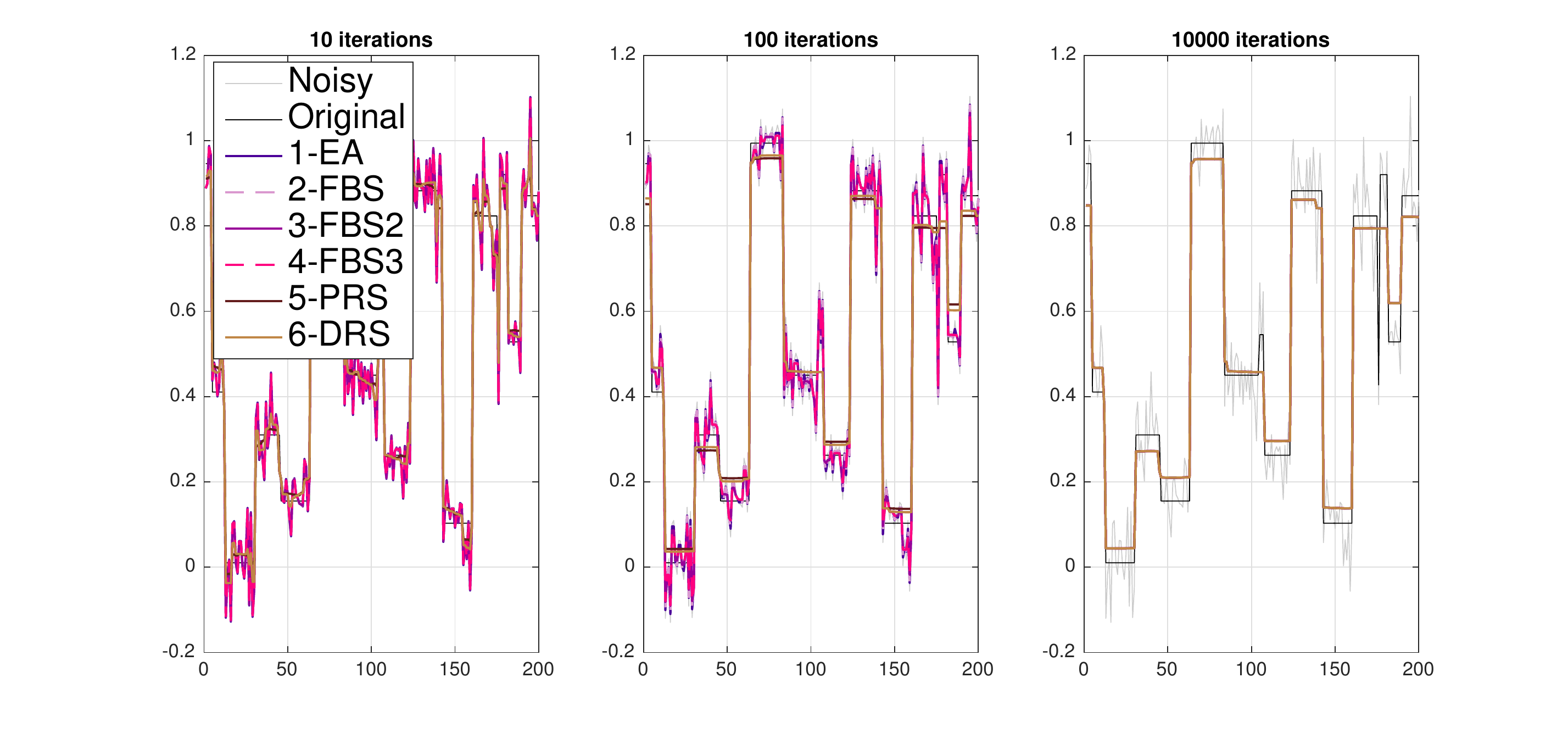} & 
\includegraphics[width =7.8cm]{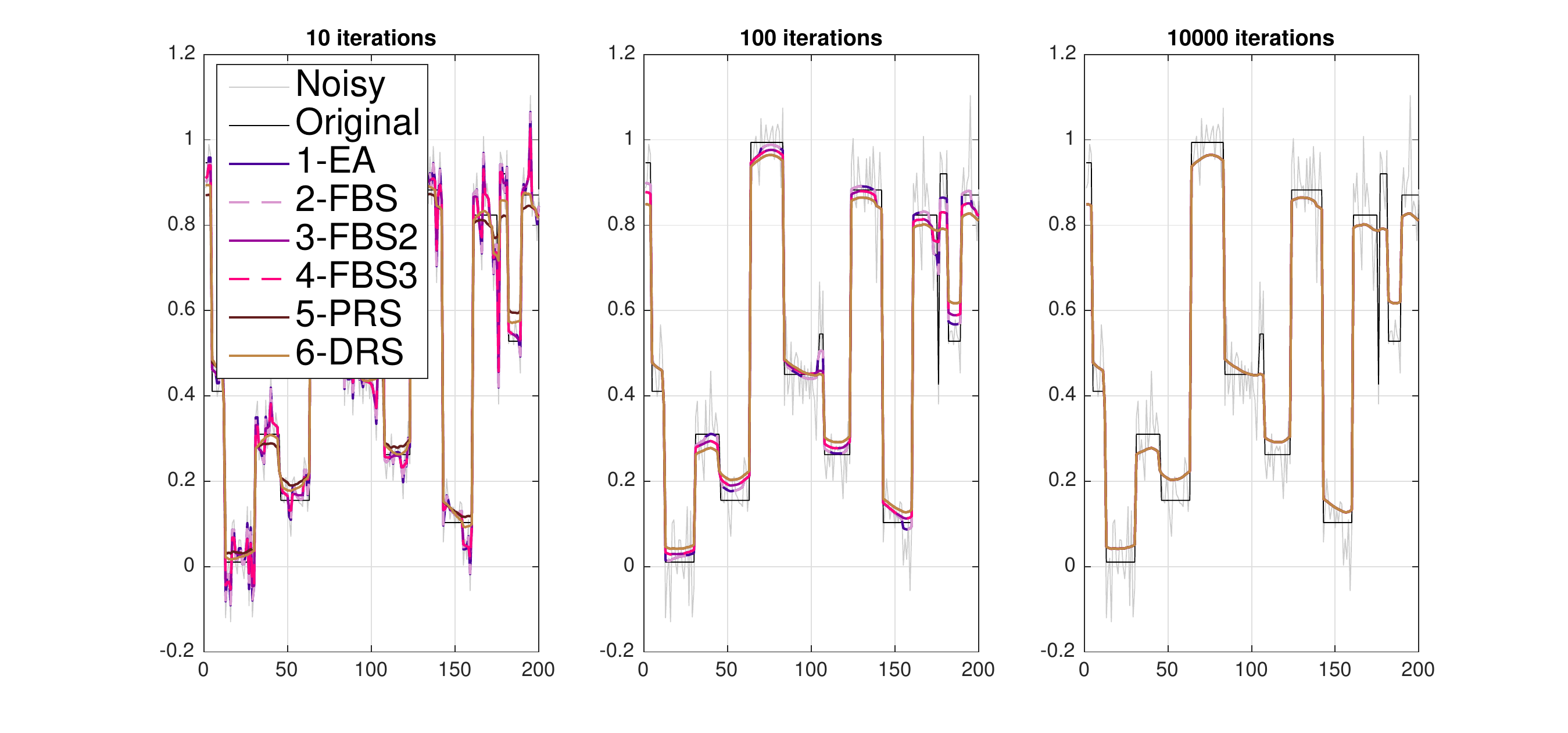} 
\\{(a) Original/degraded/reconstructed signals} & {(b) 
Original/degraded/reconstructed signals}\\
\includegraphics[width = 
7.8cm]{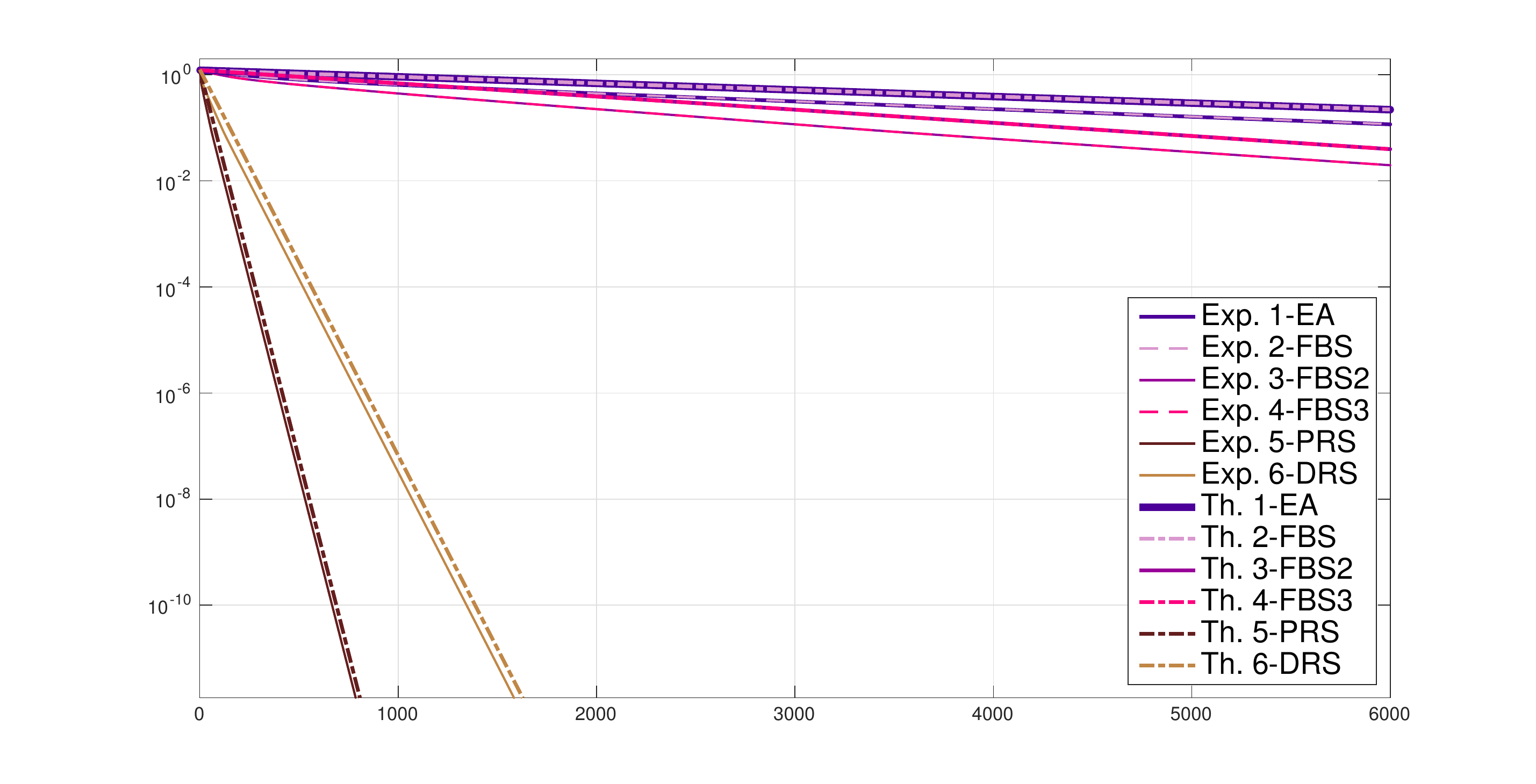} & 
\includegraphics[width = 
7.8cm]{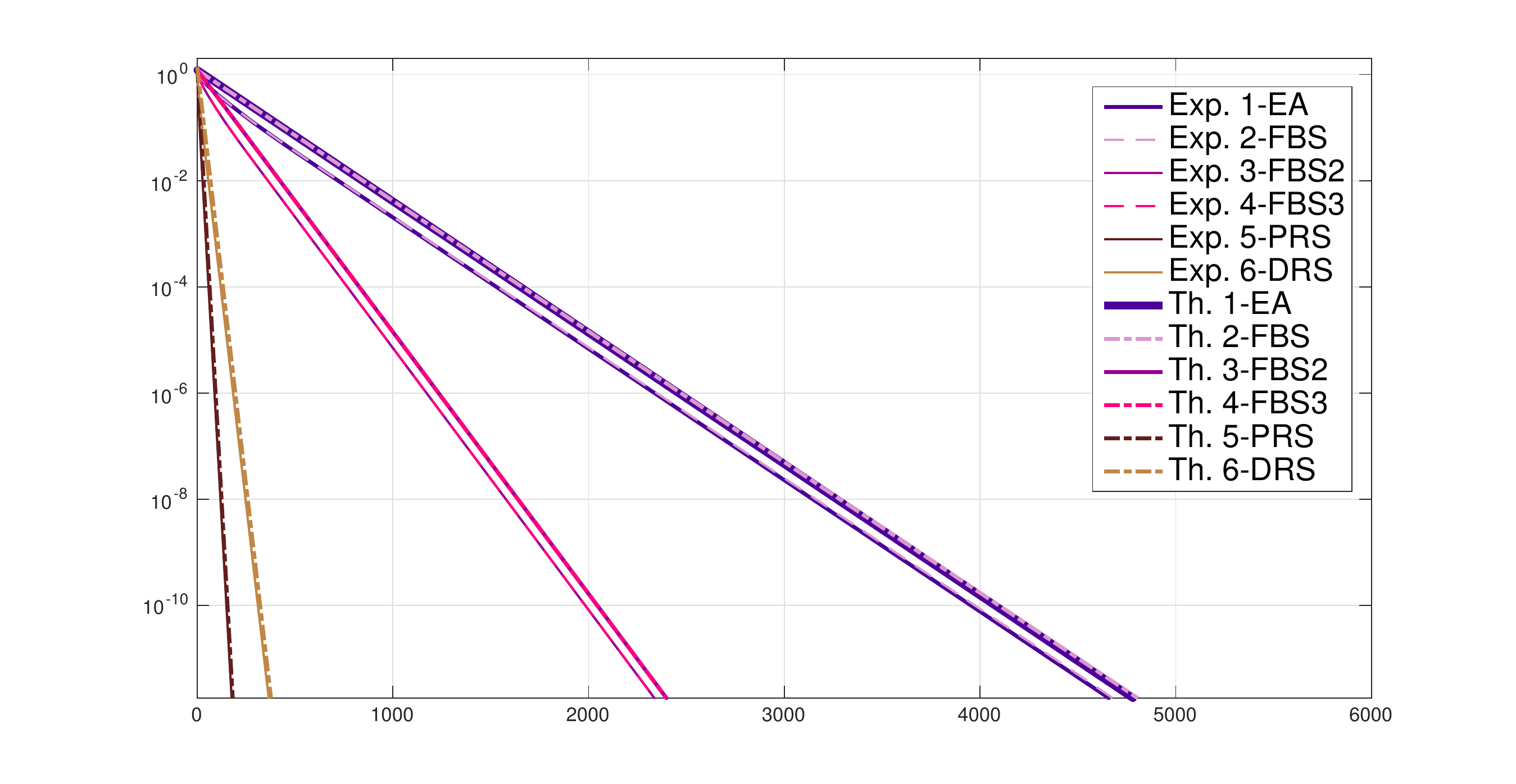}\\
{(c) Errors vs Iterations} & {(d) Errors vs Iterations}
\end{tabular}
\caption{{\small Piecewise constant denoising estimates 
after 10, 100, and 10000 iterations with 
$\chi=0.7$ and $\mu = 0.0001$ (a) and $\chi=0.7$ and $\mu = 
0.002$ (b). We can observe that the piecewise 
constant estimate is obtained after 100 iterations for 
DRS or PRS while EA or FBS requires much more iterations.
We also exhibit the experimental and theoretical {errors} 
associated with 
each implemented 
{method} for optimal step-size $\tau$ with respect to iteration 
number (c-d). The behavior is in accordance 
{with} the results observed on the first row. 
\label{fig:piecewiseex1}}}
\end{figure*}

%\begin{figure}[h!]
%\label{fig:l2hubL2}
%\hspace{-1cm}\begin{tabular}{c}
%
%\end{tabular}
%\caption{{\small (top) Piecewise constant denoising estimates with 
%$\chi=0.7$ and $\mu = 0.002$ after (left) 10 iterations, (middle) 100 
%iterations, (right) 1e4 iterations. We can observe that the piecewise 
%constant estimate is obtained after 100 iterations when considering 
%DR ou PR while Gradient or FB requires much iterations. (bottom) 
%Experimental and theoretical rates associated with each implemented 
%methods for optimal step-size $\tau$. The behaviour is in accordance 
%to the results observed on the first row. \label{fig:piecewiseex2}}}
%\end{figure}

\subsection{Image restoration}
\label{sec:IIIC}
Another classical signal processing problem is image restoration that 
consists in recovering an image $\overline{x}\in \mathbb{R}^N$ with 
$N$ pixels from degraded observations $z = A\overline{x} + 
\varepsilon$, where $A\in \RR^{M\times 
N}$ {with $M\ge N$} and $\varepsilon\sim 
\mathcal{N}(0,\sigma^2\mathbb{I})$ is a 
white Gaussian noise. 
A standard penalization imposes the sparsity of the coefficients 
resulting from a linear transform such as a wavelet transform 
\cite{Pustelnik_N_20016_j-w-enc-eee_wav_bid} and the restoration 
can then be achieved by solving
\begin{equation}
\label{eq:l2AhubL}
\underset{{x\in\HH}}{\textrm{minimize}}\; \frac{1}{2} \Vert Ax - z 
\Vert_2^2+\chi h_{\mu}(Wx),
\end{equation} 
where $\chi>0$ is the regularization parameter, $W$ denotes a 
weighted wavelet transform, and $h_{\mu}$ is the 
Huber penalization of parameter $\mu>0$ {defined in 
\eqref{e:defHuber}.}

Following Proposition~\ref{prop:comp}, we propose to evaluate the 
theoretical and the experimental rates for the following algorithmic 
schemes, where $f = \frac{1}{2} \Vert A\cdot - z \Vert_2^2$ 
and $g = \chi h_{\mu} \circ W$:
\begin{enumerate}
%\item[1-] \textbf{EA: }  Use $G_{\tau(\nabla 
%g+\nabla f)}$ with the
%step-size $\tau^*$ in \eqref{eq:taugrad}.
%\item[2-] \textbf{FBS: }  Use 
%$T_{\tau\nabla 
%g,\tau\nabla f}$ with the
%step-size $\tau^*$ in \eqref{eq:taufb2}.
\item[1-] \textbf{FBS: }Use 
$T_{\tau\nabla f,\tau\nabla 
g}$  with the
step-size $\tau^*$ in \eqref{eq:taufb3}.
\item[2-]  \textbf{PRS:} Use 
$R_{\tau\nabla 
f,\tau\nabla 
g}$  with the step-size $\tau^*$ in \eqref{eq:taupropti}.
\item[3-] \textbf{DRS: }   Use 
$S_{\tau\nabla f,\tau\nabla g}$  with the step-size $\tau^*$ in 
\eqref{eq:taudr}.
\end{enumerate}
In this context, by denoting $\lambda_{\max}$ and $\lambda_{\min}$
the largest and lowest eigenvalues of $A^{\top}A$, 
$\rho = \lambda_{\min}$ is the strong 
convexity parameter of $f$, 
$\lambda_{\max}=\alpha^{-1}$ is the Lipschitz constant of $\nabla f$, 
and  $\beta^{-1} =\frac{\chi}{\mu}$ is the Lipschitz constant of $\nabla 
g$. 
The results are displayed in 
Figure~\ref{fig:restoresults} for an image with $N=2^{12}$ pixels when 
$A$ is a random Gaussian matrix {of size $4900\times 4096$} 
with $\lambda_{\min}=0.0022$ 
and 
$\lambda_{\max}=1$. The results are obtained considering three
values of $\chi\in\{0.001,0.04,10\}$ and $\mu= 1$ in order to consider 
three instances (green dots) in
the three different efficiency regions (displayed in brown, orange, and 
pink).% displayed in Figure~\ref{fig:restoresults}.
%{Constante de forte convexite si faible que l'approche de 
%contraction 
%stricte ne marche plus}

\begin{figure*}
\centering
\vspace{-1cm}
\begin{tabular}{cc}
Original & \\
\vspace{-3cm}\includegraphics[height =  
2.4cm]{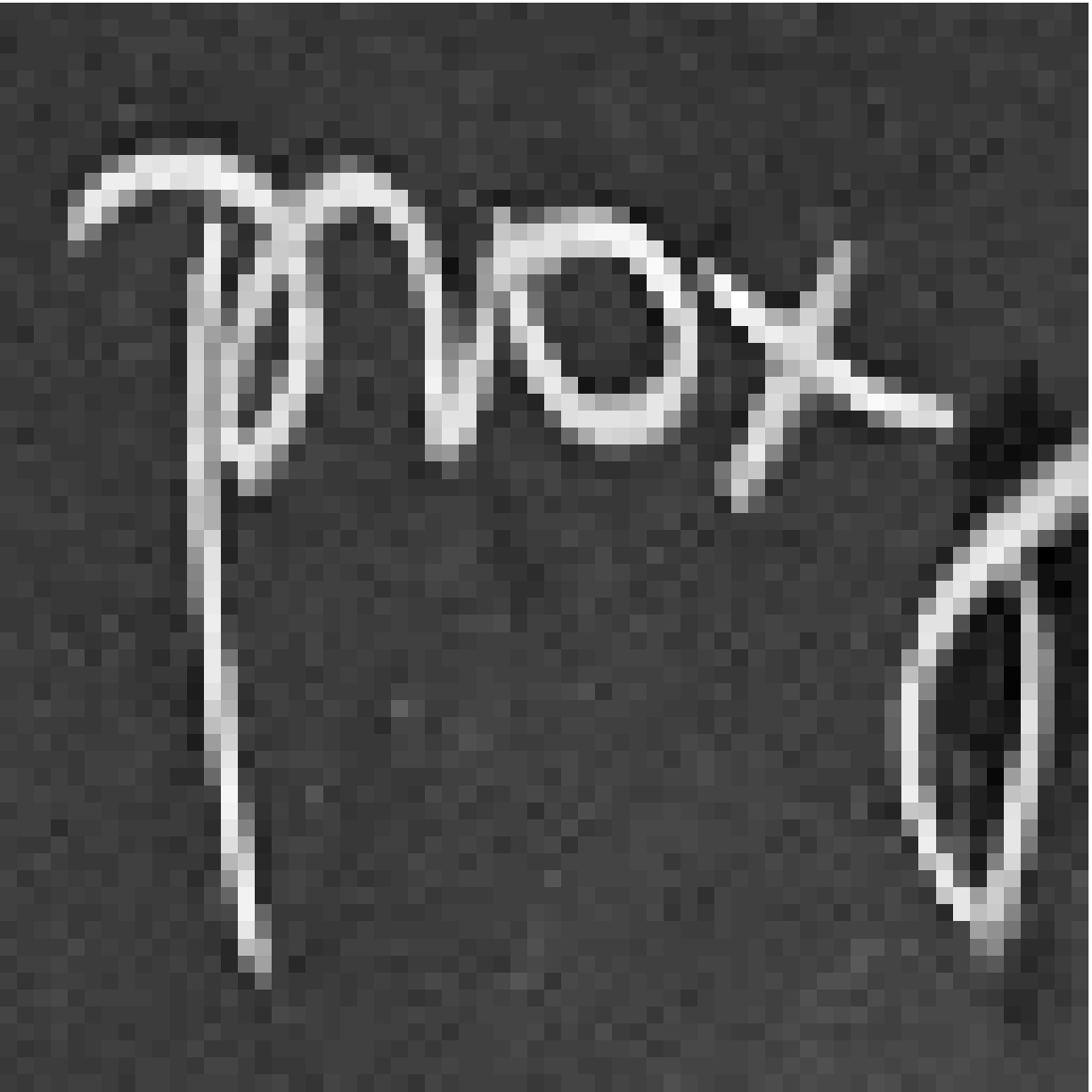}&\raisebox{-3cm}{\includegraphics[height = 
5.8cm]{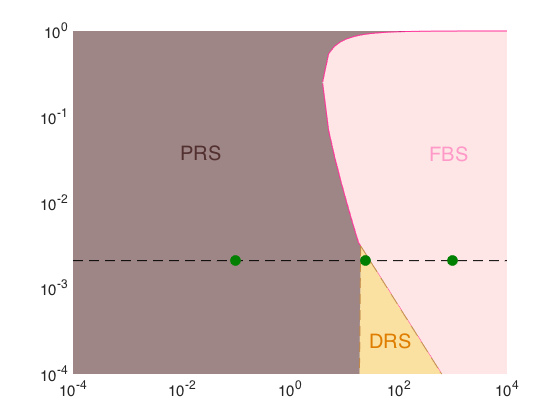}}\\
$A^{-1}z$ & \\
\includegraphics[height =  2.4cm]{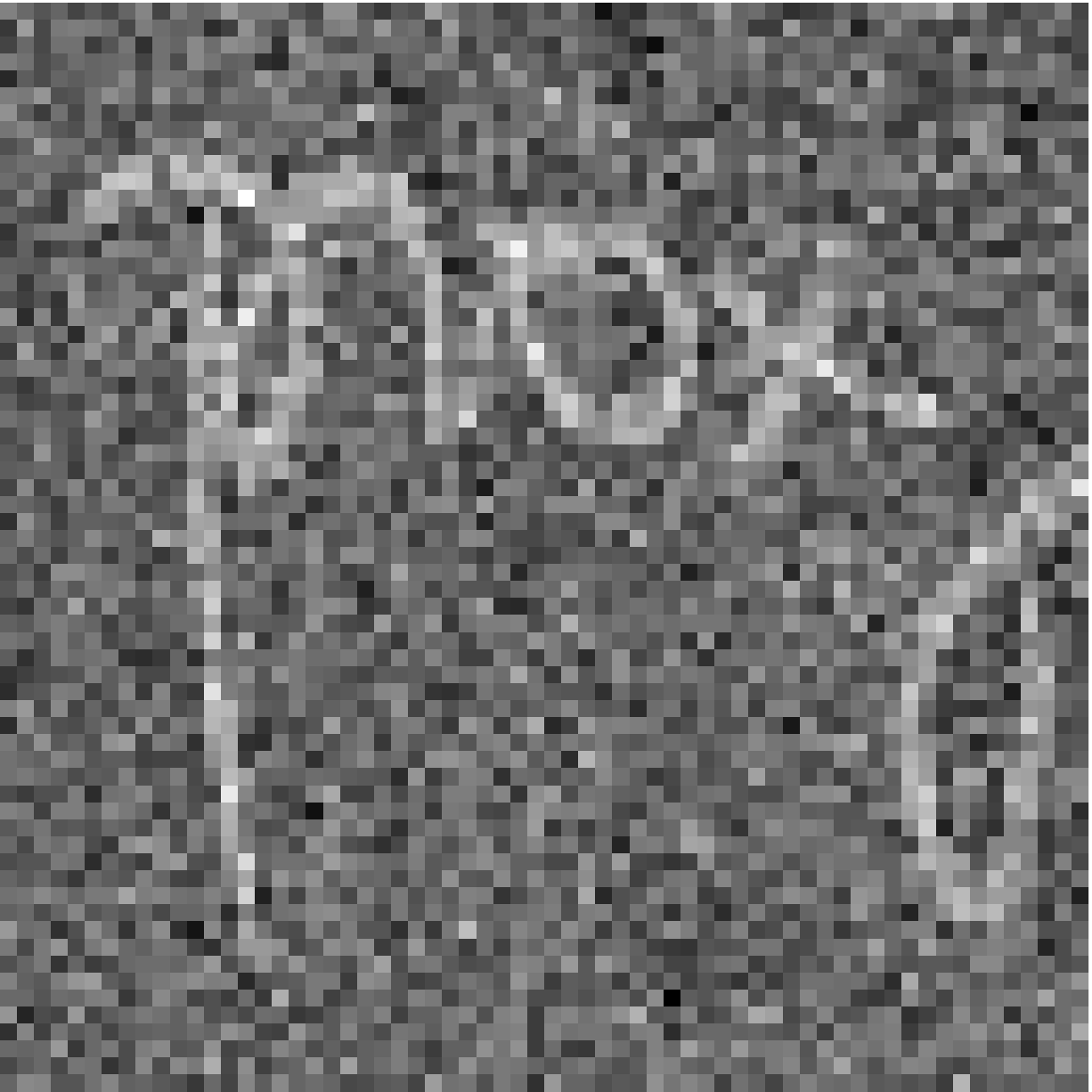}&\\
&\\
\multicolumn{2}{c}{Image restored with PRS with $\beta =0.1$ and 
convergence behavior for the different schemes.}\\
\raisebox{1.3cm}{\includegraphics[height =  
2.4cm]{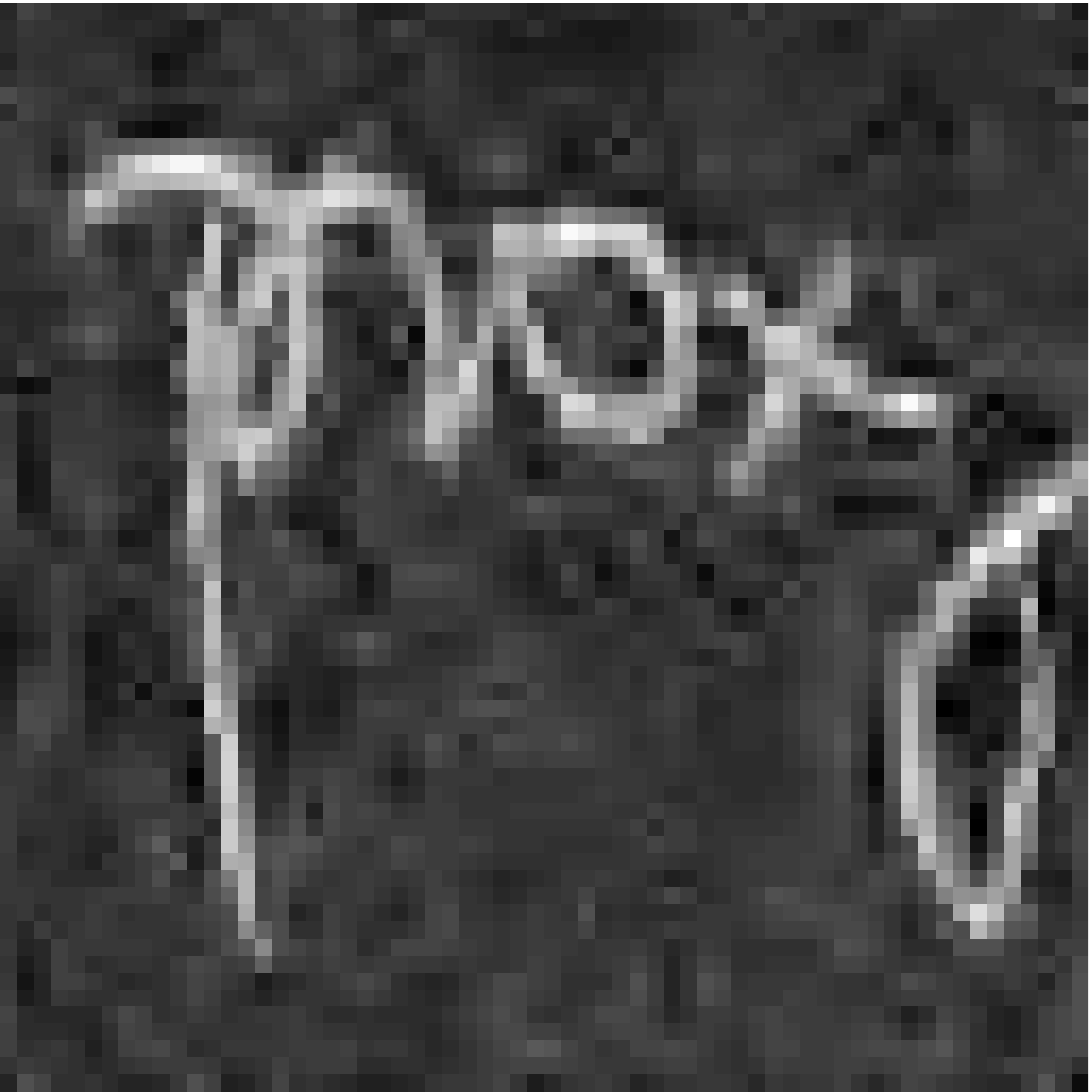}}&\includegraphics[height =  
3.8cm]{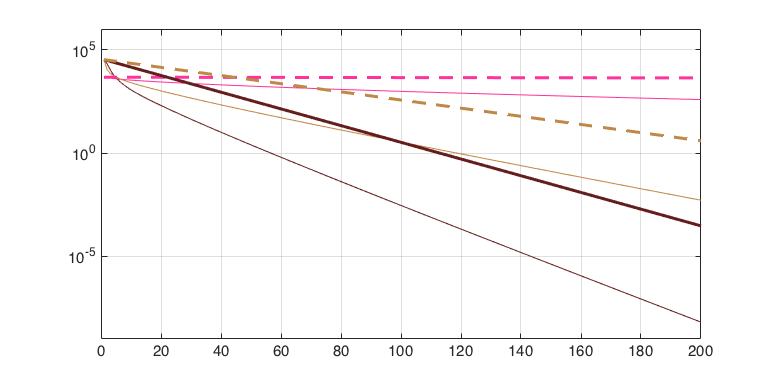}\\
\multicolumn{2}{c}{Image restored with DRS with $\beta =25$  and 
convergence behavior for the different schemes.}\\
\raisebox{1.3cm}{\includegraphics[height =  
2.4cm]{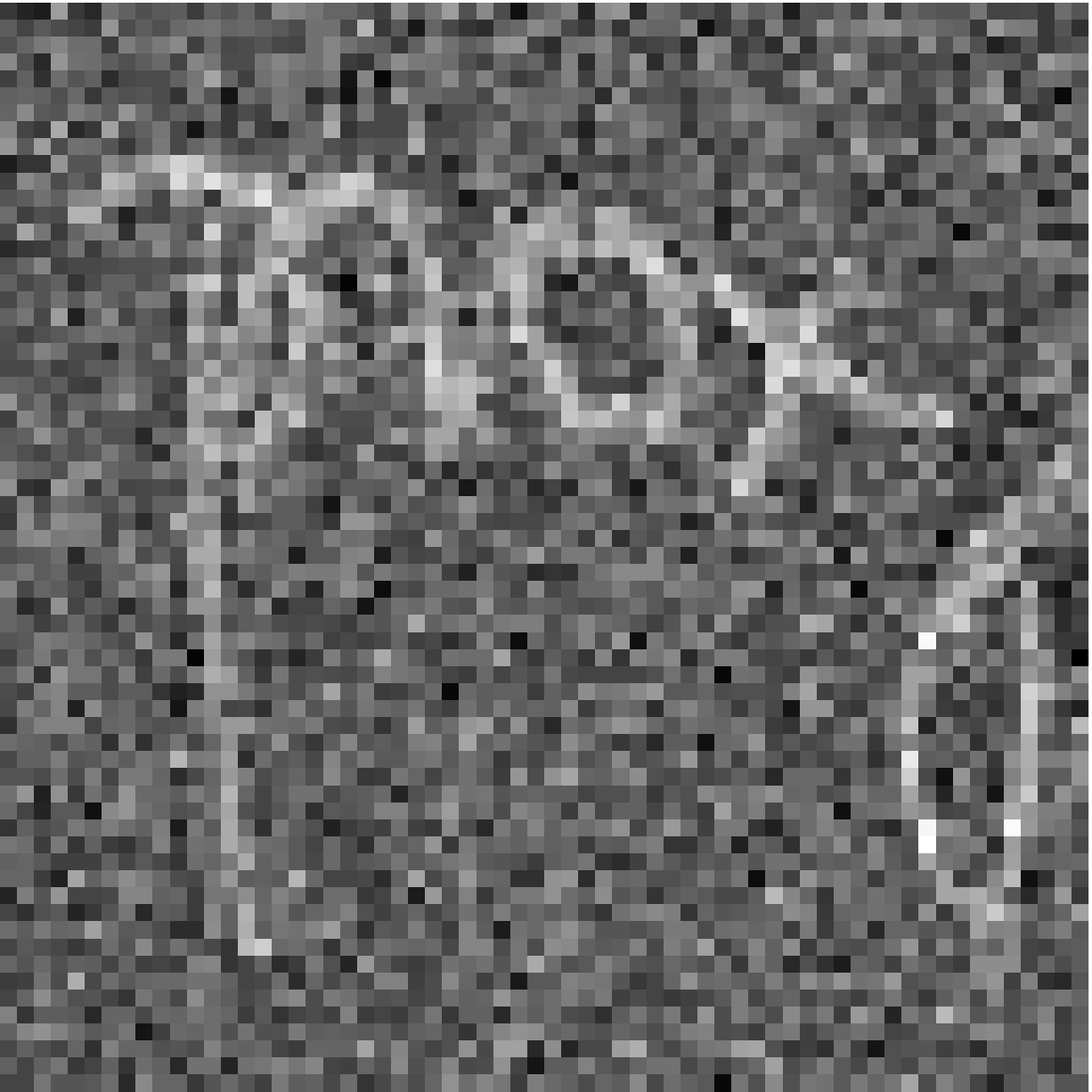}}&\includegraphics[height =  
3.8cm]{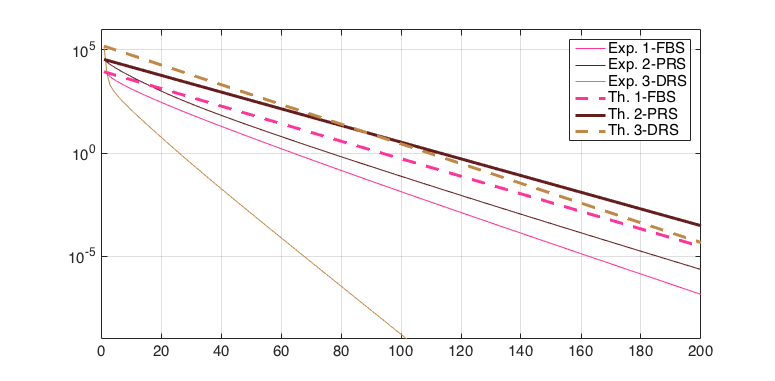}\\
\multicolumn{2}{c}{Image restored with FBS with $\beta =1000$  and 
convergence behavior for the different schemes.}\\
\raisebox{1.3cm}{\includegraphics[height =  2.4cm]{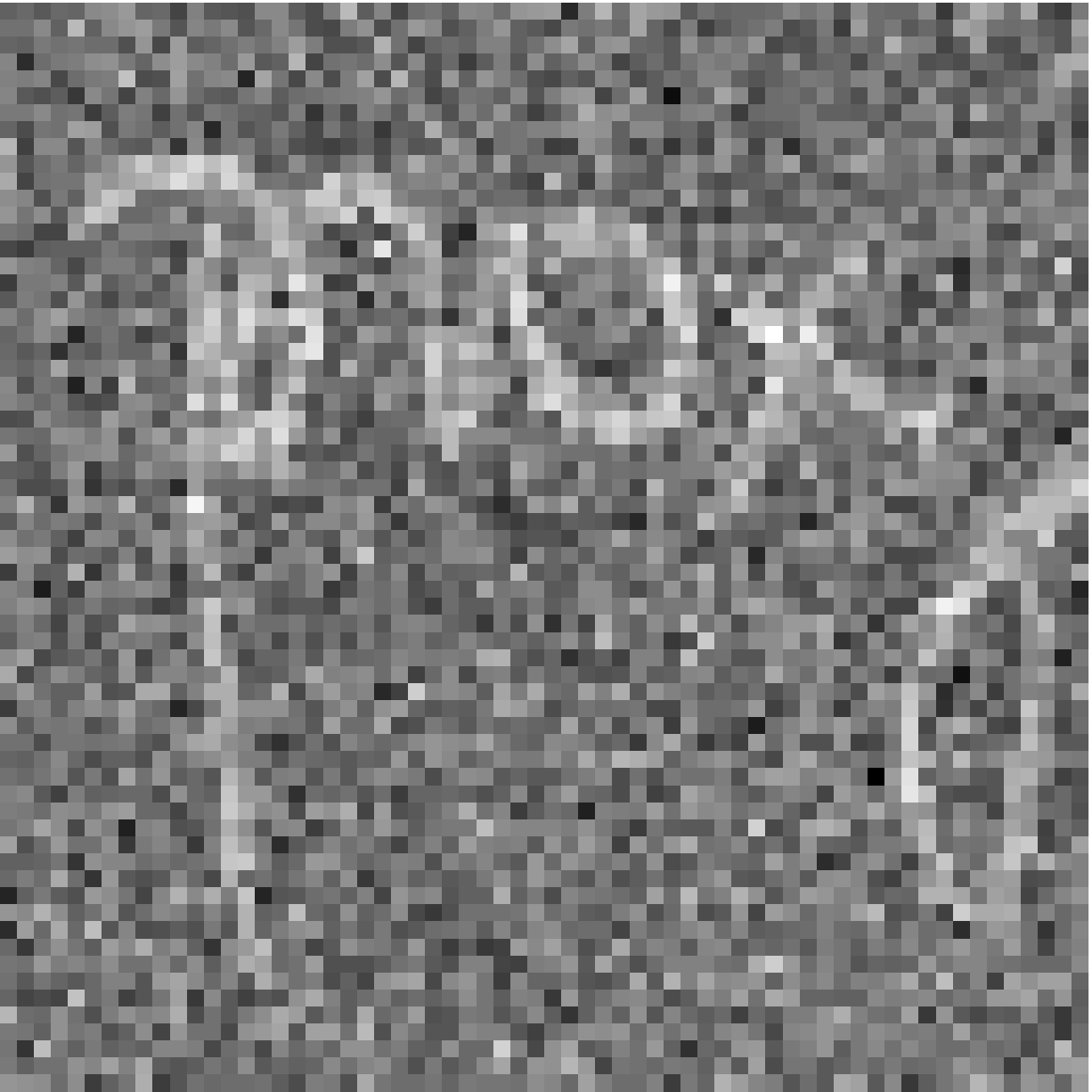}}&
\includegraphics[height =  3.8cm]{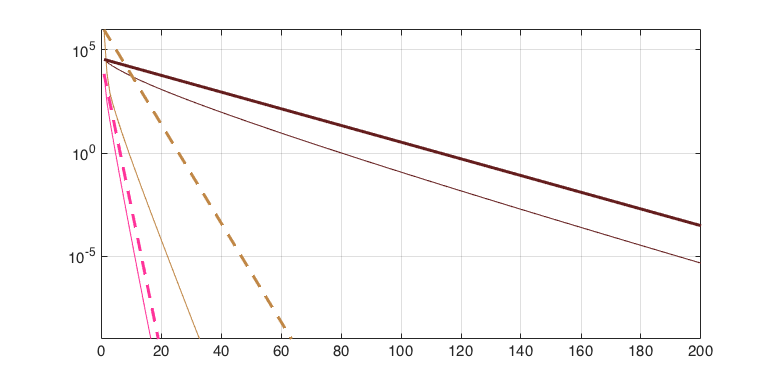}\\
\end{tabular}
\caption{Image restoration example considering a random matrix. 
The figure at the top right includes in 
Figure~\ref{fig:compthrate} a dashed black line and three green dots
representing the cases explored in this experiment ($\rho = 
\lambda_{\min}=0.0022$, $\alpha=1/\lambda_{\max}=1$, $\mu=1$,
and $\beta=\mu/\chi\in\{0.1,25,1000\}$). We verify that the theoretical 
and numerical errors decay as predicted in 
Proposition~\ref{prop:comp}.  \label{fig:restoresults}}
\end{figure*}

% evaluate the impact the regularization parameter to the choice of 
%the algorithm. 
%The images are displayed in 
%Figure~\ref{fig:piecewiseex3}(top) leading to a sparse solution despite 
%the fact that an approximation of the $\ell_1$-norm is considered. The 
%initialization has been set as in Section~\ref{sec:V-B} for all 
%algorithmic schemes.
%
%From Figure~\ref{fig:piecewiseex3} (bottom), we observe that the 
%strong convexity constant is so small that the theoretical rate cannot 
%be usable (all close to 1) in practice to compare methods, which 
%illustrates the limit of strong convexity on standard inverse problems. 
%For this reason appropriate choice of $\tau$ is selected manually for 
%each method. We can observe that full proximal strategy (4-PRS, 
%5-PRS) stay the more efficient. %
The first set of experiments (second row in 
Figure~\ref{fig:restoresults}) displays the 
results obtained with $\chi=10$ ($\beta = 0.1$). In this case, 
PRS achieves a better rate according to the theoretical 
study in Proposition~\ref{prop:comp} (brown region in 
Figure~\ref{fig:compthrate}) and our numerical experiments 
confirm this result. The third (fourth) row displays the results 
obtained with $\chi=0.04$ (resp. $\chi=0.001$) and, thus, 
$\beta = 25$ (resp. $\beta = 1000$), associated with the orange (resp. 
pink) 
efficiency 
region where DRS (resp. FBS) leads to {the} 
best theoretical rate. {In this context where the strong convexity 
constant is small, the fit between theoretical and numerical 
convergence behaviour is not as tight.} The best 
restoration results are obtained when $\chi=10$, in which case PRS 
performs better.
\small
\bibliographystyle{acm}
\bibliography{references}
%
%\end{document}
{\section{Cocoercivity and strong monotonicity in 
Example~\ref{ex:coco1}}
\label{sec:cocomon}
Consider the operators $\mathcal{A}$ and $\mathcal{B}$
defined in \eqref{e:maxmonops},
and fix
$(x_1,u_1)$ and $(x_2,u_2)$ in $\RR^n\times\RR^p$. 
For every $(y_1,v_1)\in\mathcal{A}(x_1,u_1)$ and 
$(y_2,v_2)\in\mathcal{A}(x_2,u_2)$, since 
$h^*$ is $1/L$-strongly 
convex \cite[Theorem~18.15]{bauschke2011:Convex_analysis},
we have
\begin{align}
\scal{(y_1,v_1)-(y_2,v_2)}{(x_1,u_1)-(x_2,u_2)}&=
\scal{\nabla f(x_1)-\nabla 
f(x_2)}{x_1-x_2}-\eta\|x_1-x_2\|^2\nonumber\\
&\qquad +\scal{v_1+\eta u_1-(v_2+\eta 
u_2)}{u_1-u_2}-\eta\|u_1-u_2\|^2\nonumber\\
&\ge (\mu-\eta)\|x_1-x_2\|^2+(1/L-\eta)\|u_1-u_2\|^2\nonumber\\
&\ge(\min\{\mu,1/L\}-\eta)\|(x_1,u_1)-(x_2-u_2)\|^2,
\end{align}
which implies the $(\min\{\mu,1/L\}-\eta)$-strong convexity of 
$\mathcal{A}$.
Moreover, in the case when $h$ is also $\rho$-strongly convex, 
we have $h^*\in\mathscr{C}_{1/\rho}^{1,1}(\RR^p)$, 
$\mathcal{A}\colon (x,u)\mapsto(\nabla 
f({x})-\eta x,\nabla h^*({u})-\eta u)$ is single valued and
\begin{align}
\label{e:aux}
\scal{\mathcal{A}(x_1,u_1)-\mathcal{A}(x_2,u_2)}
{(x_1,u_1)-(x_2,u_2)}&=
\scal{\nabla f(x_1)-\nabla 
f(x_2)}{x_1-x_2}-\eta\|x_1-x_2\|^2\nonumber\\
&\qquad +\scal{\nabla h^*(u_1)-\nabla 
h^*(u_2)}{u_1-u_2}-\eta\|u_1-u_2\|^2\nonumber\\
&\ge\frac{1}{\|A\|^2}\|\nabla f(x_1)-\nabla 
f(x_2)\|^2+\rho\|\nabla h^*(u_1)-\nabla 
h^*(u_2)\|^2.
\end{align}
On the other hand, the strong monotonicity of $\nabla f$
and $\nabla h^*$ imply $\|\nabla f(x_1)-\nabla f(x_2)\|\ge 
\mu\|x_1-x_2\|$ and $\|\nabla h^*(u_1)-\nabla h^*(u_2)\|\ge 
(1/L)\|u_1-u_2\|$, which yield
\begin{align}
\|\mathcal{A}(x_1,u_1)-\mathcal{A}(x_2,u_2)\|&\le 
\|\nabla f(x_1)-\nabla f(x_2)\|+\eta\|x_1-x_2\|+\|\nabla 
h^*(u_1)-\nabla 
h^*(u_2)\|+\eta\|u_1-u_2\|\nonumber\\
&\le (1+\eta/\mu)\|\nabla f(x_1)-\nabla f(x_2)\|+(1+\eta L)\|\nabla 
h^*(u_1)-\nabla h^*(u_2)\|.
\end{align}
Therefore, it follows from \eqref{e:aux} that
$\mathcal{A}$ is 
$(\min\{\rho,\|A\|^{-2}\}/(1+\eta\max\{(1/\mu,L\})^2)$-cocoercive.
Finally,
\begin{align}
\scal{\mathcal{B}(x_1,u_1)-\mathcal{B}(x_2,u_2)}
{(x_1,u_1)-(x_2,u_2)}&=
\eta\|x_1-x_2\|^2+
\scal{D^{\top}(x_1-x_2)}{x_1-x_2}\nonumber\\
&\qquad+\eta\|u_1-u_2\|^2-\scal{D(x_1-x_2)}{u_1-u_2}\nonumber\\
&= \eta\|(x_1,u_1)-(x_2-u_2)\|^2\nonumber\\
&\ge 
\frac{\eta}{\|\mathcal{B}\|^2}\|\mathcal{B}(x_1,u_1)-
\mathcal{B}(x_2-u_2)\|^2,
\end{align}
which implies the $\eta/\|\mathcal{B}\|^2$-cocoercivity of 
$\mathcal{B}$.}

\section{Averaged nonexpansive constants in the case 
$\rho=0$}
\begin{proposition}
	\label{p:2}
Let $\tau>0$. In the context of Problem~\ref{prob:mainsplit}, the 
following hold:
\begin{enumerate}
\item 
\label{p:2i}
Suppose that 
$\tau\in\left]0,2\beta\alpha/(\beta+\alpha)\right[$. Then
$G_{\tau(\mathcal{A}+\mathcal{B})}$ is $\mu_G(\tau)$-averaged 
nonexpansive, 
where
\begin{equation}
\label{e:constantG2}
\mu_G(\tau):=\frac{\tau(\beta+\alpha)}
{2\beta\alpha}\in\left]0,1\right[.
\end{equation}
\item 
\label{p:2ii}
Suppose that $\tau\in\left]0,2\alpha\right[$. Then
$T_{\tau\mathcal{B},\tau\mathcal{A}}$ is 
$\mu_T(\tau)$-averaged nonexpansive, 
where
\begin{equation}
\label{e:muT}
\mu_T(\tau):=\frac{2\tau(\beta+\alpha)}{4\beta\alpha+\tau(4\alpha-\tau)}
\in\left]0,1\right[.
\end{equation}
\item
\label{p:2iii}
$R_{\tau\mathcal{B},\tau\mathcal{A}}$  is 
$\mu_R(\tau)$-averaged nonexpansive, 
where
\begin{equation}
\mu_R(\tau):=\dfrac{\tau}{\frac{\alpha\beta}
	{\alpha+\beta}+\tau}\in\left]0,1\right[.
\end{equation}
\item
\label{p:2iv}
$S_{\tau\mathcal{B},\tau\mathcal{A}}$   is 
$\mu_S(\tau)=\frac{\mu_R(\tau)}{2}$-averaged nonexpansive.
\end{enumerate}	
\begin{proof}
\ref{p:2i}: It follows from 
\cite[Proposition~4.12]{bauschke2011:Convex_analysis} that
$\mathcal{A}+\mathcal{B}$ is 
$(\beta^{-1}+\alpha^{-1})^{-1}=\alpha\beta/(\alpha+\beta)$-cocoercive.
 
The result thus follows from
\cite[Proposition~4.39]{bauschke2011:Convex_analysis}.

\ref{p:2ii}: Since $\tau\mathcal{B}$ is $\beta/\tau$-cocoercive, it 
follows from \cite[Proposition~5.2]{Giselsson2017} and
\cite[Proposition~4.39]{bauschke2011:Convex_analysis} that 
$J_{\tau\mathcal{B}}$ and $G_{\tau\mathcal{A}}$ are 
$\alpha_{\mathcal{B}}=\tau/(2(\tau+\beta))\in\left]0,1/2\right[$ and 
$\alpha_{\mathcal{A}}=\tau/(2\alpha)$-averaged nonexpansive, 
respectively.
Hence, we deduce from \cite[Proposition~2.4]{Yamada} that 
$T_{\tau\mathcal{B},\tau\mathcal{A}}=J_{\tau\mathcal{B}}\circ 
G_{\tau\mathcal{A}}$ is averaged with constant 
$(\alpha_{\mathcal{B}}+
	\alpha_{\mathcal{A}}-2\alpha_{\mathcal{B}}
	\alpha_{\mathcal{A}})/(1-\alpha_{\mathcal{B}}
	\alpha_{\mathcal{A}})$ which leads the result after simple 
	computations.

\ref{p:2iii}: Since $\tau\mathcal{A}$ and $\tau\mathcal{B}$ are 
$\alpha/\tau$- and $\beta/\tau$-cocoercive, respectively, it follows 
from \cite[Proposition~5.3]{Giselsson2017} that  $R_{\tau 
\mathcal{A}}=2J_{\tau 
\mathcal{A}}-\Id$ and $R_{\tau \mathcal{B}}=2J_{\tau 
\mathcal{B}}-\Id$ are $\tau/(\tau+\alpha)$ and $\tau/(\tau+\beta)$ 
averaged nonexpansive, respectively. Hence, since
 $R_{\tau \mathcal{B},\tau \mathcal{A}}=R_{\tau \mathcal{B}}\circ 
 R_{\tau \mathcal{A}}$, the averaging constant is obtained from
\cite[Proposition~2.4]{Yamada} as in \ref{p:2ii}.

\ref{p:2iv}: Since $S_{\tau,\mathcal{B},\mathcal{A}}=(\Id+R_{\tau 
\mathcal{B}}\circ 
R_{\tau \mathcal{A}})/2$ we deduce the result from 
\ref{p:2iii} and 
\cite[Proposition~4.40]{bauschke2011:Convex_analysis}.
\end{proof}

\end{proposition}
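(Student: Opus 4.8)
The plan is to reduce all four claims to the calculus of averaged operators, invoking only the cocoercivity hypotheses $\mathcal{A}\in\mathcal{C}_{\alpha}$ and $\mathcal{B}\in\mathcal{C}_{\beta}$ and deliberately discarding the strong monotonicity of Problem~\ref{prob:mainsplit} (which is why these weaker conclusions persist in the regime $\rho=0$). The three ingredients I would assemble beforehand are: (a) if $\mathcal{M}$ is $\eta$-cocoercive and $\tau\in\left]0,2\eta\right[$, then $\Id-\tau\mathcal{M}$ is $\tau/(2\eta)$-averaged (cf.\ \cite[Proposition~4.39]{bauschke2011:Convex_analysis}); (b) if $\mathcal{M}$ is $\eta$-cocoercive then its resolvent $J_{\mathcal{M}}$ is $1/(2(1+\eta))$-averaged and its reflection $2J_{\mathcal{M}}-\Id$ is $1/(1+\eta)$-averaged (cf.\ \cite[Propositions~5.2--5.3]{Giselsson2017}); and (c) the composition rule sending two averaged operators with constants $\alpha_1,\alpha_2$ to an averaged operator with constant $(\alpha_1+\alpha_2-2\alpha_1\alpha_2)/(1-\alpha_1\alpha_2)$ (cf.\ \cite[Proposition~2.4]{Yamada}), together with the relaxation rule of \cite[Proposition~4.40]{bauschke2011:Convex_analysis}.

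For \ref{p:2i} I would first use \cite[Proposition~4.12]{bauschke2011:Convex_analysis} to see that $\mathcal{A}+\mathcal{B}$ is $(\alpha^{-1}+\beta^{-1})^{-1}=\alpha\beta/(\alpha+\beta)$-cocoercive, and then apply ingredient (a) with $\eta=\alpha\beta/(\alpha+\beta)$. The resulting constant is exactly $\tau(\alpha+\beta)/(2\alpha\beta)=\mu_G(\tau)$, and the stated range $\tau<2\alpha\beta/(\alpha+\beta)$ is precisely what keeps it in $\left]0,1\right[$. For \ref{p:2ii} I would exploit the factorization $T_{\tau\mathcal{B},\tau\mathcal{A}}=J_{\tau\mathcal{B}}\circ G_{\tau\mathcal{A}}$: since $\tau\mathcal{B}$ is $(\beta/\tau)$-cocoercive, ingredient (b) gives that $J_{\tau\mathcal{B}}$ is $\alpha_{\mathcal{B}}=\tau/(2(\tau+\beta))$-averaged, while ingredient (a) applied to $\tau\mathcal{A}$ gives that $G_{\tau\mathcal{A}}$ is $\alpha_{\mathcal{A}}=\tau/(2\alpha)$-averaged; feeding these into the composition formula and simplifying the double fraction produces $\mu_T(\tau)$.

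For \ref{p:2iii} the analogous factorization is $R_{\tau\mathcal{B},\tau\mathcal{A}}=(2J_{\tau\mathcal{B}}-\Id)\circ(2J_{\tau\mathcal{A}}-\Id)$; ingredient (b) gives the two reflections as $\tau/(\tau+\beta)$- and $\tau/(\tau+\alpha)$-averaged, and the same composition formula collapses to $\tau(\alpha+\beta)/(\tau(\alpha+\beta)+\alpha\beta)=\mu_R(\tau)$. Finally, \ref{p:2iv} is immediate: since $S_{\tau\mathcal{B},\tau\mathcal{A}}=(\Id+R_{\tau\mathcal{B},\tau\mathcal{A}})/2$ is the $1/2$-relaxation of $R_{\tau\mathcal{B},\tau\mathcal{A}}$, the relaxation rule halves the averaging constant to $\mu_R(\tau)/2=\mu_S(\tau)$.

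The hard part will not be conceptual but algebraic and bookkeeping. First I must be careful to use the cocoercivity-sharpened averaging constants of ingredient (b) rather than the crude firm-nonexpansiveness value $1/2$, since the entire $\beta$-dependence in $\mu_T$ and $\mu_R$ comes from that refinement. Then I must carry the two-parameter algebra in the composition formula of ingredient (c) through to the stated closed forms without sign slips, and verify at each step that the prescribed range of $\tau$ keeps every constant strictly below $1$: for instance, $\mu_T(\tau)<1$ reduces (after clearing denominators and factoring a parabola with roots $-2\beta$ and $2\alpha$) to $\tau<2\alpha$, matching the hypothesis, whereas $\mu_R(\tau)<1$ and $\mu_S(\tau)<1$ hold for every $\tau>0$ because $\alpha\beta>0$ makes the denominator strictly larger than the numerator.
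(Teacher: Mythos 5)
Your proposal is correct and follows essentially the same route as the paper: cocoercivity of the sum via \cite[Proposition~4.12]{bauschke2011:Convex_analysis} plus \cite[Proposition~4.39]{bauschke2011:Convex_analysis} for \ref{p:2i}, the sharpened averaging constants of \cite[Propositions~5.2--5.3]{Giselsson2017} fed into the composition rule of \cite[Proposition~2.4]{Yamada} for \ref{p:2ii}--\ref{p:2iii}, and the relaxation rule for \ref{p:2iv}. The algebraic simplifications you outline (and the range checks on $\tau$) all come out exactly to the stated constants.
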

%\appendices
\section{Proof of Proposition~\ref{p:stronglysplit1}}
\label{app:stronglysplit1}

\ref{p:stronglysplit1i}: Set 
$\mathcal{M}=\mathcal{A}+\mathcal{B}$,
fix 
$\tau\in\left]0,2\beta\alpha/(\beta+\alpha)\right[\subset\left]0,
2\min\{\beta,\alpha\}\right[$, fix
$x$ and $y$ in $\HH$. From the $\rho$-strong monotonicity and 
$\alpha$-cocoercivity of $\mathcal{A}$, we have, for every 
$\lambda\in\left]0,1\right[$,
\small
\begin{align}
\label{eq:proofp7a}
\scal{\mathcal{M}x-\!\mathcal{M}y}{x-y}\!=\!
\scal{\mathcal{B}x-\mathcal{B}y\!}{\!x-y}\!+\!
\scal{\mathcal{A}x-\!\mathcal{A}y}{\!x-y}
\ge \beta\|\mathcal{B}x-\mathcal{B}y\|^2+\lambda\alpha
\|\mathcal{A}x-\mathcal{A}y\|^2+(1-\lambda)\rho\|x-y\|^2\nonumber.
\end{align}
Hence, noting that, for every $\varepsilon>0$,  
$\|\mathcal{M}x-\mathcal{M}y\|^2\le(1+\varepsilon)
\|\mathcal{B}x-\mathcal{B}y\|^2+
(1+\varepsilon^{-1})\|\mathcal{A}x-\mathcal{A}y\|^2,$
we deduce
\begin{align*}
\|G_{\tau\mathcal{M}}x-G_{\tau\mathcal{M}}y\|^2
&=\|x-y\|^2-2\tau
\scal{\mathcal{M}x-\mathcal{M}y}{x-y}+
\tau^2\|\mathcal{M}x-\mathcal{M}y\|^2\nonumber\\
&\le\|x-y\|^2-2\tau\beta\|\mathcal{B}x-\mathcal{B}y\|^2-
2\tau\lambda\alpha\|\mathcal{A}x-\mathcal{A}y\|^2\nonumber\\
&\qquad  -
2\tau\rho(1-\lambda)\|x-y\|^2+
\tau^2\|\mathcal{M}x-\mathcal{M}y\|^2\nonumber\\
&\le (1-2\tau\rho 
(1-\lambda))\|x-y\|^2 -\tau(2\beta-\tau(1+\varepsilon))
\|\mathcal{B}x-\mathcal{B}y\|^2\nonumber\\&\qquad-
\tau(2\lambda\alpha-\tau(1+\varepsilon^{-1}))
\|\mathcal{A}x-\mathcal{A}y\|^2.
\end{align*}
Thus, the result follows by setting $\varepsilon=(2\beta-\tau)/\tau>0$
and $\lambda=\frac{\tau\beta}{\alpha(2\beta-\tau)}\in\left]0,1\right[$.

\ref{p:stronglysplit1ii}: Fix $\tau\in\left]0,2\alpha\right[$. It follows 
from \ref{p:stronglysplit1i} in the limit case when 
$\mathcal{B}=0$ ($\beta\to+\infty$) that 
$G_{\tau\mathcal{A}}$ is 
$\omega_{T_1}(\tau)$-Lipschitz continuous (see also 
\cite[Fact~7]{Yin20}). 
Hence, the result follows from
$T_{\tau\mathcal{B},\tau\mathcal{A}}=J_{\tau\mathcal{B}}\circ 
G_{\tau\mathcal{A}}$ and the nonexpansivity of $J_{\tau\mathcal{B}}$.

\ref{p:stronglysplit1iiv}: Fix $\tau\in\left]0,2\beta\right]$. 
It follows 
from \cite[Proposition~23.13]{bauschke2011:Convex_analysis} that 
$J_{\tau\mathcal{A}}$ is 
$\omega_{T_2}(\tau)$-Lipschitz continuous. The result follows from
$T_{\tau\mathcal{A},\tau\mathcal{B}}=J_{\tau\mathcal{A}}\circ 
G_{\tau\mathcal{B}}$ and the nonexpansivity of 
$G_{\tau\mathcal{B}}$.

\ref{p:stronglysplit1iii}: First note that 
\cite[Theorem~7.2]{Giselsson2017} implies that $R_{\tau 
\mathcal{A}}=2J_{\tau\mathcal{A}}-\Id$ is $\omega_R(\tau)$-Lipschitz 
continuous.
Now, since $R_{\tau\mathcal{B}}$ is nonexpansive, we obtain that
$R_{\tau\mathcal{B}}
R_{\tau\mathcal{A}}$ and $R_{\tau\mathcal{A}}
R_{\tau\mathcal{B}}$ are also $\omega_R(\tau)$-Lipschitz continuous. 

\ref{p:stronglysplit1iv}:
Since
 $S_{\tau\mathcal{B},\tau\mathcal{A}}=
(\Id+R_{\tau\mathcal{B},\tau\mathcal{A}})/2$
  and  
$S_{\tau\mathcal{A},\tau\mathcal{B}}=(\Id+R_{\tau\mathcal{A},\tau\mathcal{B}})/2$,
 this result is a consequence of 
\cite[Lemma~3.3\,\&\,Theorem~5.6]{Giselsson2017}, and 
\ref{p:stronglysplit1iii}. 

In all the cases, the minima are obtained via 
simple computations.

\section{Proof of Proposition~\ref{p:stronglysplit1opt}}
\label{app:stronglysplit1opt}
\ref{p:stronglysplit1opti}: Set $h=f+g$.
Since $g$ is convex and Fr\'echet differentiable and 
$f\in\mathscr{C}_{1/\beta}^{1,1}(\HH)$ is $\rho$-strongly convex, we 
obtain that
$\phi=h-\rho\|\cdot\|^2/2$ is convex and Fréchet differentiable. 
Moreover, since $\nabla f$ and 
$\nabla g$ are $\alpha^{-1}$-Lipschitz continuous and 
$\beta^{-1}$-Lipschitz continuous, we have that $\nabla h=\nabla 
f+\nabla g$ is $\gamma^{-1}$-Lipschitz continuous, where
$\gamma^{-1}=\alpha^{-1}+\beta^{-1}$, and thus 
$h\in\mathscr{C}_{1/\gamma}^{1,1}(\HH)$ and it is convex. Hence, 
since $\gamma^{-1}=\alpha^{-1}+\beta^{-1}> \rho+\beta^{-1}\ge \rho$, 
it 
follows from 
Proposition~\ref{p:BH} that, for every $x$ and $y$ in $\HH$,
\begin{align}
\scal{x\!-\!y\!}{\!\nabla\phi(x)\!-\!\nabla\phi(y)}\!
&=\!\scal{x\!-\!y\!}{\!\nabla h(x)\!-\!\nabla 
h(y)}\!-\!\rho\|x-y\|^2\le (\gamma^{-1}-\rho)\|x-y\|^2,\nonumber
\end{align} 
which yields $\phi\in\mathscr{C}_{\gamma^{-1}-\rho}^{1,1}(\HH)$
in view of Proposition~\ref{p:BH}. In addition, we have
\begin{equation}
\label{e:gradphi}
G_{\tau\nabla h}
=\Id-\tau(\nabla\phi+\rho\Id)
=(1-\tau\rho)\Id-\tau\nabla\phi.
\end{equation}
Now let 
 $\tau\in\left]0,2\beta\alpha/(\beta+\alpha)\right[=\left]0,2\gamma\right[$
  and denote 
$p=G_{\tau\nabla h}\,x$ and $q=G_{\tau \nabla h}\,y$. Since
$\phi\in\mathscr{C}_{\gamma^{-1}-\rho}^{1,1}(\HH)$ and it is convex, it 
follows from 
\eqref{e:gradphi}, Proposition~\ref{p:BH}, and
$\nabla\phi\in\mathcal{C}_{(\gamma^{-1}-\rho)^{-1}}$ that
\begin{align}
\|p-q\|^2&=(1-\tau\rho)^2\|x-y\|^2 
+\tau^2\|\nabla\phi(x)-\nabla\phi(y)\|^2
-2\tau(1-\tau\rho)\scal{x-y}{\nabla\phi(x)-\nabla\phi(y)}\nonumber\\
&\le(1-\tau\rho)^2\|x-y\|^2
+\tau\big(\tau(\gamma^{-1}+\rho)
-2\big)\scal{x-y}{\nabla\phi(x)-\nabla\phi(y)}\nonumber\\
&\le(1-\tau\rho)^2\|x-y\|^2
+\tau\max\{0,\tau(\gamma^{-1}+\rho)
-2\}(\gamma^{-1}-\rho)\|x-y\|^2\nonumber\\
&=
\|x-y\|^2\max\{(1-\tau\rho)^2,(1-\tau\gamma^{-1})^2\}\nonumber
\end{align}
and we obtain \eqref{e:constantG2strongopt}. 

\ref{p:stronglysplit1optii}: Let $\tau\in\left]0,2\alpha\right[$. It follows 
from \ref{p:stronglysplit1opti} that,
in the case when $g=0$ ($\beta^{-1}=0$), $G_{\tau\nabla f}$ is 
$r_{T_1}(\tau)$-Lipschitz continuous, where $r_{T_1}(\tau)$ is defined 
in 
\eqref{e:constgradreduced}. The result follows from
$T_{\tau\nabla g,\tau\nabla f}
=\prox_{\tau g}\circ G_{\tau\nabla f}$
and the nonexpansivity of $\prox_{\tau g}$.

\ref{p:stronglysplit1optiiv}: 
Let $\tau\in\left]0,2\beta\right]$. 
 We deduce from 
Proposition~\ref{p:3}\eqref{p:3i} that $J_{\tau\nabla f}=\prox_{\tau f}$
is $r_{T_2}(\tau)$-Lipschitz continuous, where 
$r_{T_2}(\tau)$ is defined in \eqref{e:consFB2}.
The result follows from $T_{\tau\nabla f,\tau\nabla g}
=\prox_{\tau f}\circ G_{\tau\nabla g}$ and the nonexpansivity 
of $G_{\tau\nabla g}$. 

\ref{p:stronglysplit1optiii}: See \cite[Theorem~2]{BoydGiss}.
\ref{p:stronglysplit1optiv}: It is a consequence of
\cite[Theorem~2]{BoydGiss} and \cite[Theorem~5.6]{Giselsson2017}
in the particular case when $\mathcal{A}=\nabla f$ and 
$\mathcal{B}=\nabla g$.

In all the cases, the minimum is obtained via simple 
computations.
\end{document}